\documentclass[11pt,a4paper,reqno]{amsart}

\usepackage[a4paper,lmargin=2.5cm,rmargin=2.5cm,tmargin=3cm,bmargin=3cm]{geometry}

\usepackage[english]{babel}

\usepackage{amsmath,amssymb,amsthm}
\usepackage{hyperref}

\usepackage{dot2texi}
\usepackage{tikz}
\usepackage{float}
\usepackage{pgfplots}
\usepackage{csquotes}
\usetikzlibrary{calc,arrows, arrows.meta}

\usepackage{centernot}

\newtheorem*{theorem*A}{Theorem A}
\newtheorem*{theorem*B}{Theorem B}
\newtheorem*{theorem*C}{Theorem C}
\newtheorem*{theorem*D}{Theorem D}
\newtheorem{theorem}{Theorem}[section]
\newtheorem{lemma}[theorem]{Lemma}

\newtheorem{proposition}[theorem]{Proposition}
\newtheorem{corollary}[theorem]{Corollary}
\theoremstyle{definition}
\newtheorem{definition}[theorem]{Definition}
\newtheorem{example}[theorem]{Example}

\newtheorem{question}[theorem]{Question}
\theoremstyle{remark}
\newtheorem{remark}[theorem]{Remark}
\numberwithin{equation}{section}

\newcommand{\vertiii}[1]{{\left\vert\kern-0.25ex\left\vert\kern-0.25ex\left\vert #1 
    \right\vert\kern-0.25ex\right\vert\kern-0.25ex\right\vert}}

\def\fnote#1{\footnote}

\def\ignora#1{}
\def\n3#1{\left\vert  \! \left\vert \! \left\vert \, #1 \, \right\vert \!
	\right\vert \! \right\vert }

\DeclareMathOperator{\dist}{dist\,}

\DeclareMathOperator{\conv}{conv}

\DeclareMathOperator{\bc}{bc}
\newcommand{\cconv}{\overline{\conv}}

\renewcommand{\geq}{\geqslant}
\renewcommand{\leq}{\leqslant}

\newcommand{\Free}{{\mathcal F}}

\newcommand{\Lip}{{\mathrm{Lip}}_0}

\newcommand{\SE}{\operatorname{StrExp}}

\newcommand{\aconv}{\overline{\mathop\mathrm{aconv}}}

\newcommand{\Mol}[1]{\operatorname{Mol}\left(#1\right)}

\newcommand{\pten}{\ensuremath{\widehat{\otimes}_\pi}}

\begin{document}
	
	\title{ Dual Banach spaces with the ball-covering property }

    \author[Langemets]{Johann Langemets}
\address[Langemets]{Institute of Mathematics and Statistics, University of Tartu, Narva mnt 18, 51009 Tartu, Estonia}
\email{johann.langemets@ut.ee}
\urladdr{\url{https://www.johannlangemets.com/}}
\urladdr{
\href{https://orcid.org/0000-0001-9649-7282}{ORCID: \texttt{0000-0001-9649-7282} } }

\author[Mõttus]{Emma Mõttus}
\address[Mõttus]{Institute of Mathematics and Statistics, University of Tartu, Narva mnt 18, 51009 Tartu, Estonia}
\email{emma.mottus@ut.ee}

\author[Saealle]{Natalia Saealle}
\address[Saealle]{Institute of Mathematics and Statistics, University of Tartu, Narva mnt 18, 51009 Tartu, Estonia}
\email{natalia.saealle@ut.ee}
\urladdr{\href{https://orcid.org/0009-0008-1511-228X}{ORCID: \texttt{0009-0008-1511-228X}}}


	\thanks{This work was supported by the Estonian Research Council grant (PRG2545).}

	\subjclass[2020]{Primary 46B20; Secondary 46B22, 46B28, 46B80}
	\keywords{Ball-covering property, slicely countably determined set, uniformly strongly exposed set}

	\begin{abstract}
		We study ball-covering properties of dual Banach spaces and their connections with the geometry of predual unit balls. One of our main results shows that, for every separable Banach space \(X\), the unit ball \(B_X\) is a slicely countably determined set if and only if \(\bc(X^*)=1\), where $\bc(\cdot)$ is the ball-covering index introduced by A.~J.~Guirao, A.~Lissitsin, and V.~Montesinos. We obtain several sufficient conditions for the uniform ball-covering property in dual spaces, including duals of spaces with a \(K\)-unconditional basis for \(K<2\), and duals of separable spaces whose unit ball is the closed convex hull of a set of uniformly strongly exposed points. The constant \(2\) is sharp: there is a space with a \(2\)-unconditional basis whose dual fails the ball-covering property. Applications are given to spaces of operators and to Lipschitz spaces. In particular, \(\mathcal L(L_p[0,1])\) has the uniform ball-covering property for every \(1<p<\infty\), which answers a question posed by Q.~Bao, R.~Liu, and J.~Shen. As an application to Lipschitz spaces, we prove that $\Lip(M)$ has the uniform ball-covering property whenever $M$ is a separable complete ultrametric or Hölder metric space.
	\end{abstract}
	
	\maketitle
	
	\section{Introduction}\label{sec:introduction}

A Banach space \(X\) is said to have the ball-covering property
(BCP) if its unit sphere can be covered by countably many open balls
that do not contain the origin. Equivalently, there are points
\(x_n\in X\) and positive numbers \(r_n\) such that
\begin{equation}\label{eq: BCP def}
        S_X\subset \bigcup_{n=1}^{\infty} B(x_n,r_n)
        \qquad\text{and}\qquad
        \|x_n\|\ge r_n \quad(n\in\mathbb N).
\end{equation}
This covering property was introduced by L.~Cheng in \cite{Cheng2006}. Several quantitative refinements of the BCP are now available. Z.~Luo and
B.~Zheng introduced in \cite{LuoZheng2021} the \emph{strong ball-covering property} (SBCP),
where the radii of the covering balls are required to be uniformly
bounded, and the \emph{uniform ball-covering property} (UBCP), where the covering
balls are required, in addition, to stay away from a fixed positive
multiple of the unit ball.

From the definitions, it is straightforward to see that every separable Banach space has the UBCP. Note that $\ell_\infty$ is a prototypical example of a nonseparable Banach space with the UBCP \cite{Cheng2006}. It is known that Banach spaces with the BCP have weak$^*$-separable duals \cite{Cheng2006}. Moreover, V.~Fonf and C.~Zanco proved in \cite{FZ2009} that $X$ has a weak$^*$-separable dual if and only if for every $\varepsilon>0$ there is a $(1+\varepsilon)$-equivalent norm $|\cdot|$ on $X$ so that $(X,|\cdot|)$ has the SBCP.

It was shown in \cite{CCL2008} that the BCP is not preserved under linear isomorphisms by equivalently renorming $\ell_\infty$. An easier example is to consider the isomorphic spaces $\ell_\infty$ and $L_\infty[0,1]$ \cite{LZ2020}. In \cite{LLLZ2022}, the authors proved that the BCP does not pass even to 1-complemented subspaces. The BCP is closely related to several important properties, such as the Radon--Nikod\'ym property, locally uniform rotund spaces, smoothness, and countable $\pi$-bases; see, for instance,
\cite{Cheng2011,CCL2008, CKWZ2010, GLM2019, HKL2024,LLLZ2022,Sain2025}.

Let $\alpha\in[-1,1)$. Another quantitative approach, due to A.~Guirao, A.~Lissitsin and V.~Montesinos
\cite{GLM2019}, is the \emph{strictly \(\alpha\)-off ball-covering property}
(\(\alpha\)-BCP), where the norms of the centers of the balls in \eqref{eq: BCP def} are required to satisfy \(\|x_n\|\ge r_n+\alpha\). The associated index
\[
        \bc(X):=\sup\{\alpha\in[-1,1): X\text{ has the }\alpha\text{-BCP}\}
\]
measures how far the covering balls may be forced away from the origin.
Thus \(\bc(X)=1\) is the optimal possible form of the BCP in this scale. All of the above-mentioned ball-covering properties are related as depicted in Figure~\ref{fig:relations-between-ball-covering-properties}.

The main purpose of this paper is to study these properties for dual
Banach spaces. First let us note that for a dual space to have the BCP it is necessary that the predual is a separable Banach space \cite[Remark~11.2]{GLM2019}. The guiding question is the following: 
to what extent can
a ball-covering property of \(X^*\) be read from the geometry of the
predual unit ball \(B_X\)? A first important result in this direction was
obtained in \cite{GLM2019}: if \(X\) is separable and \(B_X\) is the closed
convex hull of its strongly exposed points, then \(\bc(X^*)=1\). Our first
main theorem gives the exact geometric condition behind this phenomenon.

Recall that the notion of \emph{slicely countably determined} (SCD) sets (see Definition~\ref{def: SCD set}) was
introduced by A.~Avil\'es, V.~Kadets, M.~Martín, J.~Mer\'i, and V.~Shepelska in \cite{AKMMS2010}
and includes, in particular, many classical situations coming from the
Radon--Nikod\'ym property, Asplund spaces, and spaces with 1-unconditional bases.

Our first main result (see Theorem~\ref{thm: BCP in dual}) characterizes the optimal ball-covering index in dual spaces.
\begin{theorem*A}
     Let $X$ be a separable Banach space. Then $B_X$ is an SCD set if and only if $\bc(X^*)=1$.
\end{theorem*A}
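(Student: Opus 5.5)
The plan is to reduce both directions to a dual reformulation of SCD. For a bounded convex set the following holds (see \cite{AKMMS2010}): a sequence of slices $(S_n)$ of $B_X$ is \emph{determining}, i.e.\ $B_X\subset\cconv\{x_n\}$ for every choice $x_n\in S_n$, if and only if every slice of $B_X$ contains some $S_n$.

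Since the slices of $B_X$ are exactly the sets $\{x\in B_X: g(x)>1-\varepsilon\}$ with $g\in S_{X^*}$ and $\varepsilon>0$, the condition reads as follows.
\begin{enumerate}
\item[$(\ast)$] For every $g\in S_{X^*}$ and every $\varepsilon>0$ there is $n$ with $\inf_{x\in S_n} g(x)\ge 1-\varepsilon$.
\end{enumerate}
Thus $B_X$ is SCD iff there is a countable family of slices $S(B_X,h,\delta)=\{x\in B_X: h(x)>1-\delta\}$, $h\in S_{X^*}$, satisfying $(\ast)$. If only the version of $(\ast)$ with strict inequality and a fixed $\varepsilon$ is available, we intersect over rational $\varepsilon$ and take the union of the countable families.

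\emph{SCD $\Rightarrow$ $\bc(X^*)=1$.} Fix $\varepsilon>0$ and a determining family $S_n=S(B_X,h_n,\delta_n)$, and choose $\lambda_n\ge 2/\delta_n$. Consider the ball $B(\lambda_n h_n,\ \lambda_n-(1-2\varepsilon))$; its centre has norm $\lambda_n$, so it is strictly $(1-2\varepsilon)$-off. To show these balls cover $S_{X^*}$, take $g\in S_{X^*}$ and $n$ as in $(\ast)$, and estimate
\[
\|\lambda_n h_n-g\|=\sup_{x\in B_X}\bigl(\lambda_n h_n(x)-g(x)\bigr)
\]
by splitting into two cases.
\begin{enumerate}
\item[(a)] If $x\in S_n$, then $g(x)\ge 1-\varepsilon$, so the expression is at most $\lambda_n-(1-\varepsilon)$.
\item[(b)] If $x\notin S_n$, then $h_n(x)\le 1-\delta_n$, so the expression is at most $\lambda_n-\lambda_n\delta_n+1\le\lambda_n-1$.
\end{enumerate}
Hence $\|\lambda_n h_n-g\|\le\lambda_n-(1-\varepsilon)<\lambda_n-(1-2\varepsilon)$, and the balls cover. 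Since $\varepsilon$ is arbitrary, $\bc(X^*)=1$.

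\emph{$\bc(X^*)=1$ $\Rightarrow$ SCD.} For each $k$ take a covering of $S_{X^*}$ by balls $B(f_{k,n},r_{k,n})$ with $\|f_{k,n}\|\ge r_{k,n}+1-1/k$. Define the slice
\[
S_{k,n}=\{x\in B_X: f_{k,n}(x)>\|f_{k,n}\|-1/k\},
\]
where we normalise $f_{k,n}$ in the usual way. If $g\in S_{X^*}$ lies in $B(f_{k,n},r_{k,n})$, then $f_{k,n}(x)-g(x)<r_{k,n}$ for every $x\in B_X$. So for $x\in S_{k,n}$,
\[
g(x)>\|f_{k,n}\|-1/k-r_{k,n}\ge 1-2/k.
\]
Letting $k$ run gives $(\ast)$ for the countable family $\{S_{k,n}\}$, which is therefore determining by the lemma. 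Hence $B_X$ is SCD.

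I expect the main obstacle to be the sufficiency direction. A single slice $\{g: g(x)>1-\varepsilon\}$ chosen through one point $x\in S_n$ need not fit inside an off-centre ball when the norm has flat faces. The resolution is that the determining property holds for \emph{all} selections, which yields the uniform bound $\inf_{S_n}g\ge 1-\varepsilon$ in $(\ast)$. Combined with the large-$\lambda$ ball centred at $\lambda_n h_n$, this bound is exactly what makes the estimate in case (a) work. I would therefore first establish the equivalence of the determining property with $(\ast)$ carefully, via Hahn--Banach separation.
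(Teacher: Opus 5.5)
Your proposal is correct and follows essentially the paper's proof. In both directions you use the same mechanism as the paper. For SCD $\Rightarrow \bc(X^*)=1$, the centres are large multiples $\lambda_n h_n$ of the determining slice functionals; the paper runs the same estimate by contradiction using the selection form of SCD, while you run it directly using the equivalent ``every slice contains some $S_n$'' form. For the converse, you build slices from the centres $f_{k,n}$ of $(1-1/k)$-off coverings, exactly as the paper does (it uses rational thresholds $q$ where you use $\|f_{k,n}\|-1/k$). One trivial point: you should also take $\lambda_n\ge 1$, so that every radius $\lambda_n-(1-2\varepsilon)$ is positive.
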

This result extends the strongly exposed point criterion of A.~Guirao, A.~Lissitsin and V.~Montesinos
\cite{GLM2019} to a full characterization and connects the ball-covering properties to the theory of SCD sets.
Moreover, we prove that if \(X\) is separable and \(X^*\) fails the \((-1)\)-BCP, then there is an equivalent dual norm on \(X^*\) for which the resulting dual space has the BCP, but the corresponding renormed predual unit ball is not an SCD set (see Proposition~\ref{prop:dual-bcp-but-bc-less-than-one}).  

Our second main result is about the UBCP of dual spaces (see Proposition~\ref{prop:k-unconditional-dual-bcp} and Theorem~\ref{thm: UBCP in dual}).
\begin{theorem*B}
    Let $X$ be a separable Banach space. Assume that either of the following conditions holds:
\begin{itemize}
    \item $X$ has a $K$-unconditional basis, where $1\leq K<2$;
    \item \(B_X\) is the closed convex hull of a set of uniformly strongly exposed points.
\end{itemize}
Then \(X^*\) has the UBCP.
\end{theorem*B}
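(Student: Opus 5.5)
We treat the two conditions separately. In both cases the covering balls have uniformly bounded radii and centres whose norm exceeds the radius by a fixed positive margin.

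\emph{The $K$-unconditional case.} Let $(e_n)$ be a $K$-unconditional basis of $X$ with $1\le K<2$, and let $(e_n^*)$ be its biorthogonal functionals. Let $P_N$ be the $N$-th basis projection. For a finite set $A\subset\mathbb N$, let $P_A$ be the coordinate projection onto $\operatorname{span}\{e_n:n\in A\}$. $K$-unconditionality says every sign-change operator has norm at most $K$. Since $I-2P_N$ is such an operator, $\|(I-2P_N)^*\|\le K$.

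Fix $y^*\in S_{X^*}$. Finitely supported vectors are dense in $X$, so $\sup_N\|P_N^*y^*\|\ge \|y^*\|=1$. Given $\varepsilon>0$, choose $N$ with $\|P_N^*y^*\|>1-\varepsilon$. Let $D_N$ be a countable dense subset of the finite-dimensional space $\operatorname{span}\{e_1^*,\dots,e_N^*\}$, and pick $u^*\in D_N$ with $\|P_N^*y^*-u^*\|<\varepsilon$. Writing $y^*=(I-2P_N)^*y^*+2P_N^*y^*$, we obtain
\[
\|y^*-2u^*\|\le \|(I-2P_N)^*y^*\|+2\|P_N^*y^*-u^*\|< K+2\varepsilon .
\]
On the other hand, $\|2u^*\|>2(1-2\varepsilon)$. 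Fix $\varepsilon$ so small that $\delta:=2-K-6\varepsilon>0$. Then the countable family of balls
\[
B(2u^*,K+2\varepsilon),\qquad u^*\in\textstyle\bigcup_N D_N,\ \|u^*\|>1-2\varepsilon,
\]
covers $S_{X^*}$. Each radius is at most $K+2\varepsilon$, and each centre satisfies $\|2u^*\|-(K+2\varepsilon)\ge\delta$. This gives the UBCP. The only delicate point is the inequality $\|(I-2P_N)^*\|\le K$, which is exactly where $K<2$ is used.

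\emph{The uniformly strongly exposed case.} Let $B_X=\cconv(E)$, where the points of $E$ are uniformly strongly exposed. Concretely, each $x\in E$ has an exposing functional $f_x\in S_{X^*}$ with $f_x(x)=1$, and there is a common modulus: for every $\varepsilon'>0$ there is $\eta>0$, independent of $x$, such that $z\in B_X$ and $f_x(z)>1-\eta$ imply $\|z-x\|<\varepsilon'$. By symmetry, $f_x(z)<-1+\eta$ implies $\|z+x\|<\varepsilon'$.

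The key claim is the following. Fix $\varepsilon,\varepsilon'$ small, take $R>2/\eta$, and let $y^*\in S_{X^*}$ satisfy $y^*(x)>1-\varepsilon$. Then
\[
\|y^*-Rf_x\|\le R-\delta,\qquad \delta:=\min\{1-\varepsilon-\varepsilon',\,R\eta-1\}>0 .
\]
To prove it, estimate $\pm\bigl(y^*(z)-Rf_x(z)\bigr)$ over $z\in B_X$ by splitting into three regions.
\begin{itemize}
\item If $f_x(z)>1-\eta$, then $z$ is $\varepsilon'$-close to $x$. Hence $y^*(z)-Rf_x(z)<1-R(1-\eta)\le R-\delta$ (as $R\ge1$ and $\eta$ is small), and $Rf_x(z)-y^*(z)\le R-1+\varepsilon+\varepsilon'$.
\item If $f_x(z)<-1+\eta$, then $z$ is $\varepsilon'$-close to $-x$, and the same two bounds follow by symmetry.
\item If $|f_x(z)|\le 1-\eta$, both expressions are at most $1+R(1-\eta)=R-(R\eta-1)$.
\end{itemize}
The centre $Rf_x$ has norm $R$, so the margin is exactly $\delta$, and the radius $R-\delta$ is bounded uniformly in $x$.

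For countability, use separability of $X$ to pick a countable set $E_0\subset E$ dense in $E$. If $y^*\in S_{X^*}$, then $\sup_{x\in E}|y^*(x)|=1$ because $B_X=\cconv(E)$ is symmetric. Hence some $x\in E_0$ satisfies $y^*(\pm x)>1-\varepsilon$. The countable family $\{B(\pm Rf_x,R-\delta):x\in E_0\}$ therefore covers $S_{X^*}$, which gives the UBCP.

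The main obstacle is the claim's estimate in the region $f_x(z)\approx\pm1$. This is the one place where the uniformity of the strong exposure modulus is essential, since it makes $\eta$, and hence $R$ and $\delta$, independent of $x$.
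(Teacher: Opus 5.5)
Your proposal is correct and follows essentially the paper's route. In the unconditional case you cover $S_{X^*}$ by balls centred at $2u^*$ with $u^*$ from countable dense subsets of $P_N^*X^*$, using $\|I-2P_N\|\le K$ and $K<2$ for the margin (as in Proposition~\ref{prop:k-unconditional-dual-bcp}); in the uniformly strongly exposed case you use centres $\pm Rf_x$, with $x$ in a countable subset of the exposed points and $R$ of order $1/\eta$ (as in Theorem~\ref{thm:nu-norming-UBCP-real}, where a one-sided contradiction argument replaces your three-region estimate). The only cosmetic fix is that you prove $\|y^*-Rf_x\|\le R-\delta$ rather than a strict inequality, so you should either use closed balls or shrink the radius to $R-\delta/2$ (margin $\delta/2$) to land in open balls.
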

Our first source of examples of dual spaces with the UBCP comes from
unconditional bases.
 V.~Kadets, M.~Mart\'in, J.~Mer\'i, and D.~Werner proved that the unit ball is an SCD set for every Banach space with a 1-unconditional basis \cite[Theorem~3.1]{KMMW2013}. Recently, M.~Lõo and Y.~Perreau constructed
a Banach space with a 2-unconditional basis whose unit ball is not SCD
\cite{LP2026}. It is an open question whether the unit ball of every Banach space with a $K$-unconditional basis, where $1<K<2$, is an SCD set \cite{LP2026}. Note that our dual uniform covering result is independent of the SCD problem. Nevertheless, using the space in \cite{LP2026}, we prove that the dual of this space even fails the BCP (see Proposition~\ref{prop:two-unconditional-dual-no-bcp}), hence the constant 2 is optimal.

Our second source of UBCP examples is based on uniformly strongly exposed
sets (see Definition~\ref{def: USE set}). J.~Lindenstrauss introduced this notion in his work on norm-attaining
operators \cite{L1963}, where he showed that if \(B_X\) is the closed
convex hull of a set of uniformly strongly exposed points, then \(X\) has
what is now called Lindenstrauss property A. We prove that the same
geometric assumption implies that \(X^*\) has the UBCP and \(\bc(X^*)=1\) (see Theorem~\ref{thm: UBCP in dual}).
However, Lindenstrauss property A alone does not imply any strong form of bounded
ball-covering in the dual. We construct a separable Banach space \(X\)
with Lindenstrauss property A such that \(\bc(X^*)=1\), but \(X^*\) fails
the SBCP (see Proposition~\ref{prop: property A does not imply SBCP}). 

The abstract dual criteria above have two main applications. The first concerns
spaces of operators. The ball-covering property in spaces of operators has been recently studied in several papers \cite{BLSpreprint, BLS2025,LLLZ2022}. In particular, it has been proven that $\mathcal{L}(c_0)$ and $\mathcal{L}(\ell_p)$, $1\leq p<\infty$, have the UBCP \cite[Corollaries~3.1 and 3.4]{LLLZ2022}, and that $\mathcal{L}(L_p[0,1])$ has the UBCP whenever $3/2<p<3$ \cite[Theorem~3.3]{BLS2025}. In \cite[Question~3]{BLS2025}, it is asked whether $\mathcal{L}(L_p[0,1])$ has the BCP whenever $1<p<\infty$. Our main application is the following theorem (see Theorem~\ref{thm: UBCP in L(X,Y*)}), which answers the aforementioned question positively and provides an alternative way to simultaneously prove that $\mathcal{L}(\ell_p)$ and $\mathcal{L}(L_p[0,1])$ have the UBCP whenever $1<p<\infty$.
\begin{theorem*C}
    Let $X$ and $Y$ be separable Banach spaces such that $B_X$ and $B_Y$ are the closed convex hulls of sets of uniformly strongly exposed points. Then $\mathcal{L}(X,Y^*)$ and $\mathcal{L}(Y,X^*)$ both have the UBCP, and $\bc (\mathcal{L}(X,Y^*))=\bc(\mathcal{L}(Y,X^*))=1$. 
\end{theorem*C}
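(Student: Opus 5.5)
We will reduce Theorem C to the second part of Theorem B (the statement of Theorem~\ref{thm: UBCP in dual}) by means of the projective tensor product. We use the isometric identification $\mathcal{L}(X,Y^*)=(X\pten Y)^*$, where $T\mapsto \bigl(x\otimes y\mapsto (Tx)(y)\bigr)$. Symmetrically, $\mathcal{L}(Y,X^*)=(Y\pten X)^*$, and $Y\pten X$ is isometric to $X\pten Y$. It therefore suffices to show the following for $Z:=X\pten Y$. First, $Z$ is separable. Second, $B_Z$ is the closed convex hull of a set of uniformly strongly exposed points. Given these, the cited theorem yields that $Z^*$ has the UBCP and $\bc(Z^*)=1$, which gives both operator spaces at once.

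Separability is routine: finite sums of elementary tensors of elements from countable dense subsets of $X$ and $Y$ are dense. For the geometry, let $A\subset S_X$ and $C\subset S_Y$ be the uniformly strongly exposed sets with $B_X=\cconv(A)$ and $B_Y=\cconv(C)$. Uniform strong exposure means there are functionals $f_a\in S_{X^*}$ with $f_a(a)=1$ and a modulus $\delta(\varepsilon)>0$, independent of $a$, such that $x\in B_X$ and $f_a(x)>1-\delta(\varepsilon)$ imply $\|x-a\|<\varepsilon$. The analogous functionals $g_c$ exist for $c\in C$. We take the candidate set $A\otimes C=\{a\otimes c\}$ with exposing functionals $f_a\otimes g_c$, viewed as rank-one operators in $\mathcal{L}(X,Y^*)=Z^*$ of norm $1$.

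The equality $B_Z=\cconv(A\otimes C)$ is standard. We have $B_Z=\cconv(B_X\otimes B_Y)$. Moreover $x\otimes y$ with $x\in\conv A$ and $y\in\conv C$ lies in $\conv(A\otimes C)$ by bilinearity, and the bilinear map $\otimes$ is continuous, which lets us pass to closures.

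The main obstacle is uniform strong exposure in $Z$. Suppose $u\in B_Z$ and $\langle f_a\otimes g_c,u\rangle>1-\eta$. We must show $\|u-a\otimes c\|$ is small, uniformly in $a$ and $c$. First write $u$ approximately as $\sum\lambda_i x_i\otimes y_i$ with $\lambda_i\ge0$, $\sum\lambda_i=1$, $x_i\in B_X$ and $y_i\in B_Y$. Then $\sum\lambda_i f_a(x_i)g_c(y_i)>1-\eta$. Apply a Markov-type argument. Let $I$ be the set of indices with $f_a(x_i)g_c(y_i)>1-\sqrt\eta$. Then the weight outside $I$ is at most about $2\sqrt\eta$. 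Note that $|f_a(x_i)|,|g_c(y_i)|\le1$. So for $i\in I$ both $f_a(x_i)$ and $g_c(y_i)$ have the same sign and absolute value above $1-\sqrt\eta$. Replacing $(x_i,y_i)$ by $(-x_i,-y_i)$ where needed, which leaves $x_i\otimes y_i$ unchanged, both values exceed $1-\sqrt\eta$. Hence $\|x_i-a\|<\varepsilon$ and $\|y_i-c\|<\varepsilon$ once $\sqrt\eta<\delta(\varepsilon)$. From this, $\|x_i\otimes y_i-a\otimes c\|\le\|x_i-a\|+\|y_i-c\|<2\varepsilon$. Summing gives $\|u-a\otimes c\|\le 2\varepsilon+4\sqrt\eta$ plus the approximation error. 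This bound is uniform, which gives the tensor modulus $\eta\approx\min\{\delta(\varepsilon)^2,\varepsilon^2\}$.

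The delicate point is handling the sign flip and the approximation of $u$ by finite convex combinations without losing uniformity. We will do this by choosing the representation with error much smaller than $\eta$.
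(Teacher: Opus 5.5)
Your proposal is correct and follows essentially the same route as the paper: you reduce to Theorem~\ref{thm: UBCP in dual} for $X\pten Y$ by showing that $B_{X\pten Y}=\cconv(U\otimes V)$, with $U\otimes V$ uniformly strongly exposed by the functionals $f_u\otimes g_v$, via the same averaging (Markov-type) estimate and the sign flip $x_k\otimes y_k=(-x_k)\otimes(-y_k)$. The only differences are cosmetic: you approximate $u$ by finite convex combinations and use the threshold $1-\sqrt{\eta}$, whereas the paper uses a series representation $\sum c_k x_k\otimes y_k$ with $\sum c_k\le 1+\delta$ and the threshold $1-\theta$ given by the exposure modulus.
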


The second class of applications concerns Lipschitz spaces. If \(M\) is
a pointed complete metric space, then \(\operatorname{Lip}_0(M)\) is the
dual of the Lipschitz-free space \(\mathcal F(M)\). Hence the preceding
duality results naturally apply to \(\operatorname{Lip}_0(M)\).  The known
picture for infinite metric spaces is quite limited. On the positive
side, if \(M\) is separable and complete and \(\mathcal F(M)\) has the
Radon--Nikodým property, then \(\bc(\operatorname{Lip}_0(M))=1\); in
particular this applies to complete purely \(1\)-unrectifiable metric
spaces by \cite{AGPP2022}.  On the negative side, if \(M\) is a length
metric space, then \(\operatorname{Lip}_0(M)\) has the Daugavet property
\cite{GLPRZ2018} and fails the \((-1)\)-BCP \cite{CLL2023}.

We add two new types of examples. First, for the unit circle \(\mathbb T\)
with the chordal metric, we prove that \(\operatorname{Lip}_0(\mathbb T)\)
has the BCP, indeed \(bc(\operatorname{Lip}_0(\mathbb T))=1\), but fails
the UBCP (see Theorem~\ref{thm:lip-torus-no-ubcp}). This gives a natural Lipschitz-space separation between the BCP
and the UBCP. Secondly, in Theorem~\ref{thm: UBCP in Lip}, we prove the following result. 

\begin{theorem*D}
    If $M$ is a separable complete uniformly Gromov concave metric space, then $\Lip(M)$ has the UBCP and $\bc(\Lip(M))=1$.
\end{theorem*D}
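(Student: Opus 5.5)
Theorem D will be obtained by combining Theorem B (second bullet) with the duality \(\Lip(M)=\Free(M)^*\). Since \(M\) is separable and complete, \(\Free(M)\) is separable. So it is enough to prove that \(B_{\Free(M)}\) is the closed convex hull of a set of uniformly strongly exposed points. Theorem~\ref{thm: UBCP in dual} then gives that \(\Lip(M)\) has the UBCP and \(\bc(\Lip(M))=1\).

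The natural candidate set is the set of molecules
\[
m_{xy}=\frac{\delta_x-\delta_y}{d(x,y)},\qquad x\neq y.
\]
It is standard that \(B_{\Free(M)}=\cconv\{m_{xy}\}\), so everything reduces to strong exposedness.

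We will use the definition of uniform Gromov concavity in the form: there is \(\varepsilon_0>0\) such that for all distinct \(x,y\) and all \(z\notin\{x,y\}\),
\[
d(x,z)+d(z,y)-d(x,y)\ge \varepsilon_0\min\{d(x,z),d(y,z)\}.
\]
For such spaces every molecule is a strongly exposed point; this is the known characterization of strongly exposed molecules due to García-Lirola, Procházka and Rueda Zoca. The task is to make the exposing functional and its modulus uniform in \((x,y)\).

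For a fixed pair \(x\ne y\), we will use the standard peaking function
\[
f_{xy}(t)=\frac{d(x,y)}{2}\cdot\frac{d(t,y)-d(t,x)}{d(t,x)+d(t,y)},
\]
normalized so that \(\|f_{xy}\|_{L}\le 1\) and \(\langle f_{xy},m_{xy}\rangle=1\). The key estimate is that if \(\langle f_{xy},m_{uv}\rangle>1-\delta\), then \(d(u,x)\) and \(d(v,y)\) are small relative to \(d(x,y)\), with a bound depending only on \(\varepsilon_0\) and \(\delta\). From this, \(\|m_{uv}-m_{xy}\|\le\eta(\delta)\) with \(\eta\to0\) independently of \(x,y\). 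To pass from molecules to the whole ball, we will use the usual argument: an element of \(B_{\Free(M)}\) nearly normed by \(f_{xy}\) is close to a convex combination of molecules \(m_{uv}\), and most of its mass sits on molecules with \(\langle f_{xy},m_{uv}\rangle>1-\sqrt\delta\). Hence it is within \(2\sqrt\delta+\eta(\sqrt\delta)\) of \(m_{xy}\), again uniformly. This slice-diameter step holds in any space once the estimate for molecules is uniform.

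The main obstacle is the uniform molecule estimate, and in particular checking that the constant in the concavity inequality actually controls the slope of \(f_{xy}\) along "\(u\) near \(x\), \(v\) near \(y\)" configurations. If the particular \(f_{xy}\) above does not give a uniform bound, the first fallback is to modify it, for example by truncating or taking a max/min with distance functions scaled by \(\varepsilon_0\), so that \(\langle f_{xy},m_{uv}\rangle\le 1-c(\varepsilon_0)\min\{d(u,x)+d(v,y),\,d(x,y)\}/d(x,y)\) can be verified directly from the inequality. The ultrametric and Hölder cases from the abstract should then follow as examples: ultrametrics satisfy the inequality with \(\varepsilon_0=1\), and Hölder snowflakes \(d^\alpha\) satisfy it with a constant depending on \(\alpha\).
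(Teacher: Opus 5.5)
Your reduction is exactly the paper's. $\mathcal F(M)$ is separable and $B_{\mathcal F(M)}=\cconv(\Mol{M})$. Once $\Mol{M}$ is known to be a set of uniformly strongly exposed points, Theorem~\ref{thm: UBCP in dual} gives the UBCP and $\bc(\Lip(M))=1$. The paper does not prove the uniform strong exposure of molecules; it quotes it from \cite[Remark~3.2]{CM2019}.

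The part you chose to prove yourself fails as written. Your $f_{xy}$ does not strongly expose $m_{xy}$ at all once $x$ is not an isolated point, whatever $\varepsilon_0$ is. Indeed, $f_{xy}(x)-f_{xy}(v)=d(x,y)\,d(v,x)/\bigl(d(v,x)+d(v,y)\bigr)$, hence
\[
\langle f_{xy},m_{xv}\rangle=\frac{d(x,y)}{d(v,x)+d(v,y)}\geq \frac{d(x,y)}{d(x,y)+2d(x,v)}\longrightarrow 1 \qquad (v\to x).
\]
On the other hand, $\|m_{xv}-m_{xy}\|\geq 1$ whenever $d(x,v)\leq d(x,y)/2$. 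To see this, test against a McShane extension of $x\mapsto 0$, $v\mapsto -d(x,v)$, $y\mapsto 0$. So your \emph{key estimate} is false already for $u=x$, since then $d(v,y)\approx d(x,y)$ is not small. It fails, e.g., in the Cantor set with its standard ultrametric. The reason is that near $x$ your function behaves like $\frac{d(x,y)}{2}-d(\cdot,x)$, i.e.\ it has radial slope close to $1$, and no concavity assumption can lower this. Your fallback (``truncate or take max/min with scaled distance functions'') does not name a function that works, so the central step of your argument is missing.

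The gap is easily repaired. Take $\phi_{xy}:=\frac12\bigl(d(\cdot,y)-d(\cdot,x)\bigr)$, shifted to vanish at the base point. It is $1$-Lipschitz and $\langle\phi_{xy},m_{xy}\rangle=1$. Use the paper's normalisation $(p,q)_z>\varepsilon_0\min\{d(p,z),d(q,z)\}$, where one may assume $\varepsilon_0\leq 1$. Applying uniform concavity to $(u,y)_v$ and to $(v,x)_u$ yields, for all $u\neq v$,
\[
\langle\phi_{xy},m_{uv}\rangle\leq 1-\varepsilon_0\,\frac{\min\{d(u,v),d(v,y)\}+\min\{d(u,v),d(u,x)\}}{d(u,v)}.
\]
The degenerate cases $u\in\{x,y\}$ or $v\in\{x,y\}$ are checked directly. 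Hence, if $\delta<\varepsilon_0$ and $\langle\phi_{xy},m_{uv}\rangle>1-\delta$, then $d(u,x)+d(v,y)<\frac{\delta}{\varepsilon_0}d(u,v)$. Consequently
\[
\|m_{uv}-m_{xy}\|\leq 2\,\frac{d(u,x)+d(v,y)}{d(u,v)}<\frac{2\delta}{\varepsilon_0},
\]
uniformly in $x,y$. Your convex-combination argument then correctly transfers this from molecules to all of $B_{\mathcal F(M)}$, and Theorem~\ref{thm: UBCP in dual} finishes the proof. Alternatively, simply cite \cite[Remark~3.2]{CM2019} as the paper does.
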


This theorem applies, in
particular, to separable ultrametric spaces and to Hölder metric spaces \cite{CM2019},
using the known characterization of uniformly Gromov concavity in terms
of uniformly strongly exposed molecules in \(\mathcal F(M)\)
\cite{CCGLMRZ2019}.

The paper is organized as follows. Section~\ref{sec: preliminaries} contains notation and
preliminary material on the ball-covering properties and related notions.
In Section~\ref{sec: main results}, we prove the main dual results: the SCD characterization of
\(bc(X^*)=1\) (see Theorem~\ref{thm: BCP in dual}), the \(K<2\) unconditional basis criterion (see Proposition~\ref{prop:k-unconditional-dual-bcp}), the sharp
\(2\)-unconditional counterexample (see Proposition~\ref{prop:two-unconditional-dual-no-bcp}), and the uniformly strongly exposed
criterion for the UBCP (see Theorem~\ref{thm: UBCP in dual}). Section~\ref{sec: L(X,Y)} applies these results to spaces of
operators via projective tensor products, and the main results of this section are Theorems~\ref{thm: BCP in L(X,Y)} and \ref{thm: UBCP in L(X,Y*)}. Section~\ref{sec: Lipschitz spaces} is devoted to
Lipschitz spaces, including the example of the circle (see Theorem~\ref{thm:lip-torus-no-ubcp}), and the positive
results for metric spaces for which $B_{\mathcal F(M)}$ is the closed convex hull of its denting points (see Proposition~\ref{prop: BCP in Lipschitz}) and uniformly Gromov concave metric spaces (see Theorem~\ref{thm: UBCP in Lip}).

\section{Notation and preliminary results}\label{sec: preliminaries}
The aim of this section is to fix notation and to record the elementary
facts on ball-covering properties and geometric notions of unit balls
which will be used throughout the paper. In particular, we collect the
basic implications between the quantitative ball-covering properties,
the terminology concerning SCD sets, and the notions of strong and
uniform strong exposure.

\subsection{General notation}

Throughout the paper, all Banach spaces are assumed to be real.
We use standard notation as in \cite{FHHMZ2011}. Our notation is completely standard and follows standard texts as \cite{FHHMZ2011}. For a Banach space \(X\), we denote by \(B_X\) and \(S_X\) its closed unit ball and unit sphere, respectively. 
The open ball with center $x$ and radius $r>0$ is denoted by \(B(x,r)\). 
For a subset \(A\) of a Banach space, we write \(\cconv(A)\) and \(\aconv(A)\) for the closed convex hull and the closed absolutely convex hull of \(A\), respectively. 

Let \(X\) be a Banach space with a Schauder basis \((e_n)\).
Recall that \((e_n)\) is called \emph{unconditional} if the basis
expansion
\(
x=\sum_{n\ge 1}^{\infty} a_n e_n
\)
of every \(x\in X\) converges unconditionally. Equivalently, there is
a constant \(K\geq 1\) such that
\begin{equation}\label{eq: uncond basis}
    \left\|\sum_{n\geq 1}a_ne_n\right\|
    \leq
    K\left\|\sum_{n\geq 1}b_ne_n\right\|
\end{equation}
whenever \(a=(a_n),b=(b_n)\in c_{00}\) and
\(|a_n|\leq |b_n|\) for every \(n\in\mathbb N\)
\cite{AK2006}.

The unconditional basis constant \(K_u\) of \((e_n)\) is the least
constant \(K\) for which inequality \eqref{eq: uncond basis} holds.
We say that \((e_n)\) is \emph{\(K\)-unconditional} whenever \(K\geq K_u\).

Let $X$ and $Y$ be Banach spaces. By $\mathcal{L}(X,Y)$ we will denote the space of bounded linear operators from $X$ to $Y$.

The projective tensor product of $X$ and $Y$, denoted by $X \pten Y$, is the completion of the algebraic tensor product $X \otimes Y$ endowed with the norm
$$
\|z\|_{\pi} := \inf \left\{ \sum_{n=1}^k \|x_n\| \|y_n\|\colon  z = \sum_{n=1}^k x_n \otimes y_n \right\},$$
where the infimum is taken over all such representations of $z$ (see \cite{R2002}). It is well known that $\|x \otimes y\|_{\pi} = \|x\| \|y\|$ for every $x \in X$, $y \in Y$, and that the closed unit ball of $X \pten Y$ is the closed convex hull of the set $B_X \otimes B_Y = \{ x \otimes y\colon  x \in B_X, y \in B_Y \}$. 
Recall that, for Banach spaces \(X\) and \(Y\), there are
canonical isometric identifications
\[
        \mathcal L(X,Y^*)
        \cong
        (X\widehat\otimes_\pi Y)^*
        \cong
        \mathcal L(Y,X^*).
\]
For more background on tensor products and the identifications above, we refer the reader to \cite{R2002}.

Given a sequence $\{X_n\colon n\in \mathbb{N}\}$ of Banach spaces, we write
$$
\left(\bigoplus_{n=1}^\infty X_n\right)_{\ell_1},\qquad \left(\bigoplus_{n=1}^\infty X_n\right)_{\ell_\infty}
$$
to denote, respectively, the $\ell_1$-sum and the $\ell_\infty$-sum of the sequence. 

\subsection{Slices and SCD sets}

If $C$ is a bounded convex subset of a Banach space $X$, by a \textit{slice} of $C$ we will mean the non-empty intersection of an open half-space with the set $C$. Every slice can be written in the form 
$$S(C,f,\delta):=\{x\in C\colon  f(x)>\sup f(C)-\delta\}$$
for suitable $f\in X^*$ and $\delta>0$. 

The notion of a slicely countably determined set was first introduced by A.~Avil\'es,  V.~Kadets, M.~Mart\'in, J.~Mer\'i, and V.~Shepelska in \cite[Definition~2.5]{AKMMS2010}. A separable Banach space $X$ is said to be slicely countably determined if every convex bounded subset of $X$ is an SCD set. These spaces generalize simultaneously Banach spaces which are Asplund or have the Radon--Nikod\'ym property.

\begin{definition}[{\cite{AKMMS2010}}]\label{def: SCD set}
    Let $X$ be a Banach space. A convex bounded subset $A$ of $X$  is said to be a \emph{slicely countably determined set} (for short, SCD set) if there is a sequence of slices $\{S_n\colon n\in\mathbb N\}$ of $A$ such that for every sequence $\{b_n\}_{n\in\mathbb N}$ with $b_n\in S_n$, one has that $A\subseteq \overline{\conv}(\{b_n\colon n\in\mathbb N\})$. 
\end{definition}
Definition~\ref{def: SCD set} is equivalent to the following: there is a sequence of slices $\{S_n\colon n\in\mathbb N\}$ of $A$ such that for every slice $S$ of $A$, there is some $n\in\mathbb N$ such that $S_n\subseteq S$ \cite{AKMMS2010}. The unit ball of a separable Banach space $X$ is SCD, for example, in the following cases:
\begin{itemize}
    \item $X$ is Asplund;
    \item $X$ has the Radon--Nikod\'ym property;
    \item $X$ has a 1-unconditional basis \cite[Theorem~3.1]{KMMW2013};
    \item $X$ is a separable Banach space with a locally uniformly rotund norm \cite[Example~2.10]{AKMMS2010}.
\end{itemize}

\subsection{Denting and strongly exposed points}
\begin{definition}
Let \(C\) be a bounded closed convex subset of a Banach space \(X\). 
\begin{itemize}
    \item A point \(x\in C\) is called a \emph{denting point} of \(C\) if for every
\(\varepsilon>0\) there is a slice \(S\) of \(C\) such that
\(x\in S\) and \(\operatorname{diam}(S)<\varepsilon\).
\item  A point \(x\in C\) is called a \emph{strongly exposed point} of \(C\) if there is
\(f\in X^*\) such that \(f(x)=\sup f(C)\) and, for every \(\varepsilon>0\),
there exists \(\delta>0\) such that
\[
        y\in C,\quad f(y)>\sup f(C)-\delta
        \quad\Longrightarrow\quad
        \|y-x\|<\varepsilon .
\]
In this case we say that \(f\) strongly exposes \(C\) at \(x\).
\end{itemize}

\end{definition}

The following concept of a uniformly strongly exposed set was first introduced by J. Lindenstrauss in \cite{L1963}, where he used it to study the density of norm-attaining operators.

\begin{definition}\label{def: USE set}
Let \(X\) be a Banach space. A subset \(U\subset S_X\) is said to be a \emph{set of uniformly strongly exposed points} if there is a family of functionals \(\{f_u\}_{u\in U}\) with
\[
\|f_u\|=f_u(u)=1
\qquad\text{for every }
u\in U,
\] such that for every \(\varepsilon>0\) there is \(\delta>0\) such that, whenever \(z\in B_X\) satisfies
\[
f_u(z)>1-\delta
\]
for some \(u\in U\), then
\[
\|u-z\|<\varepsilon.
\]
\end{definition}

Examples of Banach spaces $X$, where $S_X$ is a uniformly exposed set include uniformly convex Banach spaces. Another example is $\ell_1(\Gamma)$, where $U=\{\pm e_\gamma\colon \gamma\in \Gamma\}$.

We now give a simple characterization of uniformly strongly exposing set in a separable Banach space, the proof it is straigthforward, and thus omitted.
\begin{lemma}\label{lem:countable-use-selection}
Let \(X\) be a separable Banach space, and let \(U\subset S_X\) satisfy
\[
        B_X=\overline{\operatorname{conv}}(U).
\]
Then there is a sequence \((u_n)\subset U\) such that
\[
        B_X=\overline{\operatorname{conv}}\{u_n:n\in\mathbb N\}.
\]
In particular,
\[
        \sup_{n\in\mathbb N}|x^*(u_n)|=\|x^*\|
        \qquad (x^*\in X^*).
\]
If, moreover, \(U\) is uniformly strongly exposed by a family
\((f_u)_{u\in U}\), then \((u_n)\) is uniformly strongly exposed by
\((f_{u_n})\).
\end{lemma}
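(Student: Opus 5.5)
The plan is to use separability of $X$ to select a countable subset of $U$ that is dense in $U$. Since $X$ is separable, it is a separable metric space, and so is every subset of it. In particular, $U$ is separable in the norm topology, so we can choose a sequence $(u_n)\subset U$ that is norm dense in $U$ (if $U$ is finite, we repeat points).

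With this sequence, set $C=\overline{\operatorname{conv}}\{u_n:n\in\mathbb N\}$. This is a closed convex set containing $\{u_n\}$, so it contains the norm closure of $\{u_n\}$, which includes $U$. Hence
\[
\overline{\operatorname{conv}}(U)\subseteq C\subseteq \overline{\operatorname{conv}}(U)=B_X .
\]
This gives $B_X=C$.

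For the norming identity, fix $x^*\in X^*$. First, $|x^*(u_n)|\le\|x^*\|$ because $u_n\in S_X$. For the reverse inequality, note that $\sup x^*$ over $\operatorname{conv}\{u_n\}$ equals $\sup_n x^*(u_n)$ by linearity, and that the supremum does not change on passing to the closure, by continuity of $x^*$. Therefore
\[
\|x^*\|=\sup x^*(B_X)=\sup_n x^*(u_n)\le \sup_n|x^*(u_n)| .
\]

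The uniform strong exposure of $(u_n)$ is immediate. The definition of a set of uniformly strongly exposed points quantifies over every $u\in U$ with a single $\delta$ for each $\varepsilon$. It therefore applies verbatim to the subfamily $\{u_n\}$ with the functionals $f_{u_n}$, which satisfy $\|f_{u_n}\|=f_{u_n}(u_n)=1$.

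There is no real obstacle here. The only point needing care is to choose $(u_n)$ dense in $U$ itself rather than in $B_X$, so that the selected points still lie in $U$. This is possible because subspaces of separable metric spaces are separable.
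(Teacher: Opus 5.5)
Your proposal is correct. The paper omits this proof as straightforward, and your argument is exactly the routine verification it has in mind: a norm-dense sequence in \(U\), whose closed convex hull contains \(U\) and hence equals \(B_X\); norming by linearity and continuity of \(x^*\); and inheritance of the single \(\delta(\varepsilon)\) from \((f_u)_{u\in U}\) by the subfamily \((f_{u_n})\).
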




\subsection{Ball-covering properties}
We will now give the definitions of the different versions of ball-covering properties and establish some preliminary results on them. We begin by noting that ball-covering properties can be equivalently defined with closed balls as well. We start with the original ball-covering property introduced by L.~Cheng in 2006.

\begin{definition}[\cite{Cheng2006}]
    Let $X$ be a Banach space. Then $X$ is said to have the \emph{ball-covering property} (BCP, in short) if there is a countable collection of balls $\{B(x_n,r_n)\}$ with the following properties:
    \begin{enumerate}
        \item[(1)] $S_X\subset \bigcup_{n\in\mathbb{N}}B(x_n,r_n)$;
        \item[(2)] $\|x_n\|\geq r_n$.
    \end{enumerate}
\end{definition} 

Z.~Luo and B.~Zheng introduced in 2021 the following strengthenings of the classical ball-covering property.

\begin{definition}[\cite{LuoZheng2021}]
    Let $X$ be a Banach space and $r>0$. Then $X$ is said to have the \emph{$r$-strong ball-covering property} ($r$-SBCP, in short) if there is a countable collection of balls $\{B(x_n,r_n)\}$ with the following properties:
    \begin{enumerate}
        \item[(1)] $S_X\subset \bigcup_{n\in\mathbb{N}}B(x_n,r_n)$;
        \item[(2)] $\|x_n\|\geq r_n$;
        \item[(3)] $\sup_{n\in\mathbb{N}}r_n\leq r$.
    \end{enumerate}
    The space $X$ is said to have the \emph{strong ball-covering property} (SBCP, in short) if it has the $r$-SBCP for some $r>0$.
\end{definition} 

\begin{definition}[\cite{LuoZheng2021}]
    Let $X$ be a Banach space and $r,\alpha>0$. Then $X$ is said to have the \emph{$(r,\alpha)$-uniform ball-covering property} ($(r,\alpha)$-UBCP, in short) if there is a countable collection of balls $\{B(x_n,r_n)\}$ with the following properties:
    \begin{enumerate}
        \item[(1)] $S_X\subset \bigcup_{n\in\mathbb{N}}B(x_n,r_n)$;
        \item[(2')] $\|x_n\|\geq r_n+\alpha$;
        \item[(3)] $\sup_{n\in\mathbb{N}}r_n\leq r$.
    \end{enumerate}
    The space $X$ is said to have the \emph{uniform ball-covering property} (UBCP, in short) if it has the $(r,\alpha)$-UBCP for some $r,\alpha>0$.
\end{definition} 

From the definitions it is clear that
\[
\text{UBCP $\implies$ SBCP $\implies$ BCP.}
\]

We now give an equivalent formulation of the UBCP, which will be useful to us in Section~\ref{sec: Lipschitz spaces}, and a variant of it can be found in \cite{LuoZheng2021}.

\begin{lemma}[cf. {\cite[Lemmata~2.1 and 2.2]{LuoZheng2021}}]\label{lem:ubcp-normal-form}
Let \(X\) be a Banach space with the UBCP. Then there exist numbers
\(R>\alpha>0\) and a sequence \((g_n)\subset S_X\) such that
\[
        S_X\subset \bigcup_{n=1}^\infty B(Rg_n,R-\alpha).
\]
More precisely, if \(X\) has the \((r,\alpha)\)-UBCP, then every
\(R>r+1\) can be used.
\end{lemma}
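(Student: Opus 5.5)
Starting from an $(r,\alpha)$-UBCP family $\{B(x_n,r_n)\}$, the plan is to enlarge each ball radially so that it becomes a ball centred at $Rg_n$ with $g_n=x_n/\|x_n\|$ and radius $R-\alpha'$. The basic observation is that for a unit vector $g$ and $0<s\le t$ we have $B(sg,s-\beta)\subset B(tg,t-\beta)$: if $\|y-sg\|<s-\beta$, then $\|y-tg\|\le \|y-sg\|+(t-s)<t-\beta$. Thus moving the centre outward along the ray while growing the radius by the same amount keeps the gap to the origin equal to $\beta$. We discard the balls that do not meet $S_X$ and may assume all $r_n\le r$ and $\|x_n\|\ge r_n+\alpha$.

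Fix a ball $B(x_n,r_n)$ and set $g_n=x_n/\|x_n\|\in S_X$ and $\beta_n=\|x_n\|-r_n\ge\alpha$. Then
\[
B(x_n,r_n)=B(\|x_n\|g_n,\|x_n\|-\beta_n)\subset B(\|x_n\|g_n,\|x_n\|-\alpha),
\]
since decreasing $\beta$ only enlarges the ball. To apply the radial inclusion with $t=R$ we need $\|x_n\|\le R$. Because the ball meets $S_X$, we get $\|x_n\|< r_n+1\le r+1<R$. Hence
\[
B(x_n,r_n)\subset B(Rg_n,R-\alpha)
\]
for every $n$, and therefore $S_X\subset\bigcup_n B(Rg_n,R-\alpha)$.

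It remains to check $R>\alpha$, which holds because $R>r+1>\alpha$. Indeed, any ball meeting $S_X$ satisfies $\alpha\le\|x_n\|-r_n<1$, so $\alpha<1$, at least if some ball is nonempty on $S_X$, as it is whenever $X\neq\{0\}$. The only points needing care are the triangle-inequality radial step and discarding the irrelevant balls to obtain the bound $\|x_n\|<r+1$. I do not expect any real obstacle here.
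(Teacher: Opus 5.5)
Your argument is correct and complete. The radial inclusion $B(sg,s-\beta)\subset B(tg,t-\beta)$ for $g\in S_X$ and $0<s\le t$, combined with the bound $\|x_n\|<1+r_n\le 1+r<R$ for every ball meeting $S_X$ (which also gives $\alpha\le\|x_n\|-r_n<1<R$), yields $B(x_n,r_n)\subset B(Rg_n,R-\alpha)$ as required.

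The paper gives no proof of this lemma, only a pointer to Luo--Zheng, and your ray-enlargement argument is the natural one behind that reference. The only cosmetic detail is that if just finitely many balls survive the discarding step, you repeat one of them to obtain a sequence $(g_n)$.
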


The following extension of the ball-covering property was introduced by A.~Guirao, A.~Lissitsin, and V.~Montesinos in 2019.

\begin{definition}[\cite{GLM2019}]
    Let $X$ be a Banach space and $\alpha\in[-1,1)$. Then $X$ is said to have the \emph{strictly $\alpha$-off ball-covering property} ($\alpha$-BCP, in short) if there is a countable collection of balls $\{B(x_n,r_n)\}$ with the following properties:
    \begin{enumerate}
        \item[(1)] $S_X\subset \bigcup_{n\in\mathbb{N}}B(x_n,r_n)$;
        \item[(2')] $\|x_n\|\geq r_n+\alpha$.
    \end{enumerate}
\end{definition}

Figure~\ref{fig:relations-between-ball-covering-properties} illustrates the relationship between different ball-covering properties. The examples that differentiate ball-covering properties from each other can be found in \cite{GLM2019} and \cite{LuoZheng2021}. 

\begin{figure}[H]
\centering
\begin{tikzpicture}

\def\arrsep{3mm}

\tikzset{
    propbox/.style={
        draw,
        text width=3.25cm,
        align=center,
        outer sep=5pt,
        minimum height=8mm
    },
    sidebox/.style={
        draw,
        text width=3cm,
        align=center,
        outer sep=5pt,
        minimum height=8mm
    },
    impl/.style={
        -{Implies},
        double,
        double distance=2.4pt,
        shorten <=2.5mm,
        shorten >=2.5mm
    },
    nonimpl/.style={
        -{Implies},
        double,
        double distance=2.4pt,
        shorten <=2.5mm,
        shorten >=2.5mm
    }
}

\node[propbox] (B) at (0,0) {BCP\\[-1mm] {\((\alpha=0)\)}};
\node[propbox] (S) at (0,2.5) {$r$-SBCP};
\node[propbox] (U) at (0,5.0) {$(r,\alpha)$-UBCP};

\node[sidebox] (L) at (-5.6,0) {$\alpha$-BCP\\[-1mm]$(\alpha<0)$};
\node[sidebox] (G) at (5.6,0) {$\alpha$-BCP\\[-1mm]$(\alpha>0)$};


\draw[impl]
    ([xshift=-\arrsep]U.south) --
    ([xshift=-\arrsep]S.north);

\draw[impl]
    ([xshift=-\arrsep]S.south) --
    ([xshift=-\arrsep]B.north);

\draw[nonimpl]
    ([xshift=\arrsep]B.north) --
    coordinate (nBS)
    ([xshift=\arrsep]S.south);

\draw[nonimpl]
    ([xshift=\arrsep]S.north) --
    coordinate (nSU)
    ([xshift=\arrsep]U.south);

\draw[line width=0.6pt] ($(nBS)+(-4pt,-4pt)$)--($(nBS)+(4pt,4pt)$);
\draw[line width=0.6pt] ($(nSU)+(-4pt,-4pt)$)--($(nSU)+(4pt,4pt)$);


\draw[impl]
    ([yshift=\arrsep]B.west) --
    ([yshift=\arrsep]L.east);

\draw[impl]
    ([yshift=\arrsep]G.west) --
    ([yshift=\arrsep]B.east);

\draw[nonimpl]
    ([yshift=-\arrsep]L.east) --
    coordinate (nLB)
    ([yshift=-\arrsep]B.west);

\draw[nonimpl]
    ([yshift=-\arrsep]B.east) --
    coordinate (nBG)
    ([yshift=-\arrsep]G.west);

\draw[line width=0.6pt] ($(nLB)+(-4pt,-4pt)$)--($(nLB)+(4pt,4pt)$);
\draw[line width=0.6pt] ($(nBG)+(-4pt,-4pt)$)--($(nBG)+(4pt,4pt)$);


\draw[impl]
    (U.east) -- (G.north);


\draw[nonimpl]
    ([xshift=-6mm,yshift=1mm]G.north) --
    coordinate (nGS)
    ([xshift=1mm,yshift=-1mm]S.east);

\draw[line width=0.6pt] ($(nGS)+(-5pt,-5pt)$)--($(nGS)+(5pt,5pt)$);

\end{tikzpicture}
\caption{Relations between ball-covering properties.}
\label{fig:relations-between-ball-covering-properties}

\end{figure}

Following \cite{GLM2019}, we denote the \emph{ball-covering index} of a Banach space $X$ by
\[
 \operatorname{bc}(X)
 :=
 \sup\{\alpha\in[-1,1): X \text{ has the } \alpha\text{-BCP}\}.
\]
We use the convention that this supremum is equal to \(-1\) when the set
is empty.

We now collect some useful facts about ball-covering properties, which will be used throughout the rest of the paper.

\begin{lemma}[\cite{GLM2019}]\label{lem:basic-implications}
Let \(X\) be a Banach space.
\begin{enumerate}
\item[(a)] If $X^*$ has the BCP, then $X$ is separable.
\item[(b)] If \(X\) has the \(\alpha\)-BCP and \(-1\leq \beta\leq \alpha<1\),
then \(X\) has the \(\beta\)-BCP.
\item[(c)] If \(X\) has the \((r,\alpha)\)-UBCP, then \(X\) has the \(r\)-SBCP
and the \(\alpha\)-BCP.
\item[(d)] Consequently,
\[
        bc(X)=1
        \quad\Longleftrightarrow\quad
        X \text{ has the } \alpha\text{-BCP for every } -1\leq \alpha<1 .
\]
\end{enumerate}
\end{lemma}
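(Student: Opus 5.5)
We prove the four items in order, since each is short and (d) rests on (b).

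For (a), suppose \(X^*\) has the BCP, with \(S_{X^*}\subset\bigcup_n B(x_n^*,r_n)\) and \(\|x_n^*\|\geq r_n\). We show that \(X\) is separable by showing that the closed linear span \(Z\) of a countable set of near-norming points is all of \(X\).
\begin{enumerate}
\item[(i)] For each \(n\) and each \(k\), choose \(z_{n,k}\in S_X\) with \(x_n^*(z_{n,k})>\|x_n^*\|-1/k\).
\item[(ii)] Let \(Z\) be the closed linear span of \(\{z_{n,k}\}\). It is separable, and we claim \(Z=X\).
\item[(iii)] If not, Hahn--Banach gives \(f\in S_{X^*}\) with \(f|_Z=0\).
\item[(iv)] Then \(f\in B(x_n^*,r_n)\) for some \(n\). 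This gives
\[
\|x_n^*\|-1/k< x_n^*(z_{n,k})=(x_n^*-f)(z_{n,k})\leq \|x_n^*-f\|
\]
for every \(k\).
\item[(v)] Letting \(k\to\infty\) gives \(\|x_n^*\|\leq\|x_n^*-f\|<r_n\leq\|x_n^*\|\), which is a contradiction.
\end{enumerate}
This norming-set argument is the only step with any content. Its main subtlety is obtaining the strict inequality, which is what forces the use of open balls. For closed balls one would first shrink the radii slightly, which is harmless here.

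For (b), the same covering witnesses the \(\beta\)-BCP. Indeed, \(\|x_n\|\geq r_n+\alpha\geq r_n+\beta\).

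For (c), take the \((r,\alpha)\)-UBCP covering. Conditions (1) and (3) hold by assumption. Condition (2) follows from (2') because \(\alpha>0\), so \(\|x_n\|\geq r_n\). This gives the \(r\)-SBCP. Condition (2') itself is precisely the \(\alpha\)-BCP when \(\alpha<1\). If \(\alpha\geq1\) is allowed in the UBCP definition, we instead pass down to any \(\alpha'<1\) with \(\alpha'\leq\alpha\).

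For (d), one direction is immediate. If \(X\) has the \(\alpha\)-BCP for every \(\alpha\in[-1,1)\), then the supremum defining \(\bc(X)\) equals \(1\).

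For the converse, suppose \(\bc(X)=1\) and fix \(\alpha<1\). By the definition of the supremum there is \(\gamma\in(\alpha,1)\) for which \(X\) has the \(\gamma\)-BCP. Part (b) then yields the \(\alpha\)-BCP. The case \(\alpha=-1\) is covered by the same argument.
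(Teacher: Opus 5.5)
Your proof is correct. The paper gives no proof of its own here; it quotes the lemma from \cite{GLM2019}, with (a) being \cite[Remark~11.2]{GLM2019}. Your Hahn--Banach norming argument for (a) and your direct unwinding of the definitions for (b)--(d) are the standard arguments behind it.

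Two cosmetic points. In (c), the hedge about $\alpha\geq 1$ is unnecessary: for $X\neq\{0\}$, any ball $B(x_n,r_n)$ meeting $S_X$ gives $r_n+\alpha\leq\|x_n\|<r_n+1$, so $\alpha<1$ automatically. In your aside on closed balls, the right condition is $\|x_n^*\|>r_n$, and one then \emph{enlarges} the radii slightly to pass to open balls; shrinking them would destroy the covering.
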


We will now recall the following lemma, which directly follows from the discussion in \cite[Page~611]{GLM2019}.
  
\begin{lemma}[\cite{GLM2019}]\label{lem:homogeneous-alpha-bcp}
Let \(X\) be a Banach space and let \(\alpha\in [0,1)\). If
\(\operatorname{bc}(X)=1\), then there exists a countable set \(A\subset X\)
such that for every \(x\in X\setminus\{0\}\) there is \(a\in A\) satisfying
\[
        \|a\|-\|a-x\|>\alpha\|x\|.
\]
\end{lemma}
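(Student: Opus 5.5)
The plan is to reduce the statement to the \(\alpha\)-BCP itself, and then handle all norms at once by rational rescaling of the centres. Since \(\operatorname{bc}(X)=1\) and \(\alpha<1\), Lemma~\ref{lem:basic-implications}(d) shows that \(X\) has the \(\alpha\)-BCP. So we fix balls \(B(x_n,r_n)\) with
\[
S_X\subset\bigcup_{n}B(x_n,r_n)\qquad\text{and}\qquad \|x_n\|\ge r_n+\alpha .
\]
We then take the countable set
\[
A=\{q x_n:\ q\in\mathbb Q,\ q>0,\ n\in\mathbb N\}.
\]

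\emph{Unit vectors.} For \(u\in S_X\) we choose \(n\) with \(\|u-x_n\|<r_n\). This gives
\[
\|x_n\|-\|x_n-u\|>\|x_n\|-r_n\ge \alpha .
\]
Hence the gap \(\gamma:=\|x_n\|-\|x_n-u\|-\alpha\) is strictly positive. This already proves the claim for \(\|x\|=1\) with \(a=x_n\).

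\emph{General \(x\).} Let \(x\ne 0\) and write \(x=tu\) with \(t=\|x\|>0\) and \(u\in S_X\). Take \(n\) and \(\gamma>0\) as above for \(u\). If \(t\) were rational, \(a=tx_n\) would work by homogeneity, since then
\[
\|a\|-\|a-x\|=t\bigl(\|x_n\|-\|x_n-u\|\bigr)>t\alpha .
\]
In general we choose a rational \(q>0\) close to \(t\). The triangle inequality gives
\[
\|qx_n\|-\|qx_n-tu\|\ \ge\ q\|x_n\|-q\|x_n-u\|-|q-t|\ =\ q(\alpha+\gamma)-|q-t| .
\]
Subtracting \(t\alpha\), the right-hand side exceeds \(t\alpha\) by
\[
(q-t)\alpha+q\gamma-|q-t|\ \ge\ q\gamma-2|q-t| ,
\]
where we used \(0\le\alpha<1\). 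This is positive as soon as \(|q-t|<q\gamma/2\), which holds for rational \(q\) sufficiently close to \(t\). Thus \(a=qx_n\in A\) satisfies \(\|a\|-\|a-x\|>\alpha\|x\|\).

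The only delicate point is this last step: passing from the uncountable family of scalings \(tx_n\) to a countable set. It works because the inequality for unit vectors is strict, which gives a margin \(\gamma\) that absorbs the approximation error \(|q-t|\).
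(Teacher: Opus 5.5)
Your proof is correct and complete. The paper gives no proof of this lemma; it only says the statement follows from the discussion in \cite{GLM2019}. Your argument is a natural self-contained proof: you close the centres of an \(\alpha\)-BCP covering under positive rational multiples, and the strict inequality \(\|x_n\|-\|x_n-u\|>\alpha\) on \(S_X\) gives a margin \(\gamma\) that absorbs the error \(|q-t|\).

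Two small remarks.
\begin{enumerate}
\item You only use the \(\alpha\)-BCP, so your argument shows the conclusion is actually equivalent to the \(\alpha\)-BCP. For the converse, the balls \(B(a,\|a\|-\alpha)\) with \(a\in A\) and \(\|a\|>\alpha\) witness the \(\alpha\)-BCP.
\item If you want to use the full hypothesis \(\operatorname{bc}(X)=1\), take the \(\beta\)-BCP for some \(\beta\in(\alpha,1)\). This gives the uniform margin \(t(\beta-\alpha)\), so any rational \(q\) with \(|q-t|\le t(\beta-\alpha)/2\) works. This \(q\) depends only on \(t=\|x\|\), not on \(u\) or \(n\), so the pointwise \(\gamma\) is no longer needed.
\end{enumerate}
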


It is straightforward to check that $\ell_1(\Gamma)$ is a space without the $(-1)$-BCP whenever $\Gamma$ is uncountable (see \cite[Section~5]{GLM2019}). The proof of \cite[Theorem~2.3]{CKWZ2010}, given there for $\ell_1[0,1]$, uses only the failure of \((-1)\)-BCP and yields the following result. 

\begin{proposition}\label{prop: ball-coverings with big radius}
Let \((X,\|\cdot\|)\) be a Banach space without the \((-1)\)-BCP, let
\(\varepsilon>0\), and let \(|\cdot|\) be an equivalent norm on \(X\)
such that
\[
        \|x\|\leq |x|\leq (1+\varepsilon)\|x\|
        \qquad (x\in X).
\]
Then, for every countable family of open balls $\bigl\{B_{|\cdot|}(x_n,r_n)\colon n\in\mathbb N\}$
satisfying
\[
        S_{(X,|\cdot|)}
        \subset \bigcup_{n=1}^\infty B_{|\cdot|}(x_n,r_n)
        \quad\text{and}\quad
        |x_n|\geq r_n \quad(n\in\mathbb N),
\]
one has
\[
        \sup_{n\in\mathbb N} r_n>\frac1\varepsilon .
\]
\end{proposition}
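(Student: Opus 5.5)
Argue by contradiction: assume all $r_n\le 1/\varepsilon$ and manufacture a $(-1)$-BCP cover of $(X,\|\cdot\|)$, contrary to the hypothesis. In the original norm, the $(-1)$-BCP only requires a countable family of open balls $B(y_n,s_n)$ covering $S_X$ with $\|y_n\|\ge s_n-1$. The idea is that a uniform bound on the radii in $|\cdot|$, together with the closeness of the two norms, lets us shrink each $|\cdot|$-ball to a $\|\cdot\|$-ball whose radius exceeds the norm of its centre by less than $1$.

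Fix a covering $\{B_{|\cdot|}(x_n,r_n)\}$ of $S_{(X,|\cdot|)}$ with $|x_n|\ge r_n$, and assume $r_n\le 1/\varepsilon$ for all $n$. First we transfer the cover to $S_X$. Take $x\in S_X$ and set $t=|x|\in[1,1+\varepsilon]$. Then $x/t\in S_{(X,|\cdot|)}$, so $|x/t-x_n|<r_n$ for some $n$, and hence $|x-tx_n|<tr_n$. The scaling $t$ varies with $x$, so we discretize it: choose a finite $\eta$-net $t_1,\dots,t_k$ of $[1,1+\varepsilon]$. With $t_j$ close to $t$,
\[
|x-t_jx_n|<tr_n+\eta|x_n|.
\]
We want a countable family, so we consider all balls indexed by pairs $(n,j)$. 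Passing to $\|\cdot\|$ using $\|y\|\le|y|$, we get $\|x-t_jx_n\|<(1+\varepsilon)r_n+\eta|x_n|$.

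The centres must still satisfy $\|t_jx_n\|\ge s_{n,j}-1$. We have
\[
\|t_jx_n\|\ge \frac{|x_n|}{1+\varepsilon}\ge \frac{r_n}{1+\varepsilon}.
\]
So the defect between radius and centre norm is about $(1+\varepsilon)r_n-r_n/(1+\varepsilon)\approx 2\varepsilon r_n$, which is at most about $2$ when $r_n\le1/\varepsilon$. That loses a factor of $2$, so the crude bound is not enough.

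This is the main obstacle, and the fix is to use the radial geometry rather than naive norm comparison. A point $x\in S_X$ lies on the ray through $x/|x|$, and the ball of radius $r_n$ with $|x_n|\ge r_n$ is nearly tangent at the origin. I would replace each ball by a ball with the same centre, radius $r_n$, and no rescaling: compare the point $x/|x|$ with $x$, whose $\|\cdot\|$-distance is $\bigl(1-1/|x|\bigr)\le\varepsilon/(1+\varepsilon)$. For $x\in S_X$ this gives $\|x-x_n\|<r_n+\varepsilon$ after bounding $\|\cdot\|\le|\cdot|$. The centre satisfies $\|x_n\|\ge |x_n|/(1+\varepsilon)\ge r_n-\varepsilon r_n/(1+\varepsilon)$. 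Hence
\[
\|x_n\|-(r_n+\varepsilon)\ge -\varepsilon\Bigl(\frac{r_n}{1+\varepsilon}+1\Bigr).
\]
With $r_n\le 1/\varepsilon$ this is at least $-\frac1{1+\varepsilon}-\varepsilon$, which is still not strictly greater than $-1$ for all $\varepsilon$.

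So I expect to sharpen with $r_n\le 1/\varepsilon-\delta$, or to enlarge the centres along their rays. The latter uses the fact that $B(\lambda x_n,\,r_n+(\lambda-1)|x_n|)$ contains $B(x_n,r_n)$ while the defect is unchanged in $|\cdot|$. Once the defect is tuned below $1$, following the estimate in \cite[Theorem~2.3]{CKWZ2010}, the $(-1)$-BCP follows and gives the contradiction.
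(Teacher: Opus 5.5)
\textbf{There is a genuine gap: your argument never reaches the conclusion.} The only estimate you carry through ends with $\|x_n\|-(r_n+\varepsilon)\ge-\frac{1}{1+\varepsilon}-\varepsilon$. This is strictly below $-1$ for every $\varepsilon>0$, and neither remedy you propose repairs it.

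Assuming $r_n\le 1/\varepsilon-\delta$ changes the statement. With your estimate you would need $\delta\ge\varepsilon$, so you would only get $\sup_n r_n>1/\varepsilon-\varepsilon$.

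Moving the centres out to $\lambda x_n$, with radius $r_n+(\lambda-1)|x_n|$, goes the wrong way. The loss occurs when you pass from $|\lambda x_n|$ to $\|\lambda x_n\|\ge \lambda|x_n|/(1+\varepsilon)$, and this loss grows with $\lambda$. The guaranteed lower bound for $\|\lambda x_n\|$ minus the new radius is $|x_n|\bigl(1-\frac{\lambda\varepsilon}{1+\varepsilon}\bigr)-r_n$, which decreases in $\lambda$. Moreover, the enlarged radii are no longer bounded by $1/\varepsilon$, and that bound is the hypothesis you need. That the defect is unchanged in $|\cdot|$ is irrelevant.

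The closing appeal to \cite{CKWZ2010} defers the argument rather than supplying it.

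\textbf{The missing step is simply not discarding a factor.} You already had $\|x-x/|x|\|=1-1/|x|\le\frac{\varepsilon}{1+\varepsilon}$ for $x\in S_X$, but then used the radius $r_n+\varepsilon$. Keep $s_n:=r_n+\frac{\varepsilon}{1+\varepsilon}$ instead. If $|x/|x|-x_n|<r_n$, then $\|x-x_n\|<s_n$, so the balls $B_{\|\cdot\|}(x_n,s_n)$ cover $S_X$. If $r_n\le 1/\varepsilon$ for all $n$, then
\[
\|x_n\|-s_n\ge\frac{r_n}{1+\varepsilon}-r_n-\frac{\varepsilon}{1+\varepsilon}=-\frac{\varepsilon(r_n+1)}{1+\varepsilon}\ge-1 .
\]
Condition (2') of the $(-1)$-BCP is the non-strict inequality $\|x_n\|\ge s_n-1$. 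You were aiming for a strict one, which is why you wanted to ``tune the defect below $1$''. So this already contradicts the hypothesis that $(X,\|\cdot\|)$ fails the $(-1)$-BCP. The equality case $r_n=1/\varepsilon$ is exactly why the conclusion is the strict $\sup_n r_n>1/\varepsilon$. No $\eta$-net and no rescaling of centres is needed.

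Alternatively, use \cite[Proposition~2.3]{CLL2023}, as the paper does elsewhere, to pick $u\in S_X$ with $\|x_n+su\|=\|x_n\|+|s|$ for all $n$ and $s$. Then $y=-u/|u|\in S_{(X,|\cdot|)}$ satisfies $|y-x_n|\ge\|x_n\|+1/|u|\ge\frac{r_n+1}{1+\varepsilon}\ge r_n$, so $y$ is not covered.
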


In \cite[Example 3.1]{LuoZheng2021} the authors used $\ell_1[0,1]$ and \cite[Theorem~2.3]{CKWZ2010} to construct a Banach space with the BCP but without the SBCP. With the help of Proposition~\ref{prop: ball-coverings with big radius}, we can generalize and improve this result by constructing many dual Banach spaces $X^*$ with even $\bc (X^*)=1$ but without the SBCP (see Proposition~\ref{prop: property A does not imply SBCP}).

\section{Main results}\label{sec: main results}
In this section we study ball-covering properties of dual Banach spaces in terms of
the geometry of the predual unit ball. In \cite{GLM2019}, A.~J.~Guirao, A.~Lissitsin, and V.~Montesinos proved that if the unit ball of a separable Banach space is a closed convex hull of its strongly exposed points, then $\bc(X^*)=1$. Our first main result (see Theorem~\ref{thm: BCP in dual}) gives a precise dual
characterization of the optimal ball-covering index: if \(X\) is separable, then
\[
        B_X \text{ is an SCD set}
        \quad\Longleftrightarrow\quad
        \operatorname{bc}(X^*)=1 .
\]
We then show that this phenomenon is genuinely different from the ordinary
ball-covering property. Namely, there are equivalent dual norms for which the
dual space has the BCP but the corresponding value of \(\operatorname{bc}(\cdot)\) is strictly
smaller than one; equivalently, the predual unit ball is not SCD (see Proposition~\ref{prop:dual-bcp-but-bc-less-than-one}). 

After that we turn to uniform versions of the ball-covering property. We prove that if \(X\) has
a \(K\)-unconditional basis with \(K<2\), then \(X^*\) has the UBCP (see Proposition~\ref{prop:k-unconditional-dual-bcp}). The constant
\(2\) is sharp: using a recent construction of M.~L\~oo and Y.~Perreau \cite{LP2026}, we show that their
space is an example of a Banach space with a \(2\)-unconditional basis whose dual fails even the BCP (see Proposition~\ref{prop:two-unconditional-dual-no-bcp}).

Finally, we give a second source of dual spaces with the UBCP, based on
uniformly strongly exposed sets. If \(B_X\) is the closed convex hull of a set of
uniformly strongly exposed points, then \(X^*\) has the UBCP and
\(\operatorname{bc}(X^*)=1\) (see Theorem~\ref{thm: UBCP in dual}). This should be compared with Lindenstrauss property
A: for separable spaces, property A implies \(\operatorname{bc}(X^*)=1\), but we show
that it does not in general imply the SBCP (see Proposition~\ref{prop: property A does not imply SBCP}).

\subsection{BCP in dual spaces}

\begin{theorem}\label{thm: BCP in dual}
    Let $X$ be a separable Banach space. Then $B_X$ is an SCD set if and only if $\bc(X^*)=1$.
\end{theorem}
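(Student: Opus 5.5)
The plan is to translate between slices of $B_X$ and balls in $X^*$ that lie far from the origin. The whole argument rests on two elementary estimates, one for each direction.

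\emph{Step 1: $\bc(X^*)=1$ implies that $B_X$ is SCD.} Fix $\alpha_k\uparrow 1$ and apply Lemma~\ref{lem:homogeneous-alpha-bcp} to $X^*$ with $\alpha=\alpha_k$. This gives countable sets $A_k\subset X^*$ such that for every $g\in S_{X^*}$ there is $a\in A_k$ with $\|a\|-\|a-g\|>\alpha_k$; in particular $a\neq 0$. For $a\in\bigcup_k A_k$ and rational $\eta>0$, consider the slice $S(B_X,a,\eta)$. It is nonempty because $\sup a(B_X)=\|a\|$, and there are countably many such slices.

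Now let $S(B_X,g,\varepsilon)$ be an arbitrary slice with $\|g\|=1$. Choose $k$ and a rational $\eta$ with $\alpha_k-\eta\ge 1-\varepsilon$, and pick $a\in A_k$ as above. For $x\in S(B_X,a,\eta)$ we get
\[
g(x)=a(x)-(a-g)(x)>\|a\|-\eta-\|a-g\|>\alpha_k-\eta\ge 1-\varepsilon .
\]
Hence $S(B_X,a,\eta)\subset S(B_X,g,\varepsilon)$. By the equivalent form of Definition~\ref{def: SCD set} (every slice contains one of the countably many listed slices), $B_X$ is SCD.

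\emph{Step 2: $B_X$ SCD implies $\bc(X^*)=1$.} This is the direction I expect to need the most care. The key estimate is the following. Let $f\in S_{X^*}$, $\delta>0$, and suppose $g\in B_{X^*}$ satisfies $g\ge\beta$ on $S(B_X,f,\delta)$, where $\beta\in(0,1)$. If $R\delta\ge 1+\beta$, then $\|Rf-g\|\le R-\beta$.

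To see this, evaluate $(Rf-g)(x)$ for $x\in B_X$ in two cases.
\begin{itemize}
\item If $x\in S(B_X,f,\delta)$, then $(Rf-g)(x)\le R-\beta$, since $f(x)\le 1$ and $g(x)\ge\beta$.
\item If $x\notin S(B_X,f,\delta)$, then $f(x)\le 1-\delta$, so $(Rf-g)(x)\le R(1-\delta)+1\le R-\beta$.
\end{itemize}
Since $B_X$ is symmetric, bounding $\sup_{x\in B_X}(Rf-g)(x)$ already bounds $\|Rf-g\|$.

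Take the SCD slices $S_n=S(B_X,f_n,\delta_n)$ with $\|f_n\|=1$. Fix $\alpha<1$, choose a rational $\varepsilon>0$ and set $\beta=1-\varepsilon>\alpha$. Take $R_n$ with $R_n\delta_n\ge 2$, and consider the balls $B(R_nf_n,\,R_n-\beta')$ with $\alpha\le\beta'<\beta$. For the centres and radii we have $\|R_nf_n\|=R_n=(R_n-\beta')+\beta'$, so the $\beta'$-BCP requirement holds.

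These balls cover $S_{X^*}$. Given $g\in S_{X^*}$, the slice $S(B_X,g,\varepsilon)$ contains some $S_n$, so $g>1-\varepsilon=\beta$ on $S_n$. The estimate then gives $\|R_nf_n-g\|\le R_n-\beta<R_n-\beta'$.

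Thus $X^*$ has the $\beta'$-BCP, hence the $\alpha$-BCP, for every $\alpha<1$. Lemma~\ref{lem:basic-implications}(d) then yields $\bc(X^*)=1$. Only countably many balls are used, since $n$ ranges over $\mathbb N$ and $\varepsilon$ can be fixed once $\alpha$ is given.

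Separability of $X$ plays no role beyond ensuring the framework is meaningful. Consistently with Lemma~\ref{lem:basic-implications}(a), the SCD property itself supplies the countable data needed in Step 2. The main points to check carefully are the norm estimate $\|Rf-g\|\le R-\beta$, which uses symmetry of $B_X$ and a large $R$, and the strictness of the inequalities. The latter is handled by taking $\beta'<\beta$.
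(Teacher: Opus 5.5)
Your proof is correct and follows essentially the same route as the paper. For $\bc(X^*)=1\Rightarrow$ SCD, you build the countable family of slices from the centres of the $\alpha_k$-BCP balls with rational thresholds. Your detour through Lemma~\ref{lem:homogeneous-alpha-bcp} is harmless but unnecessary, since the covering balls themselves already give $\|a\|-\|a-g\|>\alpha_k$. For SCD $\Rightarrow\bc(X^*)=1$, you cover $S_{X^*}$ by the same balls $B(R_nf_n,R_n-\alpha)$ with $R_n\delta_n$ large. The difference there is only packaging: you use the slice-containment form of the SCD definition with a direct two-case bound on $\|R_nf_n-g\|$, while the paper argues by contradiction via the selection form $B_X=\overline{\conv}\{b_n\}$, and both rest on the same estimate.
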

\begin{proof}
    Assume first that $B_X$ is an SCD set, hence there is a sequence of slices $\{S_n\colon n\in\mathbb N\}$ of $B_X$, where $S_n:=\{x\in B_X\colon f_n(x)>1-\gamma_n\}$ for some $f_n\in S_{X^*}$ and $\gamma_n>0$. 

    Let $\alpha\in[0,1)$. We will show that $X^*$ has the $\alpha$-BCP. Choose $\varepsilon>0$ so that $\alpha+\varepsilon<1$. For each \(n\), we  choose a large enough natural number \(m_n\) so that 
\begin{equation}\label{eq: choosing m_n}
    \frac{1+\alpha+\varepsilon}{m_n}<\gamma_n.
\end{equation}

 We claim that
\begin{equation}\label{eq: covering the dual unit sphere}
S_{X^*} \subset \bigcup_{n=1}^{\infty} B\left(m_nf_n,\,m_n-\alpha\right).
\end{equation}    
Suppose for the sake of contradiction that there exists some functional
\(f \in S_{X^*}\) such that
for every \(n\):
\[
\|m_n f_n - f\| \ge m_n - \alpha.
\]
Thus, for every \(n\), there must exist some point \(b_n \in B_X\)
such that:
\begin{equation}\label{eq: b_n norms m_n f_n - f}
(m_n f_n - f)(b_n)
> m_n - \alpha - \varepsilon.
\end{equation}
Because \(f \in S_{X^*}\) and \(b_n \in B_X\), we know that
$ f(b_n) \ge -1,$ hence \eqref{eq: b_n norms m_n f_n - f} implies that
\[
 f_n(b_n)
> \frac{1}{m_n}\left(m_n - \alpha -\varepsilon +f(b_n)\right)\geq 1-\frac{1+\alpha+\varepsilon}{m_n}>1-\gamma_n.
\]
Therefore, $b_n \in S_n$. On the other hand, since $f_n(b_n)\leq 1$, we 
get from \eqref{eq: b_n norms m_n f_n - f} that
\[
 f(b_n) < \alpha +\varepsilon.
\]
We have constructed a sequence of points
$\{b_n\}_{n=1}^{\infty}\subset B_X$ such that \(b_n\in S_n\) for every \(n\).
Because the sequence of slices \(\{S_n\}\) is determining for the set
\(B_X\), 
\[
B_X=\overline{\conv}(\{b_n\colon n\in\mathbb N\}).
\]
However, this leads to a contradiction, because
\[
1=\|f\|=\sup_{x\in B_X}f(x)=
\sup_{n\in \mathbb N} f(b_n)\leq \alpha+\varepsilon<1.
\]
Thus, \eqref{eq: covering the dual unit sphere} holds, and \(X^*\) has the \(\alpha\)-BCP. Since \(\alpha \in [0,1)\) was arbitrary, $\bc(X^*) = 1$.

Assume now that \(\bc(X^*)=1\). For each \(n\in\mathbb N\), put
\[
        \alpha_n:=1-\frac1n.
\]
Since \(X^*\) has the \(\alpha_n\)-BCP, there exist
\(f_{n,j}\in X^*\) and \(r_{n,j}>0\), \(j\in\mathbb N\), such that
\[
        S_{X^*}\subset
        \bigcup_{j=1}^{\infty} B(f_{n,j},r_{n,j})
\]
and
\[
        r_{n,j}+\alpha_n\leq \|f_{n,j}\|
        \qquad(j\in\mathbb N).
\]
For \(n,j\in\mathbb N\) and \(q\in\mathbb Q\) with
\(q<\|f_{n,j}\|\), define
\[
        S_{n,j,q}:=\{x\in B_X:f_{n,j}(x)>q\}.
\]
We claim that the countable family
\[
        \mathcal S:=
        \{S_{n,j,q}:n,j\in\mathbb N,\ q\in\mathbb Q,\
        q<\|f_{n,j}\|\}
\]
is determining for \(B_X\).
Let
\[
        S:=\{x\in B_X:g(x)>a\}
\]
be an arbitrary slice of \(B_X\), where \(g\in S_{X^*}\) and \(a<1\).
Choose \(n\in\mathbb N\) such that
\(
        a<\alpha_n.
\)
Since the balls \(B(f_{n,j},r_{n,j})\) cover \(S_{X^*}\), there is
\(j\in\mathbb N\) such that
\[
        \|g-f_{n,j}\|<r_{n,j}.
\]
Moreover,
\[
        r_{n,j}+a<r_{n,j}+\alpha_n\leq\|f_{n,j}\|.
\]
Choose \(q\in\mathbb Q\) such that
\[
        r_{n,j}+a<q<\|f_{n,j}\|.
\]
If \(x\in S_{n,j,q}\), then
\[
        g(x)
        =
        f_{n,j}(x)+(g-f_{n,j})(x)
        >
        q-\|g-f_{n,j}\|
        >
        a.
\]
Thus \(S_{n,j,q}\subset S\). Hence every slice of \(B_X\) contains a
member of \(\mathcal S\), so \(B_X\) is an SCD set.

\end{proof}

\begin{remark}
In \cite{LLMRZ2024}, the notion of an SCD-point was introduced. By carefully analyzing the proof of Theorem~\ref{thm: BCP in dual}, a similar argument shows that: if $X^*$ has the BCP, then $0$ is an SCD-point of $B_X$. 
\end{remark}

We will now show that there are separable Banach spaces $X$ such that $X^*$ has the BCP, but $B_X$ is not an SCD set. It is known that every Banach space with the Daugavet property is such that its dual space fails the $(-1)$-BCP \cite[Lemma 5.1]{CLL2023}, hence the next proposition can be applied to, e.g. $C[0,1]$ or $L_1[0,1]$.

\begin{proposition}\label{prop:dual-bcp-but-bc-less-than-one}
Let $X$ be a separable Banach space such that $X^*$ fails the $(-1)$-BCP. Then there exists an equivalent norm $\vertiii{\cdot}$ on $X$ such that $(X^*, \vertiii{\cdot}^*)$ has the BCP, but \[\operatorname{bc}((X^*, \vertiii{\cdot}^*))<1.
\]
In particular, $B_{(X, \vertiii{\cdot})}$ is not an SCD set.
\end{proposition}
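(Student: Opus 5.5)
The plan is to renorm so that the new dual norm is very close to the original one, and then to play two facts against each other. Closeness lets us transfer good coverings back to $(X^*,\|\cdot\|)$, which is impossible. At the same time, the new dual norm is constructed so that it does admit some covering.

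\textbf{Step 1 (renorming with the BCP).} Since $X$ is separable, $X^*$ is $w^*$-separable. Fix $\varepsilon>0$ small and a dense sequence $(x_k)\subset S_X$. I would construct an equivalent norm $\vertiii{\cdot}$ on $X$ whose dual norm satisfies
\[
\|f\|\leq \vertiii{f}^*\leq (1+\varepsilon)\|f\|\qquad (f\in X^*),
\]
and for which $(X^*,\vertiii{\cdot}^*)$ has the BCP. The natural route is a dual version of the Fonf--Zanco renorming \cite{FZ2009}. The new ball should be $w^*$-closed, so that the norm is a dual norm; equivalently, we change the predual ball. A concrete candidate is
\[
\vertiii{f}^*:=\max\Bigl\{\|f\|,\ \sup_k (1+\varepsilon_k)|f(x_k)|\Bigr\}
\]
with $\varepsilon_k\downarrow 0$ and $\varepsilon_k\leq\varepsilon$. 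This is $w^*$-lower semicontinuous, so it is a dual norm. The covering balls would be centered at $\pm m\,\delta_k$-type points, that is, at functionals of the form $m g_k$, where $g_k$ almost attains $(1+\varepsilon_k)|g_k(x_k)|$. The point of the strict factor $1+\varepsilon_k$ is this: on the new dual sphere, either some coordinate term $(1+\varepsilon_k)|f(x_k)|$ essentially realizes the norm, or density of $(x_k)$ lets us pass to such a $k$. One then checks, as in the first half of the proof of Theorem~\ref{thm: BCP in dual}, that $B(mg_k,m)$ catches $f$ for suitable rational $m$. I expect this to be the main obstacle. If this explicit formula does not work, I would fall back on adapting the Fonf--Zanco argument while keeping the ball $w^*$-closed.

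\textbf{Step 2 (transfer of good coverings).} Suppose $(X^*,\vertiii{\cdot}^*)$ had the $\alpha$-BCP with $\alpha$ close to $1$. I would first normalize the covering using Lemma~\ref{lem:homogeneous-alpha-bcp}. This provides a countable set $A$ such that every nonzero $f$ has some $a\in A$ with $\vertiii{a}^*-\vertiii{a-f}^*>\alpha\vertiii{f}^*$. This inequality is homogeneous in $f$, which avoids the large-radius problem. Using the $(1+\varepsilon)$-equivalence, we then get, for $\|f\|=1$,
\[
\|a\|-\|a-f\|\geq \frac{\vertiii{a}^*}{1+\varepsilon}-\vertiii{a-f}^*\geq \alpha-\varepsilon\,\frac{\vertiii{a}^*}{1+\varepsilon}.
\]
The factor $\vertiii{a}^*$ in the error term is again the difficulty. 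To control it, I would restrict to $a$ with $\vertiii{a}^*$ bounded, say by $C/\varepsilon$. I would justify this by shrinking $a$ toward $f$ along the segment, using convexity of $t\mapsto\|ta\|-\|ta-f\|$. With such a bound, each $f\in S_{X^*}$ lies in some $\|\cdot\|$-ball $B(a,\|a\|-\beta)$ with $\beta>-1$ once $\alpha$ is close enough to $1$ and $\varepsilon$ is small. This is a $(-1)$-BCP for $(X^*,\|\cdot\|)$, contradicting the hypothesis. Hence $\operatorname{bc}((X^*,\vertiii{\cdot}^*))<1$.

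\textbf{Step 3 (the SCD conclusion).} The statement about SCD then follows from Theorem~\ref{thm: BCP in dual}, applied to the separable space $(X,\vertiii{\cdot})$.
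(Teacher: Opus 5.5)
There is a genuine gap in Step 2, and it cannot be fixed within your strategy. Step 2 uses only the equivalence $\|\cdot\|\le\vertiii{\cdot}^*\le(1+\varepsilon)\|\cdot\|$ to conclude that the $\alpha$-BCP of $(X^*,\vertiii{\cdot}^*)$ for $\alpha$ near $1$ forces the $(-1)$-BCP of $(X^*,\|\cdot\|)$. That implication is false. Take the renormings $Y_n$ of $C[0,1]$ from Lemma~\ref{lemma: C[0,1] renorming alpha}, as used in Proposition~\ref{prop: property A does not imply SBCP}. Their dual norms are $(1+\frac1n)$-equivalent to the usual norm of $C[0,1]^*$, which fails the $(-1)$-BCP. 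Yet $\bc(Y_n^*)=1$ by property $\alpha$ and Theorem~\ref{thm: UBCP in dual}. So closeness to the original norm can never by itself give $\bc<1$.

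The step that concretely breaks is the bound $\vertiii{a}^*\lesssim C/\varepsilon$ on the centers. The map $t\mapsto t\vertiii{a}^*-\vertiii{ta-f}^*$ is concave and nondecreasing, not convex. Replacing $a$ by $ta$ with $t<1$ therefore only decreases the gap. Moving the center towards $f$ instead makes it depend on $f$, which destroys countability. By Proposition~\ref{prop: ball-coverings with big radius}, some centers must have norm above $1/\varepsilon$, so your error term $\frac{\varepsilon}{1+\varepsilon}\vertiii{a}^*$ is genuinely of order one. The example above shows it cannot be kept below $1+\alpha$. Step 1 is also unverified: the $\max$ with $\|f\|$ in your formula is redundant, and nothing yet shows that this dual has the BCP.

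The missing idea is to use the \emph{structure} behind the failure of the $(-1)$-BCP. The new norm should be close to a good norm rather than to the bad one, while keeping a \emph{fixed} proportion of the original norm. The paper takes a dual norm $\|\cdot\|_g$ with $\bc(X^*,\|\cdot\|_g)=1$ \cite[Theorem~15]{GLM2019}, $C$-equivalent to $\|\cdot\|_0$. It then sets $\vertiii{\cdot}^*=\lambda\|\cdot\|_g+(1-\lambda)\|\cdot\|_0$ with $\lambda\in\bigl(\frac{C}{C+\alpha},1\bigr)$.

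For the BCP, apply Lemma~\ref{lem:homogeneous-alpha-bcp} to $\|\cdot\|_g$ and use the crude bound $\|a\|_0-\|a-y\|_0\ge -C\|y\|_g$. This gives $\vertiii{a}^*-\vertiii{a-y}^*>(\lambda\alpha-(1-\lambda)C)\|y\|_g>0$.

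For $\bc<1$, let $B$ be any countable set of centers. By \cite[Proposition~2.3]{CLL2023} there is $u$ with $\|u\|_0=1$ and $\|b+su\|_0=\|b\|_0+|s|$ for all $b\in B$ and $s\in\mathbb R$. Then $z=u/\vertiii{u}^*$ satisfies, for every $b\in B$,
$\vertiii{b}^*-\vertiii{b-z}^*\le\lambda\|z\|_g-(1-\lambda)\|z\|_0\le\frac{\lambda C-(1-\lambda)}{\lambda C+(1-\lambda)}<1$.
The fixed weight $1-\lambda$ is what keeps this $\ell_1$-direction effective uniformly over the whole countable family. Your Step 3 is fine once such a norm is available.
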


\begin{proof}
Let \(\|\cdot\|_0\) denote the original norm on $X^*$. By \cite[Theorem 15]{GLM2019}, there exists an equivalent dual norm
\(\|\cdot\|_g\) on \(X^*\) such that
\[
        \operatorname{bc}(X^*,\|\cdot\|_g)=1.
\]
Choose \(C\geq 1\) such that
\[
        \frac1C\|\cdot\|_g\leq \|\cdot\|_0\leq C\|\cdot\|_g.
\]
Fix \(\alpha\in(0,1)\) and choose
\(
        \lambda\in \left(\frac{C}{C+\alpha},1\right).
\)
For each \(x^*\in X^*\), define
\[
\vertiii{x^*}^*:
=
\lambda\|x^*\|_g
+
(1-\lambda)\|x^*\|_0.
\]
Since \(\|\cdot\|_0\) and \(\|\cdot\|_g\) are dual norms on the same dual
pair \((X^*,X)\), their convex combination \(\vertiii{\cdot}^*\) is also a dual norm. Thus $Y:=(X^*,\vertiii{\cdot}^*)$ is a dual Banach space with a separable predual.

We first show that \(Y\) has the BCP. Since
\(
        \operatorname{bc}(X^*,\|\cdot\|_g)=1,
\)
Lemma~\ref{lem:homogeneous-alpha-bcp}, applied to
\((X^*,\|\cdot\|_g)\), gives a countable set
\(A\subset X^*\) such that for every
\(y\in S_{(X^*,\vertiii{\cdot}^*)}\) there is \(a\in A\) satisfying
\begin{equation}\label{eq: alpha-BCP}
    \|a\|_g-\|a-y\|_g>\alpha\|y\|_g.
\end{equation}
For such \(a\) and \(y\), we also have
\begin{equation}\label{eq: equivalent norms}
        \|a\|_0-\|a-y\|_0
        \geq -\|y\|_0
        \geq -C\|y\|_g.
\end{equation} 
Hence, by \eqref{eq: alpha-BCP} and \eqref{eq: equivalent norms},
\[
\begin{aligned}
\vertiii{a}^*-\vertiii{a-y}^*
        &=
        \lambda\bigl(\|a\|_g-\|a-y\|_g\bigr)
        +(1-\lambda)\bigl(\|a\|_0-\|a-y\|_0\bigr)  \\
        &>
        \bigl(\lambda\alpha-(1-\lambda)C\bigr)\|y\|_g\\
        &>
        0.
\end{aligned}
\]
Therefore
\(
        y\in B(a,\vertiii{a}^*).
\)
Thus
\[
        S_Y\subset \bigcup_{a\in A} B(a,\vertiii{a}^*),
\]
and so \(Y\) has the BCP.

It remains to prove that \(\operatorname{bc}(Y)<1\). Let
\(B\subset X^*\) be an arbitrary countable set. Since $(X^*,\|\cdot\|_0)$ fails the $(-1)$-BCP, by
\cite[Proposition~2.3]{CLL2023} there exists \(u\in X^*\) with
\(\|u\|_0=1\) such that, for every \(b\in B\) and \(s\in\mathbb R\),
\begin{equation}\label{eq: fail -1-BCP}
\|b+su\|_0=\|b\|_0+|s|.
\end{equation}
Let \(z:=u/\vertiii{u}^*\). For every \(b\in B\), by the triangle inequality and by \eqref{eq: fail -1-BCP}, we have
\[
\begin{aligned}
        \vertiii{b}^*-\vertiii{b-z}^*
        &=
        \lambda\bigl(\|b\|_g-\|b-z\|_g\bigr)
        +(1-\lambda)\bigl(\|b\|_0-\|b-z\|_0\bigr)  \\
        &\leq
        \lambda\|z\|_g-(1-\lambda)\|z\|_0  \\
        &=
        \frac{\lambda\|u\|_g-(1-\lambda)}
             {\lambda\|u\|_g+(1-\lambda)}\\
    &\leq \frac{\lambda C-(1-\lambda)}
             {\lambda C+(1-\lambda)}.
\end{aligned}
\]
Therefore 
\[
        \bc(Y)
        \leq
        \frac{\lambda C-(1-\lambda)}
             {\lambda C+(1-\lambda)}
        <1.
\]

Finally, that the unit ball of the separable predual $Y_*$ is not an SCD set follows immediately from Theorem \ref{thm: BCP in dual}, because $\bc (Y)<1$.
\end{proof}


\subsection{UBCP in dual spaces}

V.~Kadets, M.~Mart\'in, J.~Mer\'i, and D.~Werner proved that the unit ball is an SCD set for every Banach space with a 1-unconditional basis \cite[Theorem~3.1]{KMMW2013}. On the other hand, M.~L{\~o}o and Y.~Perreau very recently showed that there exists a Banach space with a 2-unconditional basis whose unit ball is not an SCD set \cite[Theorem~3.11]{LP2026}. Hence, the unconditional basis constant is important to determine whether the unit ball is an SCD set. Moreover, it is unknown whether the unit ball of a Banach space with a \(K\)-unconditional basis, where $K\in(1,2)$, is an SCD set \cite[Question~5.4]{LP2026}. However, we can show that the dual of such a space has the UBCP, which is a consequence of the following result.

\begin{proposition}\label{prop:k-unconditional-dual-bcp}
If \(X\) is a Banach space with a \(K\)-unconditional basis
\((e_n)\), where \(K\in[1,2)\), then \(X^*\) has the UBCP.
\end{proposition}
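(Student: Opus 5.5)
The plan is to cover $S_{X^*}$ by balls whose centers are (approximately) $2P_N^*f$, where $P_N$ is the $N$-th basis projection. The point is that the ``reflected'' functional $2P_N^*f-f=P_N^*f-(I-P_N)^*f$ is just $f$ composed with a sign change of the basis, so unconditionality controls its norm by $K$. Meanwhile the center has norm close to $2$, and $K<2$ leaves a uniform gap.

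\emph{Setup.} Let $(e_n)$ be $K$-unconditional with $K<2$, and let $P_N$ be the basis projections with $Q_N:=I-P_N$. For $x=\sum a_ne_n$ the vector $P_Nx-Q_Nx$ has coefficients of the same modulus as those of $x$, so $\|P_Nx-Q_Nx\|\le K\|x\|$ by \eqref{eq: uncond basis} (after a standard density argument from $c_{00}$). Dually, $\|P_N^*f-Q_N^*f\|\le K\|f\|$ for every $f\in X^*$. Each $P_N^*X^*$ is finite dimensional, so we fix a countable dense set $D_N\subset P_N^*X^*$ and put $D:=\bigcup_N D_N$, which is countable. Fix $\delta>0$ small, say with $2(1-\delta)-K-3\delta>0$, and set
\[
\alpha:=\tfrac12\bigl(2(1-2\delta)-K-2\delta\bigr)>0 .
\]
The proposed covering consists of the balls $B(2h,\,K+\delta)$ for $h\in D$ with $\|2h\|\ge K+\delta+\alpha$.

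\emph{Covering step.} Let $f\in S_{X^*}$. Pick $x\in B_X$ with $f(x)>1-\delta$, and then $N$ with $\|Q_Nx\|<\delta$. Then
\[
\|P_N^*f\|\ge f(P_Nx)>1-2\delta .
\]
Choose $h\in D_N$ with $\|h-P_N^*f\|<\delta/2$. Then
\[
\|2h-f\|\le 2\|h-P_N^*f\|+\|P_N^*f-Q_N^*f\|<\delta+K,
\]
and
\[
\|2h\|>2(1-2\delta)-\delta\ge K+\delta+\alpha
\]
by the choice of $\alpha$ (adjusting $\delta$ slightly if needed). So $f$ lies in an admissible ball. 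The radii are all equal to $K+\delta$, hence bounded, and the centers satisfy $\|x_n\|\ge r_n+\alpha$. This gives the $(K+\delta,\alpha)$-UBCP.

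\emph{Main obstacle.} The only point needing care is that the natural candidate centers must come from a \emph{countable} set while still tracking $f$. Norm-approximating $f$ itself is impossible, since $X^*$ may be nonseparable. Truncating to $P_N^*f$ solves this: the truncations live in finite-dimensional spaces, and by choosing $N$ adapted to a norming vector for $f$ we keep $\|P_N^*f\|$ close to $1$. The remaining verification is just the inequality $\|P_N^*f-Q_N^*f\|\le K$. This is exactly where $K<2$ is used: the center has norm about $2$ and the radius about $K$.
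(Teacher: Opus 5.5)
Your proof is correct and is essentially the paper's argument: the centers are $2v$ with $v$ taken from countable dense subsets of the finite-dimensional spaces $P_N^*X^*$, the key estimate is $\|f-2P_N^*f\|=\|(I-2P_N)^*f\|\le K$, and $K<2$ supplies the gap. The only differences are cosmetic. The paper uses radii $2\|v\|-\eta$ and bounds $\|P_N^*\|\le\frac{1+K}{2}$ to keep those radii bounded, whereas your constant radius $K+\delta$ makes that bound unnecessary.

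There is one small arithmetic slip. Both $\alpha>0$ and the inequality $2-5\delta\ge K+\delta+\alpha$ require $2-K\ge 6\delta$ (strictly, for $\alpha>0$). Your stated condition $2-K>5\delta$ is slightly too weak, so simply take $\delta<(2-K)/6$.
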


\begin{proof}
Let \((e_n)\) be a \(K\)-unconditional basis of \(X\), and for \(N\in\mathbb N\) let
\[
P_Nx:=\sum_{n=1}^N e_n^*(x)e_n
\]
be the natural projections associated with the basis.
Since the basis is \(K\)-unconditional, for every \(N\in\mathbb N\) we have
\[
        \|I-2P_N^*\|
        =
        \|I-2P_N\|
        \le K .
\]
Therefore, $\|P_N^*\|\le \frac{1+K}{2}$ for every $N\in\mathbb N$. 
Since $P_N^*X^*$ is finite-dimensional, the set
\(
P_N^*X^*\cap \frac{1+K}{2}B_{X^*}
\)
is separable. Hence, it contains a countable dense subset, which
we denote by $D_N$.
Put
\[
D:=\bigcup_{N=1}^\infty D_N.
\]
Fix $\eta\in (0,2-K)$. We claim that 
\begin{equation}\label{eq: UBCP for K-unc}
S_{X^*}\subset \bigcup_{\substack{v\in D\\ 2\|v\|>K+\eta}} B\left(2v,2\|v\|-\eta\right).
\end{equation}
Indeed, let $f\in S_{X^*}$. Since $\sup_N \|P_N^*f\|\ge 1$, we can choose $N\in \mathbb N$ such that 
\[
\|P_N^*f\|>\frac{K+\eta}{2}.
\]
Now we have
\[
\|f-2P_N^*f\|\le K<2\|P_N^*f\|-\eta.
\]
Observe that
\[
P^*_Nf\in P_N^*X^*\cap \left(\frac{1+K}{2}\right)B_{X^*}.
\]

By the density of $D_N$ in $P_N^*X^*\cap \left(\frac{1+K}{2}\right)B_{X^*}$, we can choose $v\in D_N$ sufficiently close to $P_N^*f$ so that

\[
\|v\|>\frac{K+\eta}{2}\qquad \text{and} \qquad \|f-2v\|<2\|v\|-\eta.
\]

Hence, $f\in B(2v, 2\|v\| - \eta)$ for this $v \in D$ and \eqref{eq: UBCP for K-unc} holds. Finally, since 
\[
2\|v\|-\eta \leq 1+K-\eta,
\]
then \(X^*\) has $(1+K-\eta,\eta)$-UBCP.
\end{proof}


We will now show that Proposition~\ref{prop:k-unconditional-dual-bcp} is sharp in the sense that the constant \(2\) cannot be included in its assumption. The following example is exactly the same space as in \cite[Theorem~3.11]{LP2026}, where the authors showed that its unit ball fails to be SCD. We show that the dual of this space even fails the BCP.

\begin{proposition}\label{prop:two-unconditional-dual-no-bcp}
There exists a Banach space with a \(2\)-unconditional basis whose dual fails the BCP.
\end{proposition}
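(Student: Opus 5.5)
The plan is to work with the space \(X\) of \cite[Theorem~3.11]{LP2026} itself and to violate the covering directly. We avoid going through the SCD machinery, because failure of SCD only gives \(\operatorname{bc}(X^*)<1\) by Theorem~\ref{thm: BCP in dual}, which is too weak. The target reformulation is the following. \(X^*\) fails the BCP as soon as, for every sequence \((f_n)\subset X^*\setminus\{0\}\), there is \(g\in S_{X^*}\) with
\[
\|f_n-g\|\geq \|f_n\| \qquad (n\in\mathbb N).
\]
Indeed, any admissible ball \(B(f_n,r_n)\) with \(r_n\le\|f_n\|\) then misses \(g\). By weak\(^*\)-lower semicontinuity of the dual norm and a standard perturbation, it is enough to produce, for each \(n\) and each \(\varepsilon>0\), points \(x_n\in B_X\) with \(f_n(x_n)>\|f_n\|-\varepsilon_n\) and \(g(x_n)\le -1+\varepsilon_n\), where \(\varepsilon_n\to 0\) is fixed in advance. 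The natural normalisation is to take the \(f_n\) with finite support in the dual basis \((e_k^*)\). We may do this since \((e_k^*)\) is a basis of \(X^*\): the basis of \(X\) is unconditional and \(X\) contains no \(\ell_1\). This last fact has to be checked from the construction in \cite{LP2026}. If it fails, we would instead use the finite-support functionals \(P_N^*f_n\), which approximate \(f_n\) weak\(^*\), together with the bound \(\|P_N^*\|\le 3/2\).

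The key step is an inductive gliding-hump construction of \(g\) as a weak\(^*\) limit of finitely supported functionals \(g_1,g_2,\dots\) placed on successive blocks of coordinates beyond the supports of \(f_1,\dots,f_n\) (after truncation). At stage \(n\) we choose a norming vector \(y_n\) for \(f_n\) supported on an initial segment. We then exploit the specific feature of the Lõo--Perreau norm that witnesses failure of SCD. In their construction, for every finite initial segment there are far-out block vectors \(z\) such that \(\|y_n - z\|\) stays essentially at most \(1\) while a norm-one functional \(g\) supported far out has \(g(z)\approx 1\). This is the ``\(2\)-unconditional'' defect: sign changes can double the norm but not increase it further. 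Setting \(x_n:=y_n-z\), we get \(f_n(x_n)\approx\|f_n\|\), since \(f_n\) does not see \(z\), and \(g(x_n)\approx -1\). Hence \((f_n-g)(x_n)\approx\|f_n\|+1\), and therefore \(\|f_n-g\|\geq\|f_n\|\).

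The main obstacle is making \(g\) a single functional that works simultaneously for all \(n\). The humps \(g_k\) must be arranged so that \(\|g\|=1\), while the later humps \(g_k\), \(k>n\), do not destroy the estimate \(g(x_n)\approx-1\). I would handle this by choosing \(z_n\) inside the block on which the \(n\)-th hump lives, so that \(g(x_n)=g_n(-z_n)\) exactly, and by distributing the mass as \(g=\sum_k g_k\) with the \(g_k\) on disjoint blocks. What then has to be verified is the upper estimate \(\|\sum_k g_k\|\le 1\). This would need to come from the dual of the Lõo--Perreau norm: I expect it to be essentially an \(\ell_\infty\)-type behaviour on far-separated blocks in \(X^*\), mirroring the \(\ell_1\)-type lower behaviour of their norm used to kill SCD. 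If only \(\|g\|\le C\) can be obtained, we would rescale and replace the estimate by a quantitative version of the same inequality. I expect proving this dual block estimate, together with the existence of the vectors \(z_n\) with \(\|y_n-z_n\|\le 1+\varepsilon_n\), to be the genuinely technical part; both must be extracted from the details of \cite[Theorem~3.11]{LP2026}.
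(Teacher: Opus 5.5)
\textbf{There is a genuine gap: the estimate you aim for is false in this space.}

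\emph{The target inequality.} Your sufficient condition asks for points $x_n$ in the unit ball with $f_n(x_n)>\|f_n\|-\varepsilon_n$ and $g(x_n)\le -1+\varepsilon_n$. This forces $\|f_n-g\|\ge\|f_n\|+1-2\varepsilon_n$ for every $n$. That is an $\ell_1$-type estimate, far stronger than what failure of the BCP requires.

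Write $h(A):=\sum_{t\in A}h(e_t)$. The dual norm of the L\~oo--Perreau space is $\|h\|=\sup\{|h(A)|: A\subset T \text{ a finite antichain}\}$. The root $\emptyset$ is comparable with every node, so $\{\emptyset\}$ is the only antichain containing it. Hence, for every $g$ in the dual unit ball,
\[
\|e^*_\emptyset\mp g\|=\max\{|1\mp g(e_\emptyset)|,\ \sup_{A\not\ni\emptyset}|g(A)|\}.
\]
So the two centres $f_1=e^*_\emptyset$ and $f_2=-e^*_\emptyset$ would require both $g(e_\emptyset)\le -1+2\varepsilon_1$ and $g(e_\emptyset)\ge 1-2\varepsilon_2$, which is impossible.

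\emph{The claimed feature of the norm.} The same computation refutes it. If $z$ and a norm-one $g$ are supported far out with $g(z)\approx 1$, then $e^*_\emptyset-g$ has dual norm at most $1$. Hence $\|e_\emptyset-z\|\ge 1+g(z)\approx 2$, not $\approx 1$.

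\emph{The preliminary reduction.} This fails too. On each branch the tree norm $\sup_C\sum_{t\in C}|x(t)|$ is exactly the $\ell_1$-norm, and the $2$-unconditional renorming is $2$-equivalent to it. So the space contains $\ell_1$, the dual basis is not a basis of the dual, and the dual contains branch functionals $b_\beta$ of infinite support. The countable family $\{b_\beta:\beta\text{ eventually zero}\}$ has supports covering all of $T$, so there is no region ``beyond the supports'' for a gliding hump. Your fallback via $P_N^*$ does not help either: weak$^*$ lower semicontinuity bounds $\|f_n-g\|$ from above, not from below.

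\textbf{What is missing.} You should aim only at $\|f_n-g\|\ge\|f_n\|$, obtained by cancellation rather than addition, and replace the gliding hump by a counting argument over branches.
\begin{itemize}
\item For each $h$ in the dual, only countably many branches $\beta$ satisfy $\limsup_k|h(e_{\beta(k)})|>0$. Indeed, pairwise incomparable nodes $t_i$ on $m$ such branches with $|h(e_{t_i})|>\delta$ form an antichain; splitting it by signs gives $m\delta\le 2\|h\|$.
\item There are uncountably many branches, so you can pick $\beta\neq\gamma$ along which every $f_n$ tends to $0$, and put $g=b_\beta-b_\gamma$.
\item Each branch meets an antichain at most once, so $g(A)=|A\cap\beta|-|A\cap\gamma|\in\{-1,0,1\}$. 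Hence $\|g\|=1$, with equality at a node of $\beta\setminus\gamma$.
\item If $g(A)=\pm1$, add to $A$ a node of the other branch, arbitrarily far out. This keeps $A$ an antichain, makes the $g$-sum $0$, and changes $f_n(A)$ by an arbitrarily small amount.
\item Hence $|f_n(A)|\le\|f_n-g\|$ for all finite antichains $A$, that is, $\|f_n\|\le\|f_n-g\|$. So no admissible ball $B(f_n,r_n)$ with $r_n\le\|f_n\|$ contains $g$.
\end{itemize}
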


\begin{proof}
We follow the notation and construction found in \cite[Section~3]{LP2026}. Let
\[
    T:=\{\emptyset\}\cup\bigcup_{n=1}^{\infty}\{0,1\}^n
\]
be the infinite binary tree. We write \(s\preceq t\) if \(s\) is an initial
segment of \(t\). A branch is a maximal chain, and an antichain is a set of
pairwise incomparable nodes. Let \(X:=X_T\) be the completion of \(c_{00}(T)\)
with respect to the norm
\[
\|x\|_X
=
\sup\left\{
\sum_{t\in C}|x(t)|:
C\subset T \text{ is a chain}
\right\}.
\]
Let \(B_X^+:=B_X\cap X^+\), and let \(Y:=(X,|||\cdot|||)\), where
\(
    B_Y:=\cconv(B_X^+\cup(-B_X^+)).
\)
By \cite[Theorem~3.11]{LP2026}, \(Y\) has a \(2\)-unconditional basis.

We show that \(Y^*\) fails the BCP. By \cite[Lemma~3.8]{LP2026}, we have
\[
    B_X^+=\cconv(\Sigma^+),
\]
where
\[
\Sigma^+:
=
\left\{
\sum_{t\in A}e_t:
A\subset T \text{ is a finite antichain}
\right\}.
\]
Hence, for every \(h\in Y^*\),
\begin{equation}\label{eq:dual-tree-norm}
\|h\|_{Y^*}=\sup_{x\in B_X^+}|h(x)|=\sup_{u\in \Sigma^+}|h(u)|=\sup\left\{
\left|\sum_{t\in A}h(e_t)\right|:A\subset T \text{ is a finite antichain}\right\}.
\end{equation}
We first note that, for every \(h\in Y^*\), the set of branches \(\beta\) such that
\[
    \limsup_{k\to\infty}|h(e_{\beta(k)})|>0
\]
is at most countable. Indeed, fix \(\delta>0\). If
\(\beta_1,\ldots,\beta_m\) are distinct branches with
\[
    \limsup_{k\to\infty}|h(e_{\beta_i(k)})|>\delta
    \qquad(i=1,\ldots,m),
\]
then we may choose pairwise incomparable nodes \(t_i\in\beta_i\) such that
\[
    |h(e_{t_i})|>\delta
    \qquad(i=1,\ldots,m).
\]
Splitting the antichain \(\{t_1,\ldots,t_m\}\) according to the signs of
\(h(e_{t_i})\), and using \eqref{eq:dual-tree-norm}, we get
\[
    m\delta
    <
    \sum_{i=1}^m |h(e_{t_i})|
    \le
    2\|h\|_{Y^*}.
\]
Thus, for fixed \(\delta>0\), there are only finitely many such branches.
Taking \(\delta\in\mathbb Q\), \(\delta>0\), proves the claim.

Now let \((g_n)\subset Y^*\) be an arbitrary countable family. By the claim,
we may choose two distinct branches \(\beta\) and \(\gamma\) such that, for every
\(n\in\mathbb N\),
\[
    g_n(e_t)\to0
    \quad\text{as }t\in\beta,\ |t|\to\infty,
    \qquad
    g_n(e_t)\to0
    \quad\text{as }t\in\gamma,\ |t|\to\infty.
\]
Let
\[
    f:=b_\beta-b_\gamma,
\]
where \(b_\beta(e_t)=1\) if \(t\in\beta\) and \(b_\beta(e_t)=0\) otherwise.
Note that \(f\in S_{Y^*}\). Indeed, if \(A\subset T\) is a finite
antichain, then each branch meets \(A\) in at most one point. Hence
\[
F(A):=\sum_{t\in A}f(e_t)\in\{-1,0,1\}.
\]
Indeed, a point belonging to the common initial part of \(\beta\) and
\(\gamma\) contributes \(0\), a point in \(\beta\setminus\gamma\)
contributes \(1\), and a point in \(\gamma\setminus\beta\) contributes
\(-1\). Since \(A\) is an antichain, no other values are possible.
Therefore, by \eqref{eq:dual-tree-norm}, \(\|f\|_{Y^*}\leq 1\). Since \(\beta\neq\gamma\),
there exists \(t\in\beta\setminus\gamma\), and taking \(A=\{t\}\) gives
\(|F(A)|=1\). Thus \(\|f\|_{Y^*}=1\).

Fix \(n\in\mathbb N\), and write \(g=g_n\). For a finite antichain
\(A\subset T\), put
\[
    G(A):=\sum_{t\in A}g(e_t).
\]

We prove that \(|G(A)|\le \|g-f\|_{Y^*}\) for every finite antichain \(A\).
If \(F(A)=0\), then
\[
    |G(A)|
    =
    \left|
    \sum_{t\in A}(g-f)(e_t)
    \right|
    \le
    \|g-f\|_{Y^*}.
\]
Assume that \(F(A)=1\). Then \(A\) contains one point on
\(\beta\setminus\gamma\) and no point on \(\gamma\setminus\beta\). Moreover, \(A\) cannot contain a
common initial node of \(\beta\) and \(\gamma\), because it already
contains a point of \(\beta\setminus\gamma\). Hence, for every
\(a\in A\), all sufficiently far nodes of \(\gamma\) are incomparable
with \(a\). Thus we may choose \(v\in\gamma\setminus\beta\) arbitrarily
far along \(\gamma\) so that \(A\cup\{v\}\) is still an antichain and \(f(e_v)=-1\). Then
\[
    \sum_{t\in A\cup\{v\}}(g-f)(e_t)
    =
    G(A)+g(e_v).
\]
Hence
\[
    |G(A)+g(e_v)|
    \le
    \|g-f\|_{Y^*}.
\]
Letting \(|v|\to\infty\) along \(\gamma\), we obtain
\[
    |G(A)|\le \|g-f\|_{Y^*}.
\]
The case \(F(A)=-1\) is symmetric. Therefore, by \eqref{eq:dual-tree-norm},
\[
    \|g_n\|_{Y^*}\le \|g_n-f\|_{Y^*}
    \qquad(n\in\mathbb N).
\]

Consequently, if \(\{B(g_n,r_n)\}_{n=1}^\infty\) is any countable
family of open balls with \(r_n\le \|g_n\|_{Y^*}\), then the above
\(f\in S_{Y^*}\) does not belong to any of these balls. Hence \(S_{Y^*}\)
cannot be covered by countably many balls missing the origin. Thus \(Y^*\)
fails the BCP.
\end{proof}
A second, independent source of UBCP comes from norming families of uniformly
strongly exposed points. We will show that if the unit ball of a separable Banach space is the closed convex hull of a set of uniformly strongly exposed points, then the dual space has the UBCP.

Recall that a Banach space \(X\) is said to have the \emph{Lindenstrauss property A} if for every Banach space \(Y\), the set of norm-attaining operators from \(X\) to \(Y\) is dense in the space of all bounded linear operators from \(X\) to \(Y\). Lindenstrauss proved in \cite[Proposition~1]{L1963} that if \(B_X\) is the closed convex hull of a set of uniformly strongly exposed points, then \(X\) has Lindenstrauss property A. The following result shows that the same condition implies that the dual space \(X^*\) has the UBCP.

\begin{theorem}\label{thm: UBCP in dual}
Let \(X\) be a separable Banach space. If \(B_X\) is the closed convex hull of a set of uniformly strongly exposed points, then \(X^*\) has the UBCP and $\bc(X^*)=1$.
\end{theorem}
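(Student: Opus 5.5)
By Lemma~\ref{lem:countable-use-selection} we may fix a sequence \((u_n)\subset S_X\) with \(B_X=\cconv\{u_n\}\), uniformly strongly exposed by functionals \(f_n:=f_{u_n}\in S_{X^*}\). Then \(\sup_n |x^*(u_n)|=\|x^*\|\) for every \(x^*\in X^*\). The plan is to cover \(S_{X^*}\) by balls of the form \(B(mf_n+v, \cdot)\), where \(v\) is a perturbation taken from a countable set. The uniform strong exposure will be used exactly as the SCD slices were used in the proof of Theorem~\ref{thm: BCP in dual}, but now with a \emph{fixed} large multiplier \(m\) instead of \(m_n\to\infty\). Once the UBCP is established, \(\bc(X^*)=1\) follows at once: \(B_X\) is the closed convex hull of strongly exposed points, hence it is an SCD set (or one may quote \cite{GLM2019}), and Theorem~\ref{thm: BCP in dual} applies.

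For the covering, fix \(\varepsilon=1/4\) and let \(\delta>0\) be given by uniform strong exposure: if \(z\in B_X\) and \(f_n(z)>1-\delta\), then \(\|z-u_n\|<\varepsilon\). Choose an integer \(m\) with \(2/m<\delta\). I claim that for a suitable \(\alpha>0\) (say \(\alpha=1/4\)) we have
\[
S_{X^*}\subset\bigcup_{n}B(mf_n,\,m-\alpha)\cup\bigcup_n B(-mf_n,\,m-\alpha).
\]
Let \(f\in S_{X^*}\) and pick \(n\) with \(|f(u_n)|>1-\varepsilon\); by symmetry assume \(f(u_n)>1-\varepsilon\). Suppose \(\|mf_n-f\|\ge m-\alpha\). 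Then there is \(b\in B_X\) with \((mf_n-f)(b)>m-\alpha-\eta\) for small \(\eta>0\). This gives \(f_n(b)>1-(1+\alpha+\eta)/m>1-\delta\), and hence \(\|b-u_n\|<\varepsilon\). It also gives \(f(b)<\alpha+\eta\). On the other hand \(f(b)\ge f(u_n)-\varepsilon>1-2\varepsilon=1/2\), which contradicts \(f(b)<\alpha+\eta\) once \(\alpha+\eta<1/2\). So \(f\in B(mf_n,m-\alpha)\), with radius \(m-\alpha\le m\) and \(\|mf_n\|=m=(m-\alpha)+\alpha\). This gives the \((m,\alpha)\)-UBCP with countably many balls.

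The step I expect to be the main obstacle is locating, for a given \(f\), the correct index \(n\). It requires the norming property \(\sup_n|f(u_n)|=1\) from Lemma~\ref{lem:countable-use-selection}, together with the compatibility that \(b\) lands near \(u_n\) exactly where \(f\) is nearly maximal. Uniformity of \(\delta\) across \(n\) is what keeps \(m\), and hence the radii, bounded independently of \(n\); this is precisely the difference from the merely strongly exposed case. The sign reduction uses \(-u_n\), which is exposed by \(-f_n\) with the same \(\delta\).

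As a side remark, the argument also yields the \(\alpha\)-BCP for every \(\alpha<1\) directly. Replace \(1-2\varepsilon\) by values close to \(1\), which requires \(\varepsilon\to0\) and \(m\to\infty\) depending on \(\alpha\), and the same reasoning then gives \(\bc(X^*)=1\) without invoking Theorem~\ref{thm: BCP in dual}.
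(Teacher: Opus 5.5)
Your proposal is correct and follows essentially the same argument as the paper. The paper proves the more general Theorem~\ref{thm:nu-norming-UBCP-real}, applied with $\nu=1$ via Lemma~\ref{lem:countable-use-selection}. It covers $S_{X^*}$ by the balls $B(\pm\lambda u_n^*,\lambda-\alpha)$ with $\lambda=2/\delta$, and derives the same contradiction from a point $z\in B_X$ that uniform strong exposure forces close to $\theta u_N$. The only difference is how $\bc(X^*)=1$ is obtained: the paper gets it directly by letting the parameters depend on $\alpha\to 1$, exactly as in your side remark, whereas your main route through SCD and Theorem~\ref{thm: BCP in dual} is also valid.
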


Actually, Theorem~\ref{thm: UBCP in dual} is a particular case of the more general Theorem~\ref{thm:nu-norming-UBCP-real}, because the assumption that \(B_X\) is the closed convex hull of a set of uniformly strongly exposed points satisfies, by Lemma~\ref{lem:countable-use-selection}, the conditions (1)--(3) in Theorem~\ref{thm:nu-norming-UBCP-real} with $\nu=1$.

\begin{theorem}\label{thm:nu-norming-UBCP-real}
Let \(X\) be a separable Banach space and let \(0<\nu\leq 1\). Assume that
there exist sequences \((u_n)\subset S_X\) and \((u_n^*)\subset S_{X^*}\)
such that
\begin{enumerate}
    \item $u_n^*(u_n)=1$ for all $n\in\mathbb N,$
    \item the sequence \((u_n)\) is uniformly strongly exposed by \((u_n^*)\),
    \item $\sup_{n\in\mathbb N}|x^*(u_n)|\geq \nu\|x^*\|$ for all $x^*\in X^*$.
\end{enumerate}
Then $X^*$ has the UBCP and $\bc(X^*)\geq\nu$.
\end{theorem}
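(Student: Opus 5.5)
The plan is to use the countable family $\{\pm m\,u_n^*\colon n\in\mathbb N\}$ as centers, with one fixed large multiplier $m$. The uniformity in (2) is what lets a single $m$ work for all $n$, and this uniform choice is exactly what yields the UBCP rather than mere BCP. Fix $\alpha\in(0,\nu)$ and pick $\varepsilon>0$ with $\alpha<\nu-2\varepsilon$. Let $\delta>0$ be the constant given by the uniform strong exposedness (2) for this $\varepsilon$. Since $(u_n)$ is uniformly strongly exposed by $(u_n^*)$, the symmetric family $(-u_n)$ is uniformly strongly exposed by $(-u_n^*)$ with the same $\delta$. Choose an integer $m$ with $m\delta\geq 1+\nu$.

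The claim is
\[
S_{X^*}\subset\bigcup_{n\in\mathbb N}\Bigl(B\bigl(m u_n^*,m-\alpha\bigr)\cup B\bigl(-m u_n^*,m-\alpha\bigr)\Bigr).
\]
Let $f\in S_{X^*}$. By (3) there is $n$ with $|f(u_n)|>\nu-\varepsilon$. Replacing $(u_n,u_n^*)$ by $(-u_n,-u_n^*)$ if necessary, we may assume $f(u_n)>\nu-\varepsilon$. We then estimate $\|m u_n^*-f\|=\sup_{x\in B_X}\bigl(m u_n^*(x)-f(x)\bigr)$ by splitting $B_X$ into two parts:
\begin{itemize}
\item If $u_n^*(x)>1-\delta$, then $\|x-u_n\|<\varepsilon$, so $f(x)>f(u_n)-\varepsilon>\nu-2\varepsilon$. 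Since $u_n^*(x)\le 1$, we get $m u_n^*(x)-f(x)<m-(\nu-2\varepsilon)$.
\item If $u_n^*(x)\le 1-\delta$, then $m u_n^*(x)-f(x)\le m-m\delta+1\le m-\nu$.
\end{itemize}
Hence $\|m u_n^*-f\|\le m-(\nu-2\varepsilon)<m-\alpha$. This proves the claim.

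The centers have norm $m=(m-\alpha)+\alpha$ and all radii equal $m-\alpha\le m$. So this covering witnesses the $(m,\alpha)$-UBCP, and in particular the UBCP (taking, say, $\alpha=\nu/2$). Since $\alpha\in(0,\nu)$ was arbitrary, $X^*$ has the $\alpha$-BCP for every $\alpha<\nu$, and therefore $\bc(X^*)\geq\nu$.

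The only point needing care is the first case of the splitting: there the uniform strong exposedness must be transferred to the value of $f$ near $u_n$, which uses $\|f\|=1$ and the $\varepsilon$-closeness. The choice of $m$ independent of $n$ relies precisely on $\delta$ being uniform. No further obstacle is expected; the countability of the center family comes for free from the sequence $(u_n^*)$.
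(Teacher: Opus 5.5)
Your proposal is correct and follows essentially the same route as the paper. Both use the countable family of centers $\pm\lambda u_n^*$ with a single multiplier $\lambda$ of order $1/\delta$ coming from the uniform exposure, and radii $\lambda-\alpha$. Both bound $\|\lambda u_n^*-f\|$ by the same dichotomy on $B_X$: where $u_n^*$ exceeds $1-\delta$, $f$ is forced to be large; elsewhere, the large multiplier produces the deficit. The paper packages this as a proof by contradiction with $\lambda=2/\delta$ and an explicit sign $\theta$, which is only a cosmetic difference.
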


\begin{proof}
Let \(0<\alpha<\nu\). We shall show that \(X^*\) has the
\((\lambda-\alpha,\alpha)\)-UBCP for a suitable \(\lambda>0\). 

Choose \(\eta,\gamma\) such that \(\alpha<\eta<\gamma<\nu\), and set
\[
        \varepsilon:=\frac{\gamma-\eta}{2}.
\]
Since \((u_n)\) is uniformly strongly exposed by \((u_n^*)\), there exists
\(0<\delta<1\) such that
\begin{equation}\label{eq:uniform-exposure-nu}
        x\in B_X,\ n\in\mathbb N,\ u_n^*(x)>1-\delta
        \quad\Longrightarrow\quad
        \|x-u_n\|<\varepsilon .
\end{equation}
Set \(\lambda:=2/\delta\). We claim that
\[
        S_{X^*}\subset
        \bigcup_{\substack{n\in\mathbb N\\ \theta\in\{-1,1\}}}
        B\bigl(\lambda\theta u_n^*,\lambda-\alpha\bigr).
\]

Let \(x^*\in S_{X^*}\). By assumption (3), there exists \(N\in\mathbb N\)
such that \(|x^*(u_N)|>\gamma\). Choose \(\theta\in\{-1,1\}\) such that
\(\theta x^*(u_N)>\gamma\). We will prove that
\[
        \|\lambda\theta u_N^*-x^*\|\leq \lambda-\eta.
\]
Since \(\eta>\alpha\), this implies
 \(x^*\in B(\lambda\theta u_N^*,\lambda-\alpha)\).

Since \(B_X\) is symmetric, it is enough to prove that, for every
\(z\in B_X\),
\[
        \lambda\theta u_N^*(z)-x^*(z)\leq \lambda-\eta.
\]
Assume, towards a contradiction, that there exists \(z\in B_X\) such that
\begin{equation}\label{eq:contradiction-nu}
        \lambda\theta u_N^*(z)-x^*(z)> \lambda-\eta .
\end{equation}
Since \(x^*\in S_{X^*}\) and \(z\in B_X\), we have \(x^*(z)\geq -1\).
Hence
\[
        u_N^*(\theta z)=\theta u_N^*(z)>
        1+\frac{x^*(z)-\eta}{\lambda}
        \geq
        1-\frac{1+\eta}{\lambda}
        =
        1-\frac{1+\eta}{2}\delta
        >
        1-\delta.
\]
Therefore \eqref{eq:uniform-exposure-nu} gives
\[
        |x^*(z)-\theta x^*(u_N)|
        \leq \|z-\theta u_N\|=\|\theta z-u_N\|<\varepsilon,
\]
and consequently
\[
        x^*(z)>
        \theta x^*(u_N)-\varepsilon
        >
        \gamma-\varepsilon
        =
        \frac{\gamma+\eta}{2}
        >
        \eta.
\]
It follows that
\[
        \lambda\theta u_N^*(z)-x^*(z)
        \leq
        \lambda-x^*(z)
        <
        \lambda-\eta,
\]
which contradicts \eqref{eq:contradiction-nu}. Moreover, the same covering also shows that \(X^*\) has the \(\alpha\)-BCP.
Since \(0<\alpha<\nu\) was arbitrary, \(\bc(X^*)\geq\nu\).
\end{proof}

\begin{remark}
The converse of Theorem~\ref{thm:nu-norming-UBCP-real} does not hold. For example, \(\ell_1\) has the UBCP, but $c_0$ has no denting points (hence, also no strongly exposed points).  
\end{remark}

J. Lindenstrauss proved that if a separable Banach space \(X\) has the Lindenstrauss property A, then \(B_X\) is the closed convex hull of its strongly exposed points \cite[Theorem~2]{L1963}. The latter implies that $B_X$ is an SCD set \cite[Proposition~2.8]{AKMMS2010}, and hence Theorem~\ref{thm: BCP in dual} gives that \(\bc(X^*)=1\), in particular $X^*$ has the BCP. On the other hand, Theorem~\ref{thm: UBCP in dual} suggests that perhaps Lindenstrauss property A alone already implies that the dual space has the UBCP. We will now show in Proposition~\ref{prop: property A does not imply SBCP} that in general this is not true.

\begin{definition}[{\cite[Definition~1.2]{S1983}}]
    A Banach space $X$ is said to have \emph{property $\alpha$} if there is a $0\leq\rho<1$ and a family $(x_i,x^*_i)_{i\in I}\subset X\times X^*$ such that
    \begin{enumerate}
        \item $\|x_i\|=\|x^*_i\|=x^*_i(x_i)=1$ for every $i\in I$;
        \item if $i\neq j$, then $|x^*_i(x_j)|\leq \rho$;
        \item $B_X$ is the absolute closed convex hull of $\{x_i\}_{i\in I}$.
    \end{enumerate}
\end{definition}

W. Schachermayer observed that property $\alpha$ implies that the unit ball is the closed convex hull of a uniformly strongly exposed set, hence these spaces also have Lindenstrauss property $A$. It is known that every separable Banach space admits a $(3+\varepsilon)$-equivalent norm with property $\alpha$ \cite[Theorem~4.4]{S1983}. The following lemma follows closely the ideas of W. Schachermayer when he $(1+\varepsilon)$-equivalently renormed $c_0$ to have property $\alpha$ \cite[Proposition~3.1]{S1983}. Since we were not able to find this result for $C[0,1]$ in the literature, we include it here with a proof.

\begin{lemma}\label{lemma: C[0,1] renorming alpha}
    For every $K>1$ there is an equivalent norm $\vertiii{\cdot}$ on $C[0,1]$ such that 
    \[
    \frac{1}{K}\|\cdot\|_\infty\leq \vertiii{\cdot}\leq \|\cdot\|_\infty
    \]
    and $(C[0,1],\vertiii{\cdot})$ has property $\alpha$.
\end{lemma}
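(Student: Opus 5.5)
Following Schachermayer's renorming of $c_0$, the plan is to take a countable family of norm-one pairs $(x_i,x_i^*)$ in $C[0,1]$ and define the new unit ball as
\[
B:=\aconv\bigl(\{x_i\}_{i\in\mathbb N}\cup\tfrac{1}{K}B_{C[0,1]}\bigr),
\]
with $\vertiii{\cdot}$ its Minkowski functional. Since $\frac1K B_{C[0,1]}\subset B\subset B_{C[0,1]}$ as soon as $\|x_i\|_\infty\le 1$, we get $\frac1K\|\cdot\|_\infty\le\vertiii{\cdot}\le\|\cdot\|_\infty$ for free. The real work is arranging that $B$ is actually the closed absolutely convex hull of the $x_i$ alone, with functionals that make property $\alpha$ hold for $\vertiii{\cdot}$.

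\textbf{Choice of the pairs.} Fix a dense sequence $(t_i)$ of distinct points in $[0,1]$ and a small $\rho>0$ with $\rho<1/K$. Let $\varphi_i$ be continuous functions with $\varphi_i(t_i)=1$ and $0\le\varphi_i\le 1$, supported on pairwise small neighbourhoods of $t_i$. Set
\[
x_i:=\varphi_i+\tfrac1K h_i,\qquad x_i^*:=\delta_{t_i},
\]
where the $h_i$ are chosen from a dense subset of $B_{C[0,1]}$ with $h_i(t_i)=0$, enumerated so that every element of the dense set appears infinitely often. The dual functionals $\delta_{t_i}$ satisfy $\|\delta_{t_i}\|_{\vertiii{\cdot}^*}\ge1$ and must also be $\le 1$, i.e.\ $|f(t_i)|\le\vertiii f$. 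Because of this constraint I would instead take $x_i^*$ to be $\delta_{t_i}$ rescaled by the $\vertiii{\cdot}$-dual norm, and the generators must be arranged so that this dual norm is $1$. The cleaner route is to mimic Schachermayer exactly: with $\|x_i\|_\infty=x_i(t_i)=1$, require $|x_j(t_i)|\le\rho$ for $j\ne i$, and let $x_i^*=\delta_{t_i}$.

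\textbf{Verifying property $\alpha$.}
\begin{enumerate}
\item Condition (2) of property $\alpha$ is $|x_i^*(x_j)|=|x_j(t_i)|\le\rho$; this is the reason for spreading supports and keeping the perturbations $h_j$ small near the other $t_i$.
\item Every $f$ with $\|f\|_\infty\le 1/K$ should lie in $\aconv\{x_i\}$. I would approximate $f$ by some $h_i/K$ (close in norm) and write $\frac1K h_i=x_i-\varphi_i$. The term $\varphi_i$ has to be absorbed using other generators $x_j$ whose bumps nearly cancel. Alternatively, I would use $\frac12(x_i-x_{i'})$ with $x_{i'}=\varphi_{i'}-\frac1K h_i$: the bump parts are supported on tiny sets, so this combination is close to $\frac1K h_i$ away from those tiny sets. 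On the tiny sets the error is not small in the sup-norm, which is exactly the obstacle.
\item Once $B=\aconv\{x_i\}$, each $x_i^*$ is norming with value $1$ at $x_i$, and $\vertiii{x_i}=1$ because $x_i^*(x_i)=1$.
\end{enumerate}

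\textbf{Main obstacle.} The delicate step is the bump absorption in (2): showing that $\frac1K B_{C[0,1]}\subset\aconv\{x_i\}$ while keeping $|x_j(t_i)|\le\rho$. For $c_0$ this is easy because the coordinates are disjoint. For $C[0,1]$ I would exploit the slack $K>1$. I would choose generators of the form
\[
x_i=\varphi_i+(1-\varphi_i)g_i,\qquad \|g_i\|\le 1/K,
\]
with $g_i$ running through a dense subset infinitely often, so that $\|x_i\|_\infty=1$ and $x_i(t_i)=1$. Then for $g$ in the dense set I would average $x_{i_1},\dots,x_{i_m}$, where $g_{i_k}=g$ and the points $t_{i_k}$ are distinct. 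This average equals $g+\frac1m\sum_k\varphi_{i_k}(1-g)$. Since the bumps are disjoint, the error is at most $\frac{1}{m}(1+1/K)\to0$. Hence $g\in\aconv\{x_i\}$, and taking closures gives the inclusion. Finally, $|x_j(t_i)|=|g_j(t_i)|\le 1/K$ for $j\ne i$, so property $\alpha$ holds with $\rho=1/K<1$.
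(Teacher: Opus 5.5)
Your final construction (the ``Main obstacle'' paragraph) is the paper's construction divided by $K$. The paper takes $x_n=(1-\varphi_n)g_n+K\varphi_n$, where $g_n$ runs through a dense subset of $B_{C[0,1]}$ infinitely often, and sets $x_n^*=K^{-1}\delta_{t_n}$. It proves $B_{C[0,1]}\subset\aconv\{x_n\}$ by exactly your averaging over $m$ repeated indices and obtains property $\alpha$ with $\rho=K^{-1}$. Two points in your write-up are wrong as stated and need fixing.

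\textbf{The norm inequality is reversed.} A larger set has a smaller Minkowski functional. So $\frac1K B_{C[0,1]}\subset B\subset B_{C[0,1]}$ gives $\|\cdot\|_\infty\le\mu_B\le K\|\cdot\|_\infty$, not the $\frac1K\|\cdot\|_\infty\le\vertiii{\cdot}\le\|\cdot\|_\infty$ you claim ``for free''. Property $\alpha$ is invariant under scaling, so the fix is to take $\vertiii{\cdot}:=K^{-1}\mu_B$. Its unit ball is $KB=\aconv\{Kx_i\}$, and the witnessing pairs become $(Kx_i,K^{-1}\delta_{t_i})$, with $|K^{-1}\delta_{t_i}(Kx_j)|=|x_j(t_i)|\le K^{-1}$ for $j\ne i$. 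This is literally the paper's choice. The correction matters because the normalization is part of the statement. It is used in Proposition~\ref{prop: property A does not imply SBCP} through the dual inequality $\|y^*\|_0\le\|y^*\|_{n,*}\le K_n\|y^*\|_0$, which is the form required by Proposition~\ref{prop: ball-coverings with big radius}.

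\textbf{The points $t_i$ cannot be dense.} The set $\{\varphi_j\neq0\}$ is an open neighbourhood of $t_j$. If $(t_i)$ were dense, it would contain some $t_i$ with $i\ne j$. That breaks both the disjointness of the bumps and the identity $x_j(t_i)=g_j(t_i)$, and you need both: the identity gives condition (2) of property $\alpha$, and disjointness gives the $\frac1m\bigl(1+\frac1K\bigr)$ error bound. Density of $(t_i)$ is never actually used. Drop it and choose pairwise disjoint open intervals $I_i\ni t_i$ with $\operatorname{supp}\varphi_i\subset I_i$, as the paper does.

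With these two corrections your argument is complete. The earlier scaffolding should be discarded: adjoining $\frac1K B_{C[0,1]}$ to the hull, the $h_i$ with $h_i(t_i)=0$, and the differences $\frac12(x_i-x_{i'})$.
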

\begin{proof}
Since \(C[0,1]\) is separable, we can choose a countable dense subset $(d_j)_{j=1}^\infty$ of \(B_{C[0,1]}\). Let \((g_j)_{j=1}^\infty\) be a sequence in
\(B_{C[0,1]}\) obtained from \((d_j)\) by repeating every \(d_j\)
infinitely many times. Thus, for every \(f\in B_{C[0,1]}\) and every
\(\eta>0\), there are infinitely many \(n\) such that
\[
   \|g_n-f\|_\infty<\eta .
\]

Choose pairwise disjoint open intervals \(I_n\subset (0,1)\), points
\(t_n\in I_n\), and, by Urysohn's lemma, continuous functions \(\varphi_n\colon[0,1]\to[0,1]\)
such that
\[
   \operatorname{supp}\varphi_n\subset I_n
   \qquad\text{and}\qquad
   \varphi_n(t_n)=1 .
\]
Define
\[
   x_n:=(1-\varphi_n)g_n+K\varphi_n
   \qquad(n\in\mathbb N).
\]
Then \(x_n\in C[0,1]\) and $\|x_n\|_\infty\le K.$
Moreover, $x_n(t_n)=K.$ If \(m\neq n\), then \(t_m\notin \operatorname{supp}\varphi_n\), and hence
\[
   x_n(t_m)=g_n(t_m).
\]
In particular,
\[
   |x_n(t_m)|\le 1
   \qquad(m\neq n).
\]

Let
\[
   B:=\aconv\{x_n:n\in\mathbb N\}.
\]
Since
\(\|x_n\|_\infty\le K\) for every \(n\), we have
\[
   B\subset K B_{(C[0,1], \|\cdot\|_\infty)}.
\]

We claim that
\[
   B_{(C[0,1], \|\cdot\|_\infty)}\subset B.
\]
Let \(f\in B_{C[0,1]}\) and let \(\eta>0\). Choose \(d_j\) such that
\[
   \|d_j-f\|_\infty<\frac{\eta}{2}.
\]
Choose \(m\in\mathbb N\) so large that
\[
   \frac{K+1}{m}<\frac{\eta}{2}.
\]
Since \(d_j\) occurs infinitely many times in the sequence \((g_j)\),
we can choose distinct indices \(n_1,\dots,n_m\) such that
\[
   g_{n_1}=\cdots=g_{n_m}=d_j.
\]
Put
\[
   u:=\frac1m\sum_{\ell=1}^m x_{n_\ell}.
\]
Then \(u\in \operatorname{conv}\{x_{n_1},\dots,x_{n_m}\}\subset B\).
For every \(s\in[0,1]\), we have
\[
\begin{aligned}
   u(s)-d_j(s)
   &=\frac1m\sum_{\ell=1}^m
      \bigl(x_{n_\ell}(s)-d_j(s)\bigr)  =\frac1m\sum_{\ell=1}^m
      \varphi_{n_\ell}(s)\bigl(K-d_j(s)\bigr).
\end{aligned}
\]
Because the supports of the functions \(\varphi_{n_\ell}\) are pairwise
disjoint, at most one summand in the last sum is nonzero. Therefore
\[
   |u(s)-d_j(s)|\le \frac{K+1}{m}
   \qquad(s\in[0,1]).
\]
Thus
\[
   \|u-d_j\|_\infty\le \frac{K+1}{m}<\frac{\eta}{2}.
\]
Consequently,
\[
   \|u-f\|_\infty
   \le \|u-d_j\|_\infty+\|d_j-f\|_\infty
   <\eta.
\]
Since \(u\in B\) and \(B\) is closed, it follows that \(f\in B\). Hence
\(B_{C[0,1]}\subset B\), as claimed.

Now let $\vertiii{\cdot}$ be the Minkowski functional of \(B\), then $\vertiii{\cdot}$ is an equivalent norm on \(C[0,1]\).
The inclusions
\[
   B_{(C[0,1], \|\cdot\|_\infty)}\subset B\subset K B_{(C[0,1], \|\cdot\|_\infty)}
\]
give
\[
   \frac{1}{K}\|f\|_\infty\le \vertiii{f}\le \|f\|_\infty
   \qquad(f\in C[0,1]).
\]

Finally, we show that \((C[0,1],\vertiii{\cdot})\) has Schachermayer's property
\(\alpha\). For each \(n\in\mathbb N\), define
\[
   x_n^*:=K^{-1}\delta_{t_n}\in C[0,1]^*.
\]
Then
\[
   x_n^*(x_n)=K^{-1}x_n(t_n)=1.
\]
If \(m\neq n\), then
\[
   |x_n^*(x_m)|
   =K^{-1}|x_m(t_n)|
   =K^{-1}|g_m(t_n)|
   \le K^{-1}.
\]
Since
\[
   B=\aconv\{x_n:n\in\mathbb N\},
\]
we get
\[
\begin{aligned}
   \|x_n^*\|_{\vertiii{\cdot}^*}
   &=\sup_{z\in B}|x_n^*(z)|=\sup_m |x_n^*(x_m)|=1.
\end{aligned}
\]
In particular, since \(x_n\in B\), we have \(\vertiii{x_n}\le1\), while
\[
   \vertiii{x_n}\ge x_n^*(x_n)=1.
\]
Thus
\[
   \vertiii{x_n}=1,
   \qquad
   \|x_n^*\|_{\vertiii{\cdot}^*}=1,
   \qquad
   x_n^*(x_n)=1,
\]
and
\[
   |x_n^*(x_m)|\le K^{-1}<1
   \qquad(m\neq n).
\]
Therefore the family \((x_n,x_n^*)_{n=1}^\infty\) witnesses
Schachermayer's property \(\alpha\) for \((C[0,1],\vertiii{\cdot})\), with constant
$\rho=K^{-1}<1.$
\end{proof}

\begin{remark}
    Note that the same proof as in Lemma~\ref{lemma: C[0,1] renorming alpha} also works for $C(K)$ spaces whenever $K$ is an infinite compact metrizable space.
\end{remark}

\begin{proposition}\label{prop: property A does not imply SBCP}
    There exists a separable Banach space $X$ with Lindenstrauss property A such that $\bc(X^*)=1$, but $X^*$ fails the SBCP.
\end{proposition}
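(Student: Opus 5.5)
Take $X:=(C[0,1],\vertiii{\cdot}_K)$, where $\vertiii{\cdot}_K$ is the norm from Lemma~\ref{lemma: C[0,1] renorming alpha} for a suitable $K>1$. This norm satisfies
\[
\tfrac1K\|\cdot\|_\infty\le\vertiii{\cdot}_K\le\|\cdot\|_\infty
\]
and gives property $\alpha$. By Schachermayer's observation, $B_X$ is then the closed convex hull of a set of uniformly strongly exposed points. Lindenstrauss' result \cite[Proposition~1]{L1963} gives property A, and Theorem~\ref{thm: UBCP in dual} gives $\bc(X^*)=1$. So the only real issue is the failure of the SBCP.

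That failure should come from Proposition~\ref{prop: ball-coverings with big radius}. Its hypothesis holds here: $C[0,1]$ has the Daugavet property, so $(C[0,1]^*,\|\cdot\|_\infty^*)$ fails the $(-1)$-BCP by \cite[Lemma~5.1]{CLL2023}. Dualizing the norm inequality gives
\[
\|x^*\|_\infty^*\le\vertiii{x^*}_K^*\le K\|x^*\|_\infty^*,
\]
which is exactly the setting of Proposition~\ref{prop: ball-coverings with big radius} with $\varepsilon=K-1$. Consequently every BCP covering of $S_{(X^*,\vertiii{\cdot}_K^*)}$ has $\sup_n r_n>1/(K-1)$. A single $K$ therefore only rules out the $r$-SBCP for $r\le 1/(K-1)$, not the SBCP itself.

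The main obstacle is to get all radii at once, which forces a single space that behaves like every $K$ simultaneously. I would glue the renormings: take $K_m\downarrow1$, let $X_m:=(C[0,1],\vertiii{\cdot}_{K_m})$, and set $X:=\bigl(\bigoplus_m X_m\bigr)_{\ell_1}$, which is separable with $X^*=\bigl(\bigoplus_m X_m^*\bigr)_{\ell_\infty}$. Three facts then need checking.

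First, $B_X$ is the closed convex hull of $\bigcup_m \iota_m(U_m)$, where $U_m$ is the uniformly strongly exposed set in $X_m$. I would build these sets with a common modulus by fixing $\rho<1$ uniformly: the construction in Lemma~\ref{lemma: C[0,1] renorming alpha} gives $\rho=K^{-1}$, which is harmless since I may bound $K_m$ below, e.g.\ take $K_m\in(1,2]$ so that $\rho\le 1$ fails only at the limit. If needed, I would redo Schachermayer's estimate to make $\delta(\varepsilon)$ uniform in $m$. Exposing each point by the functional that is $u_{m}^*$ on the $m$-th coordinate and $0$ elsewhere keeps uniform strong exposure, because in an $\ell_1$-sum any $z\in B_X$ with $z_m$ close to $u_m$ in $u_m^*$ has small mass off coordinate $m$. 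Theorem~\ref{thm: UBCP in dual} and \cite{L1963} then give property A and $\bc(X^*)=1$.

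Second, $X^*$ fails the SBCP. Suppose $S_{X^*}\subset\bigcup_n B(x_n^*,r_n)$ with $\|x_n^*\|\ge r_n$ and $r_n\le r$. Restrict to the coordinate $X_m^*$ with $1/(K_m-1)>r$, embedded isometrically as the functionals supported on coordinate $m$. The obstruction is that the projected balls $B(P_mx_n^*,r_n)$ may contain the origin, so one cannot simply project the covering onto $X_m^*$. Instead I would rerun the argument of \cite[Proposition~2.3]{CLL2023} behind Proposition~\ref{prop: ball-coverings with big radius}. Applied to the countable set $\{P_mx_n^*\}$, it gives $u\in S_{X_m^*}$ that is $\ell_1$-orthogonal in $\|\cdot\|^*_\infty$ to all of them. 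I would then show that a suitable multiple of $u$, placed in coordinate $m$, lies in no ball, estimating the other coordinates by $\|x_n^*\|\ge r_n$ in the $\ell_\infty$-norm. This step is where I expect the real work.
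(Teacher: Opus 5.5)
Your space is the paper's and your second step is sound, but the first step contains a genuine error. $B_X$ cannot be the closed convex hull of any set of uniformly strongly exposed points, in particular not of $\bigcup_m\iota_m(U_m)$ with a common modulus. If it were, Theorem~\ref{thm: UBCP in dual} would give not just $\bc(X^*)=1$ but the UBCP of $X^*$, hence the SBCP. That contradicts exactly what your second step proves; this is the content of the remark after the proposition (the space has property A but not property $\alpha$).

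The loss of uniformity is forced. The lemma gives $\rho_m=K_m^{-1}$, and since the $g_j$ are dense in $B_{C[0,1]}$ this constant is sharp, so $\sup_m\rho_m=1$. The exposure modulus, of order $\varepsilon(1-\rho_m)$, therefore degenerates. Restricting to $K_m\in(1,2]$ does not help: a common modulus needs $\inf_m K_m>1$, which is precisely what you must give up to kill the SBCP. Compare $\lambda=2/\delta$ in the proof of Theorem~\ref{thm:nu-norming-UBCP-real}, which is where the radii blow up. For the same reason, your derivation of property A via \cite[Proposition~1]{L1963} also collapses.

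The fix is to work without uniformity.
\begin{itemize}
\item Your $\ell_1$-mass argument does show that each $\iota_m(u)$, $u\in U_m$, is strongly exposed in $B_X$ by $\iota_m(u^*)$, with the modulus of $X_m$. So $B_X$ is the closed convex hull of its strongly exposed points, hence SCD, and Theorem~\ref{thm: BCP in dual} gives $\bc(X^*)=1$.
\item Property A passes to $\ell_1$-sums \cite[Lemma~2]{PS2000}. This is also easy directly: since $\|T\|=\sup_m\|T|_{X_m}\|$, replace $T$ on a summand where this supremum is almost attained by a norm-attaining approximation rescaled to norm $\|T\|$.
\end{itemize}
This is essentially the paper's route; it obtains $\bc(X^*)=1$ via Lindenstrauss's theorem that property A forces $B_X$ to be the closed convex hull of strongly exposed points.

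Your second step is easier than you expect. If $\iota_m(y^*)\in B(x_n^*,r_n)$, the off-$m$ coordinates of $x_n^*$ have norm $<r_n$. So $\|x_n^*\|\ge r_n$ in the $\ell_\infty$-sum forces $\|P_mx_n^*\|\ge r_n$. Thus these balls $B(P_mx_n^*,r_n)$ form an admissible covering of $S_{X_m^*}$ with radii $\le r$, and Proposition~\ref{prop: ball-coverings with big radius} applies. This is what the paper takes from \cite[Theorem~2.3]{LuoZheng2021}. Rerunning the argument of \cite{CLL2023} as you plan would also work, but it is unnecessary.
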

\begin{proof}
    Let $Y:=C[0,1]$. Then $Y^*$ does not have $(-1)$-BCP, because $Y$ has the Daugavet property \cite[Lemma~5.1]{CLL2023}. For every $n\in \mathbb{N}$ let
    $Y_n:=(Y,\vertiii{\cdot}_n)$ be the equivalent renorming of $Y$ from Lemma~\ref{lemma: C[0,1] renorming alpha}, where $K_n:=1+1/n$.

    Denote by 
\[
X:=\left(\bigoplus_{n=1}^\infty Y_{n}\right)_{\ell_1}.
\]
We claim that $X$ is a separable Banach space with Lindenstrauss property A, hence also $\bc(X^*)=1$, but $X^*$ fails the SBCP. 

First, by Lemma~\ref{lemma: C[0,1] renorming alpha}, every $Y_n$ has property $\alpha$, hence also Lindenstrauss property A. Finally, note that Lindenstrauss property A is stable under $\ell_1$-sums \cite[Lemma~2]{PS2000}. So $X$ has  Lindenstrauss property A.

Secondly, $\bc(X^*)=1$, because Lindenstrauss property A implies that $B_X$ is the closed convex hull of its strongly exposed points \cite[Theorem~2]{L1963}, and we can apply Theorem~\ref{thm: BCP in dual}.

Thirdly, \(X^*\) fails the SBCP. Denote by \(\|\cdot\|_0\) the usual
dual norm on \(Y^*=C[0,1]^*\), and denote by \(\|\cdot\|_{n,*}\) the dual
norm on
\[
        Y_n^*=(Y,\vertiii{\cdot}_n)^* .
\]
By Lemma~\ref{lemma: C[0,1] renorming alpha}, for every \(y\in Y\),
\[
        \frac{1}{K_n}\|y\|_\infty
        \leq
        \vertiii{y}_n
        \leq
        \|y\|_\infty.
\]
Passing to dual norms, we get, for every \(y^*\in Y^*\),
\[
        \|y^*\|_0
        \leq
        \|y^*\|_{n,*}
        \leq
        K_n\|y^*\|_0
        =
        \left(1+\frac1n\right)\|y^*\|_0 .
\]
In particular, the norm \(\|\cdot\|_{n,*}\) is
\(\left(1+\frac1n\right)\)-equivalent to \(\|\cdot\|_0\) in the sense
needed in Proposition~\ref{prop: ball-coverings with big radius}. Since \(Y^*\), endowed with the usual dual
norm \(\|\cdot\|_0\), fails the \((-1)\)-BCP, Proposition~\ref{prop: ball-coverings with big radius} applied
with \(\varepsilon=1/n\) gives the following: every ball covering
\[
        \{B_{\|\cdot\|_{n,*}}(a_k,r_k): k\in\mathbb N\}
\]
of \(S_{Y_n^*}\) satisfies
\begin{equation}\label{eq: radius unbounded}
    \sup_{k\in\mathbb N} r_k > n .  
\end{equation}
              
Suppose now, towards a contradiction, that \(X^*\) has the SBCP. Then
\(X^*\) has the \(r\)-SBCP for some \(r>0\). Since
\[
        X^*
        =
        \left(
        \bigoplus_{n=1}^\infty Y_n^*
        \right)_{\ell_\infty},
\]
it follows from \cite[Theorem~2.3]{LuoZheng2021} that each \(Y_n^*\) has the
\(r\)-SBCP. Hence, for every \(n\in\mathbb N\), the sphere
\(S_{Y_n^*}\) admits a countable ball covering whose radii are bounded
above by the same \(r\). But this is in contradiction with \eqref{eq: radius unbounded}, because the lower bounds of radii increase by each factor.
\end{proof}

\begin{remark}
    Note that the space $X$ constructed in Proposition~\ref{prop: property A does not imply SBCP} is another example of a Banach space with Lindenstrauss property A, but without property $\alpha$. Indeed, if it had property $\alpha$, then Theorem~\ref{thm: UBCP in dual} would imply that $X^*$ has the UBCP, but, by Proposition~\ref{prop: property A does not imply SBCP}, it would fail the SBCP, a contradiction.
\end{remark}

We do not know of a geometric condition on $B_X$ which would give the SBCP of $X^*$ and would differ from the assumptions in Proposition~\ref{prop:k-unconditional-dual-bcp} and Theorem~\ref{thm:nu-norming-UBCP-real}.

\begin{question}
    What conditions on $B_X$ are sufficient for $X^*$ to have the SBCP? 
\end{question}

\section{Applications to spaces of operators}\label{sec: L(X,Y)}
The results of the previous section are especially useful for spaces of
operators, because these spaces are naturally duals of projective tensor
products. Recall that, for Banach spaces \(X\) and \(Y\), there are
canonical isometric identifications
\[
        \mathcal L(X,Y^*)
        \cong
        (X\widehat\otimes_\pi Y)^*
        \cong
        \mathcal L(Y,X^*).
\]
Moreover,
\[
        B_{X\widehat\otimes_\pi Y}
        =
        \overline{\operatorname{conv}}(B_X\otimes B_Y).
\]
Thus, for separable \(X\) and \(Y\), the criteria from Section~\ref{sec: main results} reduce
the BCP and UBCP of the operator spaces
\(\mathcal L(X,Y^*)\) and \(\mathcal L(Y,X^*)\) to geometric properties
of the unit ball of \(X\widehat\otimes_\pi Y\). 

\subsection{BCP in spaces of operators}
For the BCP we use stability of SCD sets under projective tensor
products. In particular, if one factor has an SCD unit ball and the
other unit ball is the closed convex hull of its denting points, then
the unit ball of the projective tensor product is again SCD. 
\begin{theorem}\label{thm: BCP in L(X,Y)}
    Let $X$ and $Y$ be separable Banach spaces. If $B_X$ is an SCD set and $B_Y$ is the closed convex hull of its denting points, then $\bc(\mathcal{L}(X,Y^*))=\bc(\mathcal{L}(Y,X^*))=1$. In particular, $\mathcal{L}(X,Y^*)$ and $\mathcal{L}(Y,X^*)$ both have the BCP.
\end{theorem}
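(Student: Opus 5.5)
The plan is to reduce the statement to Theorem~\ref{thm: BCP in dual} applied to the projective tensor product $Z:=X\pten Y$. Since $X$ and $Y$ are separable, $Z$ is separable: it is the closure of the span of $\{x_k\otimes y_l\}$ for countable dense sets $(x_k)\subset X$ and $(y_l)\subset Y$. By the canonical isometric identifications $\mathcal L(X,Y^*)\cong Z^*\cong\mathcal L(Y,X^*)$, the ball-covering index is an isometric invariant. Hence it suffices to prove that $B_Z$ is an SCD set. Theorem~\ref{thm: BCP in dual} then gives $\bc(Z^*)=1$, and Lemma~\ref{lem:basic-implications}(d) with $\alpha=0$ yields the BCP for both operator spaces.

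The core step is to show that $B_Z=\cconv(B_X\otimes B_Y)$ is SCD. I would invoke the known stability result for SCD sets under projective tensor products from the SCD literature (the work on SCD points and tensor products, e.g.\ \cite{LLMRZ2024}): if $B_X$ is SCD and $B_Y=\cconv(\mathrm{dent}(B_Y))$, then $B_{X\pten Y}$ is SCD.

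If a self-contained argument is preferred, I would proceed as follows. First, by separability and Lemma~\ref{lem:countable-use-selection}-type reasoning, choose a countable set of denting points $(y_m)$ of $B_Y$ with $B_Y=\cconv\{y_m\}$. For each $m$ and $k$, fix a slice $T_{m,k}=S(B_Y,g_{m,k},\delta_{m,k})$ containing $y_m$ with diameter less than $1/k$. Let $(S_n)$ be a determining sequence of slices of $B_X$, $S_n=S(B_X,f_n,\gamma_n)$. Consider the countable family of slices of $B_Z$ given by the functionals $f_n\otimes g_{m,k}$, that is, the operators $x\mapsto f_n(x)g_{m,k}$, with suitable rational depths. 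The aim is to show that every slice of $B_Z$ contains one of these. Given a slice $S(B_Z,T,\eta)$ with $T\in\mathcal L(X,Y^*)$ of norm one, pick $x_0\otimes y_m$ deep in it; this is possible because the elements $x\otimes y_m$ norm $T$. Then use that $y_m$ is denting, which lets any $z\in B_Z$ with $(f\otimes g_{m,k})(z)$ close to $1$ be approximated by elements of the form $x\otimes y$ with $y$ close to $y_m$ and $x$ in a slice $S_n$ on which $T(\cdot)(y_m)$ is large.

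The main obstacle is this last step: converting a point $z=\sum\lambda_i x_i\otimes y_i$ lying in a tensor slice into information about its elementary tensors. I would handle it by a Bourgain-type averaging argument. Most of the mass $\lambda_i$ must sit on tensors with $f_n(x_i)g_{m,k}(y_i)$ close to $1$, so, up to signs, $y_i\in T_{m,k}$ and hence $\|y_i-y_m\|<1/k$. Likewise $x_i$ lies in a slice contained in $\{x: T(x)(y_m)>1-\eta/2\}$, which is available since $(S_n)$ determines all slices of $B_X$, including that one. Estimating $T(z)$ by $\sum\lambda_i T(x_i)(y_m)$ minus errors of order $1/k$ plus the small leftover mass then shows $T(z)>1-\eta$. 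This bookkeeping is exactly where I expect the difficulty to lie, so my first choice is to cite the tensor-stability theorem rather than redo it.
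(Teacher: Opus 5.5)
Your proposal is correct and follows the paper's proof exactly. The paper cites \cite[Corollary~4.3]{LLMRZ2024} to get that $B_{X\pten Y}$ is SCD, and then applies Theorem~\ref{thm: BCP in dual} through the identifications $\mathcal L(X,Y^*)\cong(X\pten Y)^*\cong\mathcal L(Y,X^*)$. Your optional self-contained sketch is also sound, though not needed: it uses a countable dense set of denting points, slices given by $f_n\otimes g_{m,k}$ with rational depths, and a Markov-type bound on the mass carried by the bad elementary tensors.
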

\begin{proof}
    Since $B_X$ is an SCD set and $B_Y$ is the closed convex hull of its denting points, we have that $B_{X\pten Y}$ is an SCD set \cite[Corollary~4.3]{LLMRZ2024}. Hence, from the identifications $\mathcal{L}(X,Y^*)\cong(X\pten Y)^*\cong \mathcal{L}(Y,X^*)$, and Theorem~\ref{thm: BCP in dual}, one deduces that $\bc(\mathcal{L}(X,Y^*))=\bc(\mathcal{L}(Y,X^*))=1$.
\end{proof}

The previous theorem can be applied, e.g., to pairs of Banach spaces, where one has a 1-unconditional basis and the other one has the Radon--Nikod\'ym property.

\begin{corollary}
    Let $X$ and $Y$ be separable Banach spaces such that $X$ has a 1-unconditional basis and $Y$ has the Radon--Nikod\'ym property. Then $\mathcal{L}(X,Y^*)$ and $\mathcal{L}(Y,X^*)$ both have the BCP.
\end{corollary}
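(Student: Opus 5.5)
The plan is to reduce the corollary directly to Theorem~\ref{thm: BCP in L(X,Y)}. That theorem needs two hypotheses: $B_X$ is an SCD set, and $B_Y$ is the closed convex hull of its denting points. Once both are verified, the theorem gives $\bc(\mathcal{L}(X,Y^*))=\bc(\mathcal{L}(Y,X^*))=1$. By Lemma~\ref{lem:basic-implications}(d), both spaces then have the $\alpha$-BCP for every $\alpha\in[-1,1)$. Taking $\alpha=0$ yields the BCP.

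For the first hypothesis, I will invoke \cite[Theorem~3.1]{KMMW2013}, listed in the preliminaries. It states that the unit ball of any Banach space with a $1$-unconditional basis is an SCD set. Since $X$ is assumed separable with a $1$-unconditional basis, $B_X$ is SCD.

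For the second hypothesis, I will use the classical characterization of the Radon--Nikod\'ym property (see, e.g., \cite{FHHMZ2011}). Every nonempty bounded closed convex subset of a space with the RNP is the closed convex hull of its denting points; indeed it is even the closed convex hull of its strongly exposed points, by the Phelps--Bourgain theorem. Applying this to $B_Y$ gives $B_Y=\cconv(\operatorname{dent}B_Y)$.

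There is no real obstacle here. The only step needing care is citing the correct form of the RNP characterization, namely that bounded closed convex sets are closed convex hulls of their denting or strongly exposed points. Separability of $X$ and $Y$ is assumed, so Theorem~\ref{thm: BCP in L(X,Y)} applies verbatim.
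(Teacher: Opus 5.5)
Your proposal is correct and follows the paper's proof: you cite \cite[Theorem~3.1]{KMMW2013} for $B_X$ being SCD and use the RNP characterization (the paper cites \cite{P1974}) to get $B_Y=\cconv(\text{denting points})$, then apply Theorem~\ref{thm: BCP in L(X,Y)}. Your extra step through Lemma~\ref{lem:basic-implications}(d) with $\alpha=0$ is valid but redundant, since that theorem already states the BCP conclusion.
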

\begin{proof}
    By \cite[Theorem 3.1]{KMMW2013}, \(B_X\) is an SCD set. Since \(Y\)
has the Radon--Nikodým property, \(B_Y\) is the closed convex hull of
its denting points \cite{P1974}. Thus Theorem~\ref{thm: BCP in L(X,Y)} applies.
\end{proof}

\subsection{UBCP in spaces of operators}
For the UBCP we need a uniform version of the classical description of
strongly exposed points in projective tensor products. Recall that W.~M.~Ruess and C.~P.~Stegall proved in \cite{RS1986} that for Banach spaces $X$ and $Y$, one has that
    \[
    \SE(B_ {X\pten Y})=\SE(B_X)\otimes \SE(B_Y),
    \]
where $\SE(B_X)$ denotes the set of strongly exposed points of $B_X$. Since the result for uniformly strongly exposed subsets seems to be not found in the literature and it is of independent interest due to its connections with norm-attaining theory, we prove directly that uniformly
strongly exposed subsets are stable under elementary tensors. 

\begin{proposition}
Let \(X\) and \(Y\) be Banach spaces, and let
\(U\subset S_X\) and \(V\subset S_Y\) be non-empty sets. Then \(U\) and
\(V\) are uniformly strongly exposed subsets of \(B_X\) and \(B_Y\),
respectively, if and only if
\[
        U\otimes V:=\{u\otimes v:u\in U,\ v\in V\}
\]
is a uniformly strongly exposed subset of
\(B_{X\widehat\otimes_\pi Y}\).
\end{proposition}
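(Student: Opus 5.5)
For the forward implication, I will use the exposing family $f_u\otimes g_v$, viewed as a bilinear form, i.e.\ an element of $(X\pten Y)^*=\mathcal L(X,Y^*)$. Here $(f_u)_{u\in U}$ uniformly strongly exposes $U$ and $(g_v)_{v\in V}$ uniformly strongly exposes $V$.

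Clearly $\|f_u\otimes g_v\|=1$ and $(f_u\otimes g_v)(u\otimes v)=1$, with $\|u\otimes v\|_\pi=1$. Fix $\varepsilon>0$ and take the common $\delta_0$ from the definitions for $U$ and $V$ with tolerance $\varepsilon/4$. Let $z\in B_{X\pten Y}$ satisfy $(f_u\otimes g_v)(z)>1-\delta$, with $\delta$ small to be chosen.

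Since $B_{X\pten Y}=\cconv(B_X\otimes B_Y)$, I approximate $z$ by a finite convex combination $w=\sum_i\lambda_i x_i\otimes y_i$ with $x_i\in B_X$, $y_i\in B_Y$, up to error $\delta$. Then $\sum_i\lambda_i f_u(x_i)g_v(y_i)>1-2\delta$. Each term is at most $1$ in absolute value, so by a Markov-type count the indices with $f_u(x_i)g_v(y_i)\le 1-\delta_0$ carry total weight at most $2\delta/\delta_0$.

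For the remaining "good" indices, $f_u(x_i)g_v(y_i)>1-\delta_0$ forces $|f_u(x_i)|>1-\delta_0$ and $|g_v(y_i)|>1-\delta_0$ with equal signs. Replacing $(x_i,y_i)$ by $(-x_i,-y_i)$ if necessary, which does not change $x_i\otimes y_i$, I get $f_u(x_i)>1-\delta_0$ and $g_v(y_i)>1-\delta_0$. Hence $\|x_i-u\|<\varepsilon/4$ and $\|y_i-v\|<\varepsilon/4$, so
\[
\|x_i\otimes y_i-u\otimes v\|_\pi\le\|x_i-u\|+\|y_i-v\|<\varepsilon/2 .
\]
The bad indices contribute at most $2\cdot 2\delta/\delta_0$. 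Thus
\[
\|z-u\otimes v\|_\pi<\delta+\varepsilon/2+4\delta/\delta_0,
\]
which is $<\varepsilon$ once $\delta$ is small. Since $\delta$ depends only on $\varepsilon$ and $\delta_0$, this is uniform in $(u,v)$.

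For the converse, suppose $U\otimes V$ is uniformly strongly exposed by some family $T_{u,v}\in\mathcal L(X,Y^*)$ of norm one with $T_{u,v}(u,v)=1$. Fix $v_0\in V$. For $u\in U$, I define $f_u:=T_{u,v_0}(\cdot,v_0)\in X^*$. Then $\|f_u\|\le1$ and $f_u(u)=1$, so $\|f_u\|=1$. If $x\in B_X$ and $f_u(x)>1-\delta$, then $z=x\otimes v_0\in B_{X\pten Y}$ satisfies $T_{u,v_0}(z)>1-\delta$, hence $\|x\otimes v_0-u\otimes v_0\|_\pi<\varepsilon$. Since $\|v_0\|=1$, this gives $\|(x-u)\otimes v_0\|_\pi=\|x-u\|<\varepsilon$. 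So $U$ is uniformly strongly exposed, and $V$ is handled symmetrically with a fixed $u_0\in U$.

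The main obstacle is the forward direction's averaging step. I must correctly handle the sign ambiguity, via the flip $(x_i,y_i)\mapsto(-x_i,-y_i)$, and control the weight of the bad terms uniformly. Passing from $z$ to its convex approximation $w$ also requires care, since the slice condition is only approximately inherited.
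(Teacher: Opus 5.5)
Your proof is correct and follows essentially the same route as the paper. It uses the same exposing functionals $f_u\otimes g_v$, the same Markov-type bound on the weight of the bad indices, the same sign flip $x_k\otimes y_k=(-x_k)\otimes(-y_k)$, and an identical converse via $x\mapsto T_{u,v_0}(x\otimes v_0)$. The only difference is cosmetic: you approximate $z$ within $\delta$ by a finite convex combination from $B_X\otimes B_Y$, whereas the paper writes $z$ exactly as a series $\sum_k c_kx_k\otimes y_k$ with $\sum_k c_k\le 1+\delta$ and pays an extra $|M-1|$ term. You should state explicitly that $\delta_0<1$, so that the good products are positive and the sign argument applies.
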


\begin{proof}
First assume that \(U\) and \(V\) are uniformly strongly exposed subsets
of \(B_X\) and \(B_Y\), respectively. Thus there are families
\[
        \{f_u:u\in U\}\subset S_{X^*},
        \qquad
        \{g_v:v\in V\}\subset S_{Y^*}
\]
such that
\[
        f_u(u)=1,\qquad g_v(v)=1
\]
for all \(u\in U\), \(v\in V\), and such that both families expose
uniformly. We claim that \(U\otimes V\) is uniformly strongly exposed by
the family
\[
        \{f_u\otimes g_v:u\in U,\ v\in V\}
        \subset S_{(X\widehat\otimes_\pi Y)^*}.
\]
Clearly
\[
        (f_u\otimes g_v)(u\otimes v)=1
\]
and \(\|f_u\otimes g_v\|=1\).

Let \(\varepsilon>0\). Choose \(0<\eta<\varepsilon/8\). By the uniform
strong exposure of \(U\) and \(V\), there is \(0<\theta<1\) such that
\[
        x\in B_X,\ f_u(x)>1-\theta
        \quad\Longrightarrow\quad
        \|x-u\|<\eta
\]
for every \(u\in U\), and
\[
        y\in B_Y,\ g_v(y)>1-\theta
        \quad\Longrightarrow\quad
        \|y-v\|<\eta
\]
for every \(v\in V\).

Choose \(0<\delta<1\) so small that
\begin{equation}\label{eq: delta small}
      2\eta(1+\delta)+\frac{4\delta}{\theta}+\delta<\varepsilon.
\end{equation}
      
We show that this \(\delta\) works for the family
\(\{f_u\otimes g_v \colon u\in U, v\in V\}\).

Fix \(u\in U\), \(v\in V\), and let
\(w\in B_{X\widehat\otimes_\pi Y}\) satisfy
\[
        (f_u\otimes g_v)(w)>1-\delta.
\]
Choose a representation
\[
        w=\sum_{k=1}^\infty c_k x_k\otimes y_k,
\]
where \(c_k\ge 0\), \(x_k\in S_X\), \(y_k\in S_Y\), and
\begin{equation}\label{eq: M leq 1+delta}
   M:=\sum_{k=1}^\infty c_k\le 1+\delta. 
\end{equation}

Denote
\[
        \alpha_k:=f_u(x_k),\qquad \beta_k:=g_v(y_k).
\]
Then \(|\alpha_k|\le 1\), \(|\beta_k|\le 1\), and
\begin{equation}\label{eq: M>1-delta}
          \sum_{k=1}^\infty c_k\alpha_k\beta_k
        =
        (f_u\otimes g_v)(w)
        >
        1-\delta.  
\end{equation}

Consequently,
\[
        \sum_{k=1}^\infty c_k(1-\alpha_k\beta_k)
        =
        M-\sum_{k=1}^\infty c_k\alpha_k\beta_k
        <
        (1+\delta)-(1-\delta)
        =
        2\delta.
\]

Let
\[
        G=\{k:\alpha_k\beta_k>1-\theta\}.
\]
With the set $G$ we can distinguish two cases:
\begin{enumerate}
    \item[(a)] Let \(k\in G\). Since \(\alpha_k,\beta_k\in[-1,1]\) and
\(\alpha_k\beta_k>1-\theta>0\), either
\begin{enumerate}
    \item[(1)] $\alpha_k>1-\theta
        \quad\text{and}\quad
        \beta_k>1-\theta$, or
    \item[(2)] $\alpha_k<-1+\theta
        \quad\text{and}\quad
        \beta_k<-1+\theta.$ 
\end{enumerate}
In the first case, the uniform strong exposure of \(U\) and \(V\) gives
\[
        \|x_k-u\|<\eta,
        \qquad
        \|y_k-v\|<\eta.
\]
In the second case,
\[
        f_u(-x_k)>1-\theta,
        \qquad
        g_v(-y_k)>1-\theta,
\]
and hence
\[
        \|-x_k-u\|<\eta,
        \qquad
        \|-y_k-v\|<\eta.
\]
But
\[
        x_k\otimes y_k=(-x_k)\otimes(-y_k),
\]
so in both cases we have
\begin{equation}\label{eq: u times v close to x_k times y_k}
            \|x_k\otimes y_k-u\otimes v\|_\pi<2\eta.
\end{equation}

    \item[(b)] If \(k\notin G\), then \(1-\alpha_k\beta_k\ge \theta\). Hence
\begin{equation}\label{eq: bad set}
           \sum_{k\notin G}c_k
        \le
        \frac{1}{\theta}
        \sum_{k=1}^\infty c_k(1-\alpha_k\beta_k)
        <
        \frac{2\delta}{\theta}. 
\end{equation}
\end{enumerate}

Observe that \eqref{eq: M leq 1+delta} and \eqref{eq: M>1-delta} together imply that $|M-1|\leq\delta$. Therefore, by \eqref{eq: u times v close to x_k times y_k}, \eqref{eq: bad set}, and \eqref{eq: delta small}, we can conclude that
\[
\begin{aligned}
\|w-u\otimes v\|_\pi
&\le
\|w-Mu\otimes v\|_\pi+|M-1|  \\
&\le
\sum_{k=1}^\infty c_k
   \|x_k\otimes y_k-u\otimes v\|_\pi
   +|M-1|                                      \\
&\le
\sum_{k\in G}c_k
   \|x_k\otimes y_k-u\otimes v\|_\pi
+
\sum_{k\notin G}c_k
   \|x_k\otimes y_k-u\otimes v\|_\pi
+
|M-1|                                          \\
&<
2\eta M
+
2\sum_{k\notin G}c_k
+
|M-1|                                          \\
&<
2\eta(1+\delta)
+
\frac{4\delta}{\theta}
+
\delta\\&
<\varepsilon.
\end{aligned}
\]
Thus \(U\otimes V\) is a uniformly strongly exposed
subset of \(B_{X\widehat\otimes_\pi Y}\).

Conversely, assume that \(U\otimes V\) is a uniformly strongly exposed
subset of \(B_{X\widehat\otimes_\pi Y}\). Hence, for each
\(u\in U\) and \(v\in V\), there is a functional
\[
        T_{u,v}\in S_{(X\widehat\otimes_\pi Y)^*}
\]
such that
\[
        T_{u,v}(u\otimes v)=1,
\]
and the family \(\{T_{u,v}:u\in U,\ v\in V\}\) strongly exposes
\(U\otimes V\) uniformly.

Fix \(v_0\in V\). For each \(u\in U\), define
\[
        f_u(x):=T_{u,v_0}(x\otimes v_0),
        \qquad x\in X.
\]
Then \(\|f_u\|\le 1\), while
\[
        f_u(u)=T_{u,v_0}(u\otimes v_0)=1.
\]
Therefore \(f_u\in S_{X^*}\) and \(f_u(u)=1\).

We show that the family \(\{f_u:u\in U\}\) strongly exposes \(U\)
uniformly. Let \(\varepsilon>0\). By the uniform strong exposure of
\(U\otimes V\), there is \(\delta>0\) such that whenever
\(z\in B_{X\widehat\otimes_\pi Y}\), \(u\in U\), and \(v\in V\) satisfy
\[
        T_{u,v}(z)>1-\delta,
\]
then
\[
        \|z-u\otimes v\|_\pi<\varepsilon.
\]
Now let \(u\in U\) and \(x\in B_X\) satisfy
\[
        f_u(x)>1-\delta.
\]
Then \(x\otimes v_0\in B_{X\widehat\otimes_\pi Y}\), and
\[
        T_{u,v_0}(x\otimes v_0)=f_u(x)>1-\delta.
\]
Hence
\[
        \|x\otimes v_0-u\otimes v_0\|_\pi<\varepsilon.
\]
Since \(\|v_0\|=1\), we have
\[
        \|x\otimes v_0-u\otimes v_0\|_\pi
        =
        \|(x-u)\otimes v_0\|_\pi
        =
        \|x-u\|.
\]
Thus \(\|x-u\|<\varepsilon\). Therefore \(U\) is a uniformly strongly
exposed subset of \(B_X\).

The proof for \(V\) is symmetric, and is omitted. Consequently \(U\) and \(V\) are uniformly strongly exposed subsets of
\(B_X\) and \(B_Y\), respectively.
\end{proof}

By the standard description of the
unit ball of the projective tensor product:
\[
        B_{X\pten Y}=\overline{\operatorname{conv}}(B_X\otimes B_Y),
\]
one immediately obtains the following consequence.

\begin{corollary}\label{cor: USE sets in projective tensor products}
Let \(X\) and \(Y\) be Banach spaces. Suppose that
\[
        B_X=\overline{\operatorname{conv}}(U)
        \qquad\text{and}\qquad
        B_Y=\overline{\operatorname{conv}}(V),
\]
where \(U\subset S_X\) and \(V\subset S_Y\) are sets of uniformly strongly
exposed points of \(B_X\) and \(B_Y\), respectively. Then
\[
        B_{X\widehat\otimes_\pi Y}
        =\overline{\operatorname{conv}}(U\otimes V),
\]
and \(U\otimes V\) is a set of uniformly
strongly exposed points of \(B_{X\widehat\otimes_\pi Y}\).
\end{corollary}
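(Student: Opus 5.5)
The corollary has two parts. The second, that \(U\otimes V\) is a set of uniformly strongly exposed points of \(B_{X\widehat\otimes_\pi Y}\), follows directly from the forward implication of the preceding proposition. First we check that \(U\otimes V\subset S_{X\widehat\otimes_\pi Y}\), which holds because \(\|u\otimes v\|_\pi=\|u\|\|v\|=1\). So the only real work is the identity
\[
B_{X\widehat\otimes_\pi Y}=\overline{\operatorname{conv}}(U\otimes V).
\]

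The inclusion \(\supseteq\) is immediate, since \(U\otimes V\subset B_X\otimes B_Y\) and the unit ball is closed and convex. For \(\subseteq\) we use \(B_{X\widehat\otimes_\pi Y}=\overline{\operatorname{conv}}(B_X\otimes B_Y)\). It then suffices to show that every elementary tensor \(x\otimes y\) with \(x\in B_X\), \(y\in B_Y\) lies in \(C:=\overline{\operatorname{conv}}(U\otimes V)\).

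We argue in two steps. First fix \(v\in V\) and \(x\in B_X\), and choose \(x_j\in\operatorname{conv}(U)\) with \(x_j\to x\). Writing \(x_j=\sum_i\lambda_i u_i\), we get \(x_j\otimes v=\sum_i\lambda_i u_i\otimes v\in\operatorname{conv}(U\otimes V)\). Moreover \(\|x_j\otimes v-x\otimes v\|_\pi=\|x_j-x\|\|v\|\to0\), so \(x\otimes v\in C\).

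Next fix \(x\in B_X\) and \(y\in B_Y\), and choose \(y_j=\sum_i\mu_i v_i\in\operatorname{conv}(V)\) with \(y_j\to y\). Then \(x\otimes y_j=\sum_i\mu_i\, x\otimes v_i\) is a convex combination of elements of \(C\), hence belongs to the convex set \(C\). Since \(\|x\otimes y_j-x\otimes y\|_\pi\le\|y_j-y\|\to0\) and \(C\) is closed, \(x\otimes y\in C\).

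Hence \(B_X\otimes B_Y\subset C\), and taking closed convex hulls gives the identity. Nothing here is a genuine obstacle. The only points needing care are the bilinearity estimate \(\|x\otimes y\|_\pi\le\|x\|\|y\|\), used for norm continuity of the tensor map in each variable, and the fact that convex combinations in one variable pass through the tensor.
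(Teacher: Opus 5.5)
Your proposal is correct and follows the paper's route. The paper obtains the corollary immediately from the forward direction of the preceding proposition together with \(B_{X\widehat\otimes_\pi Y}=\overline{\operatorname{conv}}(B_X\otimes B_Y)\); your two-step approximation argument just writes out the routine verification of \(B_X\otimes B_Y\subset\overline{\operatorname{conv}}(U\otimes V)\) that the paper leaves implicit.
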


\begin{theorem}\label{thm: UBCP in L(X,Y*)}
    Let $X$ and $Y$ be separable Banach spaces such that $B_X$ and $B_Y$ are the closed convex hulls of sets of uniformly strongly exposed points. Then $\mathcal{L}(X,Y^*)$ and $\mathcal{L}(Y,X^*)$ both have the UBCP, and $\bc (\mathcal{L}(X,Y^*))=\bc(\mathcal{L}(Y,X^*))=1$. 
\end{theorem}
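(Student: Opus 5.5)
The plan is to reduce everything to Theorem~\ref{thm: UBCP in dual}, applied to the projective tensor product $Z:=X\pten Y$, and then to transport the conclusion along the canonical isometries $\mathcal L(X,Y^*)\cong Z^*\cong\mathcal L(Y,X^*)$.

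First, I will check that $Z$ is separable. If $(x_n)$ and $(y_m)$ are dense sequences in $X$ and $Y$, then the rational finite linear combinations of the tensors $x_n\otimes y_m$ form a countable set. This set is dense in the algebraic tensor product for $\|\cdot\|_\pi$, because $\|x\otimes y-x'\otimes y'\|_\pi\le\|x-x'\|\|y\|+\|x'\|\|y-y'\|$. It is therefore dense in the completion $Z$.

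Second, I write $B_X=\cconv(U)$ and $B_Y=\cconv(V)$, where $U\subset S_X$ and $V\subset S_Y$ are sets of uniformly strongly exposed points. Corollary~\ref{cor: USE sets in projective tensor products} then gives $B_Z=\cconv(U\otimes V)$, with $U\otimes V\subset S_Z$ a set of uniformly strongly exposed points of $B_Z$. The inclusion $U\otimes V\subset S_Z$ holds because $\|u\otimes v\|_\pi=\|u\|\|v\|=1$. So $Z$ is a separable Banach space whose unit ball is the closed convex hull of a set of uniformly strongly exposed points. If one wants to pass explicitly through Theorem~\ref{thm:nu-norming-UBCP-real}, Lemma~\ref{lem:countable-use-selection} supplies a sequence $(u_n\otimes v_n)$ from $U\otimes V$, exposed uniformly by $(f_{u_n}\otimes g_{v_n})$ and norming $Z^*$ with $\nu=1$. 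Theorem~\ref{thm: UBCP in dual} then yields that $Z^*$ has the UBCP and $\bc(Z^*)=1$.

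Finally, the UBCP and the $\alpha$-BCP are defined purely in terms of the norm and of balls, so they are invariant under surjective linear isometries. Hence $\mathcal L(X,Y^*)$ and $\mathcal L(Y,X^*)$ both have the UBCP and both have ball-covering index $1$.

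I do not expect a real obstacle, since the substantive work is already in the tensor product proposition and in Theorem~\ref{thm: UBCP in dual}. The only point to state carefully is that the exposing functionals $f_u\otimes g_v$ lie in $S_{Z^*}$ and satisfy the hypotheses (1)--(3) of Theorem~\ref{thm:nu-norming-UBCP-real}. This is exactly what the corollary together with Lemma~\ref{lem:countable-use-selection} provides.
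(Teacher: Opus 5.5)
Your proposal is correct and follows essentially the paper's proof. You use Corollary~\ref{cor: USE sets in projective tensor products} to see that $B_{X\pten Y}$ is the closed convex hull of a uniformly strongly exposed set, then apply Theorem~\ref{thm: UBCP in dual} and transfer the conclusion along the isometric identifications $\mathcal L(X,Y^*)\cong(X\pten Y)^*\cong\mathcal L(Y,X^*)$. Your explicit check that $X\pten Y$ is separable fills in a detail the paper leaves implicit.
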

\begin{proof}
    Let $X$ and $Y$ be separable Banach spaces such that $B_X$ and $B_Y$ are the closed convex hulls of sets of uniformly strongly exposed points. Then, by Corollary~\ref{cor: USE sets in projective tensor products}, the same is true for $ B_{X\pten Y}$. Since $\mathcal{L}(X,Y^*)\cong(X\pten Y)^*\cong\mathcal{L}(Y,X^*)$, by Theorem~\ref{thm: UBCP in dual}, we can conclude that $\mathcal{L}(X,Y^*)$ and $\mathcal{L}(Y,X^*)$ both have the UBCP, and $\bc (\mathcal{L}(X,Y^*))=\bc(\mathcal{L}(Y,X^*))=1$.
\end{proof}

From \cite[Corollary~3.3]{LLLZ2022}, we know that $\mathcal{L}(\ell_1,\ell_\infty)$ has the BCP. With this new theorem at hand, we can say that this space even has the UBCP.

\begin{example}
The space $\mathcal{L}(\ell_1,\ell_\infty)$ has the UBCP and $\bc (\mathcal{L}(\ell_1,\ell_\infty))=1$, because $B_{\ell_1}$ satisfies the assumptions of Theorem~\ref{thm: UBCP in L(X,Y*)}.
\end{example}

In \cite[Corollary~3.1]{LLLZ2022}, it is shown that $\mathcal{L}(\ell_p)$ has the UBCP whenever $1\leq p<\infty$. 
In \cite[Theorem~3.3]{BLS2025}, the authors prove that $\mathcal{L}(L_p[0,1])$ has the UBCP whenever $3/2<p<3$ and they wonder if this holds for all $1<p<\infty$ \cite[Question 3]{BLS2025}. With the results established so far, we can answer this question positively and give an alternative way to simultaneously prove \cite[Theorem~3.3]{BLS2025} and \cite[Corollary~3.1]{LLLZ2022} for $1<p<\infty$.

Recall that if a Banach space $X$ is uniformly convex, then \(S_X\) is a uniformly strongly exposed subset of \(B_X\). Examples of uniformly convex spaces include $\ell_p$ and  $L_p[0,1]$ whenever $1<p<\infty$. 

\begin{corollary}
Let $1<p<\infty$ and let $X_p$ be either the space $\ell_p$ or $L_p[0,1]$. Then the space $\mathcal{L}(X_p)$ has the UBCP and $\bc(\mathcal{L}(X_p))=1$.
\end{corollary}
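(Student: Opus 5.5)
The plan is to apply Theorem~\ref{thm: UBCP in L(X,Y*)} with $X=Y=X_p^*$, where $X_p^*$ denotes $\ell_q$ or $L_q[0,1]$ with $1/p+1/q=1$. The key observation is that $\mathcal L(X_p)$ is itself a space of the form $\mathcal L(X,Y^*)$. Since $1<p<\infty$, the space $X_p$ is reflexive, so $X_p = (X_p^*)^*$ isometrically. Hence
\[
\mathcal L(X_p)=\mathcal L\bigl(X_p, (X_p^*)^*\bigr)\cong \mathcal L(X,Y^*)\qquad\text{with } X:=X_p,\ Y:=X_p^*.
\]
Equivalently, $\mathcal L(X_p)\cong (X_p\pten X_p^*)^*$. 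Here $X_p^*$ is $\ell_q$ or $L_q[0,1]$ respectively, and again $1<q<\infty$.

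Next I would check the hypotheses of Theorem~\ref{thm: UBCP in L(X,Y*)}.

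\emph{Separability.} Both $X_p$ and $X_p^*$ are separable.

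\emph{Uniform strong exposure.} Both $X_p$ and $X_p^*$ are uniformly convex, by Clarkson's inequalities for $1<p,q<\infty$. By the remark preceding the statement, $S_X$ is then a uniformly strongly exposed subset of $B_X$, and trivially $B_X=\cconv(S_X)$.

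To be safe, I would briefly justify that remark. For each $u\in S_X$ pick $f_u\in S_{X^*}$ with $f_u(u)=1$. If $z\in B_X$ and $f_u(z)>1-\delta$, then
\[
\|(u+z)/2\|\ge f_u\bigl((u+z)/2\bigr)>1-\delta/2 .
\]
The modulus of convexity then forces $\|u-z\|<\varepsilon$, with $\delta$ depending only on $\varepsilon$, so the exposure is uniform over $u\in S_X$.

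Applying Theorem~\ref{thm: UBCP in L(X,Y*)} then gives that $\mathcal L(X_p,(X_p^*)^*)=\mathcal L(X_p)$ has the UBCP and $\bc(\mathcal L(X_p))=1$.

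The only step needing care is that the identification $\mathcal L(X_p)\cong\mathcal L(X_p,(X_p^*)^*)$ is isometric. This holds because the canonical embedding $X_p\to X_p^{**}$ is a surjective isometry, and the UBCP and the index $\bc(\cdot)$ are invariant under isometries. There is no real obstacle beyond this.
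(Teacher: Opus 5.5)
Your proof is correct and follows the paper's own argument: apply Theorem~\ref{thm: UBCP in L(X,Y*)} with $X=X_p$ and $Y=X_q=X_p^*$, use that uniformly convex spaces have uniformly strongly exposed unit spheres, and note $\mathcal L(X_p,X_q^*)=\mathcal L(X_p)$. One slip: your opening sentence says ``$X=Y=X_p^*$'', which would give $\mathcal L(X_q,X_p)$ instead; the choice you actually use later, $X=X_p$ and $Y=X_p^*$, is the right one.
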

\begin{proof}
    Let $1<p<\infty$. Then $X_p$ is a separable Banach space which is also uniformly convex, hence its unit ball is the closed convex hull of a set of uniformly strongly exposed points. Let $1<q<\infty$ be the conjugate exponent, so that $1/p+1/q=1$. Then $X_q$ is also a separable Banach space with the same property. Hence, by Theorem~\ref{thm: UBCP in L(X,Y*)}, $\mathcal{L}(X_p, (X_q)^*)=\mathcal{L}(X_p, X_p)$ has the UBCP and $\bc(\mathcal{L}(X_p))=1$.
\end{proof}

\section{Applications to Lipschitz spaces}\label{sec: Lipschitz spaces}
For infinite metric spaces, ball-covering properties of
\(\operatorname{Lip}_0(M)\) appear to be poorly understood. There are,
however, two useful sources of examples. On the positive side, if \(M\) is
separable and complete and \(\mathcal F(M)\) has the Radon--Nikod\'ym
property, then \(B_{\mathcal F(M)}\) is the closed convex hull of its denting
points, and \cite[Lemma~14]{GLM2019} yields \(bc(\operatorname{Lip}_0(M))=1\).
In particular, this applies to separable complete purely \(1\)-unrectifiable
metric spaces by \cite[Theorem C]{AGPP2022}. On the negative side, if \(M\)
is a length metric space, then \(\operatorname{Lip}_0(M)\) has the Daugavet
property and fails the \((-1)\)-BCP; see \cite[Proposition~5.2]{CLL2023}. In this section, we will aim to produce many new examples of Lipschitz spaces with the ball-covering properties. 

Let $(M,d)$ be a complete metric space with a distinguished point
$0\in M$. The Lipschitz space $\Lip(M)$ is defined as
the Banach space of all Lipschitz functions $f\colon M\to \mathbb{R}$ such
that $f(0)=0$, endowed with the norm given by the best Lipschitz constant
\[
\|f\|=\sup\left\{
\frac{|f(x)-f(y)|}{d(x,y)}\colon x\neq y\in M
\right\}.
\]
For each $x\in M$, the evaluation functional
$\delta(x)\colon f\mapsto f(x)$ belongs to the dual $\Lip(M)^*$.
The \emph{Lipschitz-free space over $M$} is defined as the closed linear span
\[
\mathcal{F}(M):=\overline{\operatorname{span}}\{\delta(x)\colon x\in M\}.
\]
It is well known that $\mathcal{F}(M)$ is an isometric
predual of $\Lip(M)$, see, e.g., \cite[Theorem~3.3]{W2018}. The most important elements of $\mathcal{F}(M)$ are the so-called
\emph{molecules}, of the form
\[
m_{xy}:=\frac{\delta(x)-\delta(y)}{d(x,y)}
\]
for $x\neq y\in M$. The set of molecules in $\mathcal{F}(M)$ will be denoted by
$\Mol M$. Molecules have norm $1$, and it follows easily from the
Hahn--Banach separation theorem that
\[
B_{\mathcal{F}(M)}
=
\cconv(\Mol M)
\]
see, e.g., \cite[Proposition 3.29]{W2018}.
\subsection{BCP in Lipschitz spaces}

Since $\Lip(M)$ is a dual Banach space, for $\Lip(M)$ to have the BCP it is necessary that the predual $\Free(M)$ is separable (see Lemma~\ref{lem:basic-implications}). The latter is equivalent to requiring that the metric space $M$ is separable.

\begin{proposition}\label{prop: BCP in Lipschitz}
   Let $M$ be a separable complete metric space. If $B_{\Free(M)}$ is the closed convex hull of its denting points, then $\bc(\Lip(M))=1$. 
\end{proposition}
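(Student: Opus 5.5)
The plan is to reduce the statement to Theorem~\ref{thm: BCP in dual}, using the isometric duality $\Lip(M)=\Free(M)^*$. Since $M$ is separable, $\Free(M)$ is separable: it is the closed span of $\{\delta(x)\colon x\in D\}$ for a countable dense set $D\subset M$, because $x\mapsto\delta(x)$ is an isometry into $\Free(M)$. It therefore suffices to show that $B_{\Free(M)}$ is an SCD set whenever it is the closed convex hull of its denting points. Theorem~\ref{thm: BCP in dual} then gives $\bc(\Lip(M))=1$.

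One route is to invoke the known fact that a bounded closed convex set in a separable space which is the closed convex hull of its denting points is SCD. This is \cite[Proposition~2.8]{AKMMS2010}, stated there for strongly exposed points, with the denting-point version also available. For completeness I would also sketch a direct argument, as follows.

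\begin{enumerate}
\item Let $\mathrm{dent}(B_{\Free(M)})$ be the set of denting points. It is a subset of a separable metric space, so it has a countable dense subset $\{d_k\}$.
\item For each $k$ and $j$, choose a slice $S_{k,j}$ of $B_{\Free(M)}$ containing $d_k$ with $\operatorname{diam}(S_{k,j})<1/j$.
\item I claim $\{S_{k,j}\}$ is determining. Take any $b_{k,j}\in S_{k,j}$. Then $\|b_{k,j}-d_k\|<1/j$, so every $d_k$ lies in $\overline{\{b_{k,j}\}}$.
\item The closed convex hull of $\{b_{k,j}\}$ therefore contains $\overline{\{d_k\}}\supseteq\mathrm{dent}(B_{\Free(M)})$. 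By hypothesis it is then all of $B_{\Free(M)}$, which is exactly the SCD condition of Definition~\ref{def: SCD set}.
\end{enumerate}

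The only delicate point is in step~4: density of $\{d_k\}$ gives every denting point as a limit of the $d_k$, and hence as a limit of the $b_{k,j}$. This is routine, so I do not expect a genuine obstacle. The main care needed is the bookkeeping that identifies $\Lip(M)$ with $\Free(M)^*$ isometrically, so that $\bc$ is computed for the dual norm exactly as Theorem~\ref{thm: BCP in dual} requires.
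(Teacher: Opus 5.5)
Your proof is correct and follows the same route as the paper: show that $B_{\Free(M)}$ is an SCD set and then apply Theorem~\ref{thm: BCP in dual} to the separable predual $\Free(M)$ of $\Lip(M)$. The only difference is in the SCD step: the paper cites \cite[Theorem~5.1]{LLMRZ2024}, which says that for Lipschitz-free spaces $B_{\Free(M)}$ is SCD if and only if it is the closed convex hull of its denting points, whereas you prove the needed implication directly by the standard density argument, which works for any bounded closed convex subset of a separable Banach space and uses nothing specific to $\Free(M)$.
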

\begin{proof}
    By \cite[Theorem~5.1]{LLMRZ2024}, $B_{\Free(M)}$ is an SCD set if and only if $B_{\Free(M)}$ is the closed convex hull of its denting points, hence we can apply Theorem~\ref{thm: BCP in dual} directly.
\end{proof}

Our next aim is to show that there exists a separable metric space such that $\Lip(M)$ has the BCP, but it fails the UBCP. A natural candidate is the unit circle $\mathbb{T}$, because 
$\mathcal{F}(\mathbb{T})$ fails the Radon--Nikod\'ym property, but its unit ball is an SCD set as it is the closed convex hull of its strongly exposed points \cite[Theorem~2.1]{CGLMRZ2021}. 

\begin{theorem}\label{thm:lip-torus-no-ubcp}
Let \(\mathbb T:=\{e^{it}:t\in\mathbb R\}\) be endowed with the chordal metric
\(d(e^{it},e^{is})=|e^{it}-e^{is}|\) and with base point \(1\).
Then \(\Lip(\mathbb T)\) has the BCP, but it fails the UBCP.
\end{theorem}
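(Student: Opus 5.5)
The BCP half follows from Theorem~\ref{thm: BCP in dual}. By \cite[Theorem~2.1]{CGLMRZ2021}, $B_{\Free(\mathbb T)}$ is the closed convex hull of its strongly exposed points. Hence it is an SCD set by \cite[Proposition~2.8]{AKMMS2010}. Since $\mathbb T$ is separable, $\Free(\mathbb T)$ is separable, so Theorem~\ref{thm: BCP in dual} gives $\bc(\Lip(\mathbb T))=1$. In particular $\Lip(\mathbb T)$ has the BCP by Lemma~\ref{lem:basic-implications}.

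For the failure of the UBCP I would argue by contradiction and localize. Suppose $\Lip(\mathbb T)$ has the UBCP. By Lemma~\ref{lem:ubcp-normal-form} there are $R>\alpha>0$ and $(g_n)\subset S_{\Lip(\mathbb T)}$ with $S_{\Lip(\mathbb T)}\subset\bigcup_n B(Rg_n,R-\alpha)$. The key point is that the radii $R-\alpha$ are uniformly bounded. On the other hand, a short arc $A_\varepsilon\subset\mathbb T$ containing the base point $1$, with the chordal metric, is $(1+\varepsilon)$-bi-Lipschitz to a segment with its usual metric. Therefore $\Lip(A_\varepsilon)$ is $(1+\varepsilon)$-isomorphic to $\Lip[0,\ell]\cong L_\infty$. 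The space $L_\infty$ is a dual space with the Daugavet property; being the Lipschitz space of a length space, it fails the $(-1)$-BCP by \cite[Proposition~5.2]{CLL2023}. So Proposition~\ref{prop: ball-coverings with big radius} applies to $\Lip(A_\varepsilon)$ with the chordal norm: every BCP-type covering of its sphere must use some radius larger than $1/\varepsilon$. Letting $\varepsilon\to0$ will contradict the bounded radius $R-\alpha$, provided the covering of $S_{\Lip(\mathbb T)}$ induces an admissible covering of $S_{\Lip(A_\varepsilon)}$ with radii at most $R-\alpha$.

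To transfer the covering I will use the restriction map $\rho\colon\Lip(\mathbb T)\to\Lip(A_\varepsilon)$ together with McShane extension. The map $\rho$ has norm $1$, and every $\varphi\in S_{\Lip(A_\varepsilon)}$ has a norm-one extension $f$. Then $f\in B(Rg_n,R-\alpha)$ for some $n$, and restricting gives $\varphi\in B(R\rho g_n,R-\alpha)$. Thus $S_{\Lip(A_\varepsilon)}$ is covered by the balls $B(R\rho g_n,R-\alpha)$ with radii bounded by $R$.

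The main obstacle is the off-origin condition: Proposition~\ref{prop: ball-coverings with big radius} needs centres with $\|R\rho g_n\|\ge R-\alpha$, and restriction can shrink the norm of $g_n$. My first attempt is to keep only the indices $n$ with $\|\rho g_n\|\ge 1-\alpha/R$ and show they still cover $S_{\Lip(A_\varepsilon)}$. For this, I would choose the extension $f$ of $\varphi$ so that its Lipschitz constant is only attained (up to small error) inside $A_\varepsilon$. For instance, extend $\varphi$ and then flatten $f$ outside a neighbourhood of $A_\varepsilon$, so that $f$ is almost constant on the complementary arc. Then I would show that if $\|f-Rg_n\|<R-\alpha$ then $g_n$ must be nearly norming on pairs of points inside $A_\varepsilon$, since a pair norming $f$ nearly with value $-1$ is forced to lie there. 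If this works, the surviving centres $R\rho g_n$ satisfy $\|R\rho g_n\|\ge R-\alpha$, at least after enlarging $R$ via the "every $R>r+1$" clause of Lemma~\ref{lem:ubcp-normal-form}. Then Proposition~\ref{prop: ball-coverings with big radius} forces $R-\alpha>1/\varepsilon$ for every $\varepsilon>0$, which is the desired contradiction.

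If this filtering argument fails, the fallback is to apply Proposition~\ref{prop: ball-coverings with big radius} to an intermediate weakened covering, using the radial slack $\alpha$. This is where I expect most of the technical work.
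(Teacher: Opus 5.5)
Your BCP half is correct and coincides with the paper's argument. The UBCP half, however, has a genuine gap at the off-origin step you flag, and the filtering you propose cannot work as stated.

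To use Proposition~\ref{prop: ball-coverings with big radius} as a black box, the filtered balls $B(R\rho g_n,R-\alpha)$ with $\|\rho g_n\|\ge 1-\alpha/R$ would have to cover \emph{all} of $S_{\Lip(A_\varepsilon)}$. Your only tool for locating the norm of $g_n$ inside the arc is the estimate $|g_n(m_{p,q})|\le (R-\alpha+|f(m_{p,q})|)/R$ for pairs $p,q$ not both in the arc. This is $<1$ only if the chosen extension $f$ has slopes $<\alpha$ on all such pairs, mixed pairs included. Enlarging $R$ does not change this threshold.

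That requirement fails for most $\varphi$. If $z_0,z_1$ are the endpoints of the arc, let $q\to z_0$ from outside the arc. Any such extension then forces $|\varphi(p)-\varphi(z_0)|\le\alpha\, d(p,z_0)$ for every $p$ in the arc, and in particular $|\varphi(z_0)-\varphi(z_1)|\le \alpha\, d(z_0,z_1)$. Now take $\varphi$ whose end-to-end slope is close to $1$, e.g.\ $\varphi$ affine in the arc parameter and normalized. Every norm-one extension of it has mixed pairs of slope close to $1$, so flattening on the complementary arc does not help. Your argument then gives no control on where $g_n$ attains its norm. Nothing excludes that $f$ is covered only by balls whose centres vanish on the arc, in which case $R\rho g_n=0$ and the transferred ball contains the origin.

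The missing idea is a localized version of the failure of the $(-1)$-BCP. You would have to open up the proof of Proposition~\ref{prop: ball-coverings with big radius}, where only the single point $u/|u|$ must be covered. You would then need to show that the witness $u$ from \cite[Proposition~2.3]{CLL2023} can be chosen among functions vanishing outside a subarc with small mixed slopes, at least relative to the centres whose balls meet this class. Nothing in the cited results provides this.

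The paper sidesteps the issue by working with this class from the start:
\begin{enumerate}
\item It takes the set $D$ of norm-one functions vanishing outside $A_a$ with mixed slopes $<\tau$.
\item It shows that every $g_n$ whose ball meets $D$ has its near-norming pairs inside $A_a$.
\item Via Lemma~\ref{lem:torus-slope}, it shrinks these pairs to short intervals $[u_n,v_n]$ of small total length with $|g_n(m_{e^{iu_n},e^{iv_n}})|\ge(1-\varepsilon)\kappa(a)$.
\item Lemma~\ref{lem:torus-bump} then supplies $f\in D$ constant on every $[u_n,v_n]$, so $\|rg_n-f\|\ge r(1-\varepsilon)\kappa(a)>r-\alpha$.
\end{enumerate}
Rather than making $f$ anti-aligned with the centres, as a $(-1)$-BCP witness would be, the paper makes $f$ flat where the centres are steep. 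That is easy to arrange on a set of small measure.
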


Theorem~\ref{thm: BCP in dual} immediately gives us that 
\(\bc(\Lip(\mathbb T))=1\), hence \(\Lip(\mathbb T)\) has the BCP. 
It remains to show that \(\Lip(\mathbb T)\) fails the UBCP.

In what follows, \(\mathbb T\) is endowed with the chordal metric and has base point \(1=e^{i0}\). For distinct \(p,q\in\mathbb T\), we write \(m_{p,q}\) for the corresponding molecule. Thus, for \(f\in\Lip(\mathbb T)\),
\[
f(m_{p,q})=\frac{f(p)-f(q)}{d(p,q)}.
\]
For \(0<a<1\), set
\[
    A_a:=\{e^{it}:0\le t\le a\}\subset\mathbb T,
    \qquad
    \kappa(a):=\frac{2\sin(a/2)}{a}.
\]
Then \(\kappa(a)\to1\) as \(a\downarrow0\), and for every \(0\le s<t\le a\),
\[
    \kappa(a)(t-s)
    \le
    d(e^{is},e^{it})
    \le
    t-s.
\]

\begin{lemma}\label{lem:torus-bump}
Let \(0<a<1\) and let \(\tau>0\). Then there exists
\(\rho=\rho(a,\tau)>0\) with the following property: for every measurable
set \(E\subset[0,a]\) with \(|E|<\rho\), there exists
\(f\in S_{\Lip(\mathbb T)}\) such that:
\begin{enumerate}
    \item[(i)] \(f=0\) on \(\mathbb T\setminus A_a\);
    \item[(ii)] the function \(t\mapsto f(e^{it})\) is constant on every connected component of \(E\);
    \item[(iii)] whenever one of the points \(p,q\) belongs to \(A_a\) and the other belongs to
    \(\mathbb T\setminus A_a\), one has \( |f(m_{p,q})|<\tau \).
\end{enumerate}
\end{lemma}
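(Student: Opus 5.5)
The plan is to build \(f\) on the arc \(A_a\) as a fine sawtooth in a reparametrised variable that stops moving on \(E\), to keep it below a linear envelope vanishing at both endpoints of the arc, and then to normalise. Set \(\kappa:=\kappa(a)\) and take \(\rho:=a/2\). The threshold \(\rho\) is only used to guarantee that \([0,a]\setminus E\) has positive measure.

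\emph{Step 1 (compressed variable).} Define
\[
u(t):=\bigl|[0,t]\setminus E\bigr|, \qquad t\in[0,a],
\]
and set \(L:=u(a)>a/2>0\). Then \(u\) is nondecreasing and \(1\)-Lipschitz. It is constant on every connected component of \(E\), since a component is an interval, or a point, contained in \(E\). Moreover \(u(t)\le t\) and \(L-u(t)\le a-t\).

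\emph{Step 2 (sawtooth in \(u\)).} Fix \(\eta\in(0,1)\), to be chosen later. Choose a \(1\)-Lipschitz function \(S\colon[0,L]\to\mathbb R\) with the following properties:
\begin{itemize}
\item \(S(0)=S(L)=0\);
\item \(|S'|=1\) almost everywhere;
\item \(|S(v)|\le \eta\min(v,L-v)\) for all \(v\in[0,L]\).
\end{itemize}
Such an \(S\) is a zigzag whose teeth shrink geometrically towards both endpoints. Concretely, on each dyadic-type block \([2^{-k-1}L',2^{-k}L']\) near \(0\), and symmetrically near \(L\), we use triangular teeth of height at most \(\eta 2^{-k-2}L'\).

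Now define \(\varphi(t):=\kappa\,S(u(t))\) for \(t\in[0,a]\). Put \(g(e^{it}):=\varphi(t)\) on \(A_a\) and \(g:=0\) on \(\mathbb T\setminus A_a\); note \(g(1)=0\). Then (i) and (ii) hold for \(g\). Combining \(|S(v)|\le\eta\min(v,L-v)\) with Step 1 gives the envelope
\[
|\varphi(t)|\le \kappa\eta\min(t,a-t).
\]

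\emph{Step 3 (Lipschitz bounds).} For the upper bound, take \(p,q\in A_a\). Then \(|\varphi(t)-\varphi(s)|\le\kappa|t-s|\le d(e^{is},e^{it})\).

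For mixed pairs \(p=e^{it}\in A_a\), \(q\notin A_a\), I would use the geometric fact that, since \(a<1<\pi\),
\[
d(p,q)\ge \min\bigl(d(p,1),d(p,e^{ia})\bigr)\ge\kappa\min(t,a-t).
\]
This gives \(|g(m_{p,q})|\le\eta\le 1\). Pairs with both points outside \(A_a\) contribute \(0\), so \(N:=\|g\|\le1\).

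For the lower bound on \(N\), pick a density point \(t_0\) of \([0,a]\setminus E\) at which \(u'=1\) and \(|S'(u(t_0))|=1\); such a point exists because \([0,a]\setminus E\) has positive measure. Difference quotients there tend to \(\kappa\) relative to arc length, and chord length over arc length tends to \(1\), so \(N\ge\kappa\).

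\emph{Step 4 (normalisation).} Let \(f:=g/N\in S_{\Lip(\mathbb T)}\). Properties (i) and (ii) are preserved by scaling. For mixed pairs, \(|f(m_{p,q})|\le\eta/N\le\eta/\kappa\). So choosing \(\eta<\tau\kappa\) gives (iii).

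I expect the main obstacle to be the two facts used in Step 3. The first is the chordal estimate \(d(p,q)\ge\kappa\min(t,a-t)\) for \(q\) off the arc. It should follow because the nearest point of \(\mathbb T\setminus A_a\) to \(p\) in arc length is an endpoint of the arc, and chord length is monotone in arc length below \(\pi\). The second is a careful verification that \(S\) exists with the stated envelope while keeping \(|S'|=1\) almost everywhere. The remaining steps are routine.
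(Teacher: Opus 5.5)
Your argument is correct, but it takes a different route from the paper's.

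The paper does not spread the construction over the whole arc. It picks a short interval $I=[b,b+\ell]$ deep inside $(0,a)$ with $\ell<2\tau\kappa(a)\dist(I,\{0,a\})$. On it, it takes a single tent $\min\{S(t),|I\setminus E|-S(t)\}$ in the compressed variable $S(t)=|[b,t]\setminus E|$. It uses $\rho<\ell$ only to make this tent nonzero. The key economy is that the mixed-slope bound is checked \emph{after} normalisation. Since $f=g/\|g\|$ has Lipschitz constant $1$ and vanishes at $e^{ib}$ and $e^{i(b+\ell)}$, automatically $|f|\le \ell/2$. Every point off $A_a$ is at chordal distance at least $\kappa(a)\dist(I,\{0,a\})$ from the points of $I$, so the mixed slopes are below $\tau$. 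Hence the paper needs neither a lower bound on $\|g\|$ nor a sawtooth.

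Your approach instead keeps $f$ below the envelope $\eta\min(t,a-t)$ on all of $A_a$. You therefore have to control the normalising constant from below, uniformly in $E$. That is what forces the zigzag with $|S'|=1$ and the density-point argument giving $N\ge\kappa$. In exchange you get a slightly stronger statement: $\rho$ can be chosen independently of $\tau$, since any $E$ with $|E|<a$ works. This gain is irrelevant for the application in Theorem~\ref{thm:lip-torus-no-ubcp}, where only the existence of some $\rho$ is used.

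One detail in Step 3 deserves to be made explicit. You need a point $t_0\notin E$ with $u'(t_0)=1$ and $u(t_0)$ not a breakpoint of $S$. This point exists for the following reason:
\begin{itemize}
\item your piecewise linear $S$ has only countably many breakpoints $z$;
\item each fibre $u^{-1}(z)$ is an interval $[s,t]$ with $|[s,t]\setminus E|=u(t)-u(s)=0$;
\item so the preimage of the breakpoint set, minus $E$, is null, while $u'=1$ a.e.\ on the positive-measure set $[0,a]\setminus E$.
\end{itemize}
Near such a $t_0$, $S$ is linear with slope $\pm1$ and $u$ is continuous. Hence $|\varphi(t)-\varphi(t_0)|=\kappa\,|u(t)-u(t_0)|$ for $t$ close to $t_0$, and $N\ge\kappa$ follows.
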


\begin{proof}
Choose a closed interval \(I=[b,b+\ell]\subset(0,a)\) so short that
\[
    \ell<2\tau \kappa(a)\dist(I,\{0,a\}).
\]
Choose \(0<\rho<\ell\), and let \(E\subset[0,a]\) be measurable with
\(|E|<\rho\). Put
\[
    S(t):=|[b,t]\setminus E|,\qquad t\in I.
\]
Define \(g:\mathbb T\to\mathbb R\) by
\[
g(z):=
\begin{cases}
    \min\{S(t),|I\setminus E|-S(t)\},
        & \text{if } z=e^{it}\text{ for some }t\in I,\\
    0,
        & \text{if } z\in \mathbb T\setminus\{e^{it}:t\in I\}.
\end{cases}
\]
This is well-defined since \(I\subset(0,a)\) and \(a<1<2\pi\). 

Let \(d_{\mathrm{arc}}\) denote the intrinsic arc metric on \(\mathbb T\).
Since
\[
 |S(t)-S(s)|\le |t-s| \qquad (s,t\in I),
\]
and the map
\[
u\longmapsto \min\{u,|I\setminus E|-u\}
\]
is \(1\)-Lipschitz, the function \(t\mapsto g(e^{it})\) is
\(1\)-Lipschitz on \(I\). Moreover,
\[
g(e^{ib})=g(e^{i(b+\ell)})=0,
\]
so its zero extension is \(1\)-Lipschitz with respect to
\(d_{\mathrm{arc}}\). Since
\[
d_{\mathrm{arc}}(z,w)\le \frac{\pi}{2}|z-w|
\qquad (z,w\in\mathbb T),
\]
and \(g(1)=0\), it follows that \(g\in\operatorname{Lip}_0(\mathbb T)\). Thus \(g=0\) on \(\mathbb T\setminus A_a\), and \(g\ne0\).
Since \(S\) is constant on every connected subinterval of \(E\cap I\), the function
\(t\mapsto g(e^{it})\) is constant on every connected component of \(E\).

Put \(f:=g/\|g\|_L\). Then \(f\in S_{\Lip(\mathbb T)}\), \(f=0\) on
\(\mathbb T\setminus A_a\), and \(t\mapsto f(e^{it})\) is constant on every connected
component of \(E\). Moreover, for \(t\in I\),
\[
    |f(e^{it})|
    \le
    \min\{d(e^{it},e^{ib}),d(e^{it},e^{i(b+\ell)})\}
    \le
    \frac{\ell}{2}.
\]

It remains to check the boundary estimate. Let \(p=e^{ix}\in A_a\) and
\(q\in\mathbb T\setminus A_a\). If \(x\notin I\), then \(f(p)=f(q)=0\). If \(x\in I\), then
\[
    d(p,q)\ge 2\sin\frac{\dist(I,\{0,a\})}{2}
    \ge \kappa(a)\dist(I,\{0,a\}),
\]
and hence
\[
    |f(m_{p,q})|
    =
    \frac{|f(p)|}{d(p,q)}
    \le
    \frac{\ell}{2\kappa(a)\dist(I,\{0,a\})}
    <
    \tau.
\]
The case \(q\in A_a\) and \(p\in\mathbb T\setminus A_a\) is symmetric.
\end{proof}

\begin{lemma}\label{lem:torus-slope}
Let \(a,\varepsilon\in(0,1)\), and let \(h\in S_{\Lip(\mathbb T)}\). Assume that for some interval
\([s,t]\subset[0,a]\)
one has
\[
    |h(m_{e^{is},e^{it}})|>1-\varepsilon.
\]
Then, for every \(\delta>0\), there exists a subinterval
\([u,v]\subset[s,t]\)
with \(0<v-u<\delta\) such that
\[
    |h(m_{e^{iu},e^{iv}})|
    \ge
    (1-\varepsilon)\kappa(a).
\]
\end{lemma}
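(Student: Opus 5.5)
We pass to the parametrized function \(\varphi(r):=h(e^{ir})\) on \([0,a]\) and compare chordal distances with parameter lengths using \(\kappa(a)\). Since \(h\) is \(1\)-Lipschitz for the chordal metric and \(d(e^{ir},e^{ir'})\le |r-r'|\), the function \(\varphi\) is \(1\)-Lipschitz on \([0,a]\). Hence it is absolutely continuous, and
\[
\varphi(t)-\varphi(s)=\int_s^t\varphi'(r)\,dr .
\]
Replacing \(h\) by \(-h\) if necessary, we may assume \(h(m_{e^{is},e^{it}})>1-\varepsilon\). (The orientation of the molecule only changes a sign, which the absolute value absorbs.) Using \(d(e^{is},e^{it})\ge\kappa(a)(t-s)\), the hypothesis gives
\[
\frac{\varphi(t)-\varphi(s)}{t-s}>(1-\varepsilon)\kappa(a)\cdot\frac{d(e^{is},e^{it})}{\kappa(a)(t-s)}\ \ge\ (1-\varepsilon)\kappa(a).
\]
We also get the cleaner bound
\[
\varphi(t)-\varphi(s)>(1-\varepsilon)d(e^{is},e^{it})\ge (1-\varepsilon)\kappa(a)(t-s).
\]
So the average slope of \(\varphi\) over \([s,t]\) strictly exceeds \(c:=(1-\varepsilon)\kappa(a)\).

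Next we localize to a short interval by an averaging (pigeonhole) argument. Fix \(\delta>0\) and choose \(N\in\mathbb N\) with \((t-s)/N<\delta\). Split \([s,t]\) into \(N\) consecutive intervals \([r_{k-1},r_k]\) of equal length. Since
\[
\varphi(t)-\varphi(s)=\sum_{k=1}^N\bigl(\varphi(r_k)-\varphi(r_{k-1})\bigr)>c\,(t-s)=\sum_{k=1}^N c\,(r_k-r_{k-1}),
\]
at least one index \(k\) satisfies
\[
\varphi(r_k)-\varphi(r_{k-1})>c\,(r_k-r_{k-1}).
\]
Put \([u,v]:=[r_{k-1},r_k]\), so that \(0<v-u<\delta\).

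Finally we convert back to the molecule. Because \(d(e^{iu},e^{iv})\le v-u\), we get
\[
h(m_{e^{iv},e^{iu}})=\frac{\varphi(v)-\varphi(u)}{d(e^{iu},e^{iv})}\ge\frac{\varphi(v)-\varphi(u)}{v-u}>c=(1-\varepsilon)\kappa(a).
\]
Taking absolute values gives the claim; the inequality is in fact strict, which is stronger than required. The only point needing care is the direction of the two metric comparisons. The lower bound \(d\ge\kappa(a)\cdot\)(parameter length) is used on the long interval, and the upper bound \(d\le\) (parameter length) on the short one. This is exactly why one factor \(\kappa(a)\) is lost.
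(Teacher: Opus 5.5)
Your proof is correct and is essentially the paper's argument, run directly rather than by contradiction. The paper assumes every subinterval of length $<\delta_0$ has slope below $(1-\varepsilon)\kappa(a)$ and sums over a fine partition to contradict the hypothesis, using the same two comparisons you use ($d\le$ parameter length on the short pieces, $d\ge\kappa(a)\cdot$ parameter length on $[s,t]$); your pigeonhole over an equal partition is just the contrapositive of that.

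One cosmetic slip: with $m_{p,q}=(\delta(p)-\delta(q))/d(p,q)$, the normalization you actually use, $\varphi(t)-\varphi(s)>(1-\varepsilon)d(e^{is},e^{it})$, corresponds to $h(m_{e^{it},e^{is}})>1-\varepsilon$, not to $h(m_{e^{is},e^{it}})>1-\varepsilon$. This is harmless, since you may replace $h$ by $-h$.
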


\begin{proof}
Assume the conclusion fails. Then there exists \(\delta_0>0\) such that, whenever
\(u,v\in[s,t]\) and \(0<v-u<\delta_0\),
\[
    |h(m_{e^{iu},e^{iv}})|
    <
    (1-\varepsilon)\kappa(a).
\]
Equivalently,
\[
    |h(e^{iv})-h(e^{iu})|
    <
    (1-\varepsilon)\kappa(a)d(e^{iu},e^{iv})
    \le
    (1-\varepsilon)\kappa(a)(v-u).
\]
Choose a partition \(s=t_0<t_1<\cdots<t_N=t\) with
\(t_i-t_{i-1}<\delta_0\) for all \(i\). Then
\[
\begin{aligned}
    |h(e^{it})-h(e^{is})|
    &\le
    \sum_{i=1}^N
    |h(e^{it_i})-h(e^{it_{i-1}})|  \\
    &<
    (1-\varepsilon)\kappa(a)(t-s).
\end{aligned}
\]
So we have
\[
    |h(m_{e^{is},e^{it}})|
    <
    \frac{(1-\varepsilon)\kappa(a)(t-s)}
         {d(e^{is},e^{it})}
    \le
    1-\varepsilon,
\]
which contradicts the assumption.
\end{proof}

Finally, we are ready to prove the main result in this subsection.

\begin{proof}[Proof of Theorem~\ref{thm:lip-torus-no-ubcp}]
Let \(X:=\Lip(\mathbb T)\). Suppose, towards a contradiction, that \(X\) has the UBCP. By Lemma~\ref{lem:ubcp-normal-form}, we may assume that
there exist \(0<\alpha<r\) and a sequence \((g_n)\subset S_X\) such that
\[
    S_X
    \subset
    \bigcup_{n=1}^{\infty}B(rg_n,r-\alpha).
\]

Choose \(0<\varepsilon<\alpha/r\), \(0<\tau<\alpha-r\varepsilon\), and \(0<a<1\) such that
\[
    \kappa(a)>\frac{1-\alpha/r}{1-\varepsilon}.
\]
Define
\[
D:=\bigl\{f\in S_X:\; f=0\text{ on }\mathbb T\setminus A_a,
 |f(m_{p,q})|<\tau
\text{ whenever exactly one of }p,q\text{ belongs to }A_a
\bigr\}.
\]
The set \(D\) is non-empty by Lemma~\ref{lem:torus-bump} applied with \(E=\varnothing\). Let
\[
    J:=\{n\in\mathbb N:
    B(rg_n,r-\alpha)\cap D\ne\varnothing\}.
\]
If \(J=\varnothing\), then any element of \(D\) is not covered, a contradiction. Hence assume
\(J\ne\varnothing\).

We first show that, for \(n\in J\), every pair producing a slope larger
than \(1-\varepsilon\) must lie inside the arc \(A_a\).
For each \(n\in J\), choose \(F_n\in D\) such that
\[
    \|rg_n-F_n\|_L<r-\alpha.
\]
Fix \(n\in J\). If exactly one of the points \(p,q\) lies in \(A_a\), then
\[
\begin{aligned}
    |g_n(m_{p,q})|
    &\le
    \frac1r\left(|(rg_n-F_n)(m_{p,q})|+|F_n(m_{p,q})|\right)\\
    &<
    \frac1r(r-\alpha+\tau)
    <
    1-\varepsilon.
\end{aligned}
\]
If both \(p\) and \(q\) lie outside \(A_a\), then \(F_n(p)=F_n(q)=0\), and hence again
\[
    |g_n(m_{p,q})|
    =
    \frac1r|(rg_n-F_n)(m_{p,q})|
    <
    \frac1r(r-\alpha)
    <
    1-\varepsilon.
\]
Since \(\|g_n\|_L=1\), choose \(p_n,q_n\in\mathbb T\), \(p_n\ne q_n\), such that
\[
    |g_n(m_{p_n,q_n})|>1-\varepsilon.
\]
Then both \(p_n\) and \(q_n\) belong to \(A_a\). Hence, after interchanging them if necessary, we may write
\(p_n=e^{ix_n}\), \(q_n=e^{iy_n}\), where \(0\le x_n<y_n\le a\).

Let \(\rho=\rho(a,\tau)>0\) be given by Lemma~\ref{lem:torus-bump}. Choose positive numbers
\((\delta_n)_{n\in J}\) such that \(\sum_{n\in J}\delta_n<\rho\). By
Lemma~\ref{lem:torus-slope}, for each \(n\in J\) there exist
\(u_n,v_n\in[x_n,y_n]\) such that \(0<v_n-u_n<\delta_n\) and
\[
    |g_n(m_{e^{iu_n},e^{iv_n}})|
    \ge
    (1-\varepsilon)\kappa(a).
\]

Put
\[
    E:=\bigcup_{n\in J}[u_n,v_n]\subset[0,a].
\]
Then \(E\) is measurable and
\[
    |E|\le\sum_{n\in J}(v_n-u_n)<\sum_{n\in J}\delta_n<\rho.
\]
Apply Lemma~\ref{lem:torus-bump} to this set \(E\). We obtain \(f\in S_X\) such that \(f\in D\) and
\(t\mapsto f(e^{it})\) is constant on every connected component of \(E\).
For every \(n\in J\), the interval \([u_n,v_n]\) is contained in a connected component of \(E\). Therefore
\(f(e^{iu_n})=f(e^{iv_n})\), so \(f(m_{e^{iu_n},e^{iv_n}})=0\). Consequently,
\[
\begin{aligned}
    \|rg_n-f\|_L
    &\ge
    |(rg_n-f)(m_{e^{iu_n},e^{iv_n}})|\\
    &=
    r|g_n(m_{e^{iu_n},e^{iv_n}})|\\
    &\ge
    r(1-\varepsilon)\kappa(a)
    >
    r-\alpha.
\end{aligned}
\]
Thus \(f\notin B(rg_n,r-\alpha)\) for every \(n\in J\).

If \(n\notin J\), then \(B(rg_n,r-\alpha)\cap D=\varnothing\). Since \(f\in D\), we also have
\(f\notin B(rg_n,r-\alpha)\) for every \(n\notin J\). Consequently \(f\in S_X\) is not contained
in any ball from the covering, a contradiction.
\end{proof}

We do not know whether \(\Lip(\mathbb T)\) also fails the SBCP. This leads us to the following question. 
\begin{question}
Does \(\Lip(\mathbb T)\) have the SBCP?
\end{question}

\subsection{UBCP in Lipschitz spaces}
Finally, we will give a class of separable metric spaces $M$ for which $\Lip(M)$ will have the UBCP. This class will include separable ultrametric spaces and Hölder metric spaces. For this, we first introduce some notation.

Let $M$ be a metric space. Given $x,y,z\in M$, the \emph{Gromov product} (see, e.g. \cite{BH1999}) of $x$ and $y$ at $z$ is defined as
\[
(x,y)_z:=\frac{1}{2}(d(x,z)+d(z,y)-d(x,y))\geq 0.
\]

\begin{definition}[{\cite[Definition~3.5]{CCGLMRZ2019}, \cite[Definition~3.1]{CM2019}}]\label{def: Gromov concave}
    Let $M$ be a pointed metric space.
    \begin{itemize}
        \item[(a)] The metric space $M$ is said to be \emph{Gromov concave} if for every $x,y\in M$, $x\neq y$, there is $\varepsilon_{x,y}>0$ such that
        \[
(x,y)_z>\varepsilon_{x,y}\min\{d(x,z), d(y,z)\}
               \]
               for every $z\in M\setminus\{x,y\}$.
        \item[(b)] The metric space $M$ is said to be \emph{uniformly Gromov concave} if there is $\varepsilon_{0}>0$ such that
        \[
(x,y)_z>\varepsilon_{0}\min\{d(x,z), d(y,z)\}
               \]
               for every distinct $x,y,z\in M$.
    \end{itemize}
\end{definition}
If \(M\) is separable, complete, and Gromov concave, then
\(\operatorname{Lip}_0(M)\) has the BCP, because every molecule is a
strongly exposed point of \(B_{\mathcal F(M)}\)
\cite[Theorem 5.4]{GLPRZ2018}, hence \(B_{\mathcal F(M)}\) is an SCD set and we can apply Theorem~\ref{thm: BCP in dual}. Examples of uniformly Gromov concave metric spaces include, e.g. ultrametric spaces, because in this case 
\[
(x,y)_z\geq\frac12\min\{d(x,z),d(y,z)\}\qquad\text{for all distinct $x,y,z\in M$},
\]
and Hölder metric spaces \cite[Proposition~3.8]{CM2019}. Note that the metric space $\mathbb{T}$ with the chordal metric is an example of a Gromov concave metric space, which is not uniformly Gromov concave \cite[Theorem~2.1]{CGLMRZ2021}. We will now prove that uniformly Gromov concave metric spaces are sufficient for the UBCP of $\Lip(M)$. 

\begin{theorem}\label{thm: UBCP in Lip}
    If $M$ is a separable complete uniformly Gromov concave metric space, then $\Lip(M)$ has the UBCP and $\bc(\Lip(M))=1$.
\end{theorem}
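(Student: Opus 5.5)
The plan is to reduce the statement to Theorem~\ref{thm: UBCP in dual}, using that $\Lip(M)=\Free(M)^*$ isometrically. Since $M$ is separable, $\Free(M)$ is separable. We also know that $B_{\Free(M)}=\cconv(\Mol M)$ with $\Mol M\subset S_{\Free(M)}$. It therefore suffices to show that $\Mol M$ is a set of uniformly strongly exposed points of $B_{\Free(M)}$. Once this is established, Theorem~\ref{thm: UBCP in dual} gives at once that $\Lip(M)$ has the UBCP and $\bc(\Lip(M))=1$. Alternatively, Lemma~\ref{lem:countable-use-selection} extracts a countable norming subsequence of molecules and we apply Theorem~\ref{thm:nu-norming-UBCP-real} with $\nu=1$.

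The key input is the characterization from \cite{CCGLMRZ2019} already mentioned in the paper. For complete $M$, uniform Gromov concavity is equivalent to $\Mol M$ being uniformly strongly exposed. I would invoke this directly. If a self-contained argument is preferred, I would construct the exposing functionals explicitly, in the style of \cite{GLPRZ2018,CCGLMRZ2019}. For the molecule $m_{xy}$ one takes a peaking Lipschitz function of the form
\[
f_{xy}(z)=\frac{d(x,y)}{2}\cdot\frac{d(z,y)-d(z,x)}{d(z,y)+d(z,x)}\cdot c,
\]
or a McShane-type variant, normalized so that $\|f_{xy}\|_L=1=f_{xy}(m_{xy})$. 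The uniform Gromov concavity constant $\varepsilon_0$ is used to show a uniform gap: if a molecule $m_{uv}$ satisfies $f_{xy}(m_{uv})>1-\delta$, then $u$ is close to $x$ and $v$ is close to $y$, relative to $d(x,y)$. Here $\delta$ depends only on $\varepsilon_0$ and the prescribed $\varepsilon$. Closeness of the endpoints then gives $\|m_{uv}-m_{xy}\|<\varepsilon$.

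From molecules one passes to general points of $B_{\Free(M)}$ by the standard convex-combination argument. Take $w\in B_{\Free(M)}$ with $f_{xy}(w)>1-\delta^2$, and approximate it by $\sum c_k m_{u_kv_k}$ with $\sum c_k\le 1+\delta^2$. Most of the mass (all but $O(\delta)$) lies on molecules with $f_{xy}(m_{u_kv_k})>1-\delta$, exactly as in the projective-tensor proposition above. These molecules are $\varepsilon$-close to $m_{xy}$, so $\|w-m_{xy}\|\lesssim\varepsilon+\delta$.

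The main obstacle is the uniform molecule-to-molecule estimate, in particular its uniformity in $\delta$ across all pairs $x,y$. This is exactly where non-uniform Gromov concavity fails, as the circle example in Theorem~\ref{thm:lip-torus-no-ubcp} shows. I would first try simply to cite \cite[Theorem~3.6 or so]{CCGLMRZ2019} for this equivalence. Only if its formulation differs, for example by stating uniform strong exposure for molecules rather than for all points of the ball, would I supply the convex-combination step above.
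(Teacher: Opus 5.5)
Your proposal is correct and follows the paper's proof. The paper likewise takes as known that $\Mol{M}$ is a set of uniformly strongly exposed points of $B_{\Free(M)}$ when $M$ is uniformly Gromov concave; it cites \cite[Remark~3.2]{CM2019} for this rather than \cite{CCGLMRZ2019}. It then applies Theorem~\ref{thm: UBCP in dual} via $B_{\Free(M)}=\cconv(\Mol{M})$. Your optional step passing from molecules to all of $B_{\Free(M)}$ by convex combinations is sound, but it is unnecessary once that citation is used.
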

\begin{proof}
    Since $M$ is a uniformly Gromov concave metric space, the set $\Mol{M}$ is a set of uniformly strongly exposed points of $B_{\mathcal{F}(M)}$ \cite[Remark~3.2]{CM2019}. Therefore, $B_{\mathcal{F}(M)}$ is the closed convex hull of a uniformly strongly exposed set, and we can apply Theorem~\ref{thm: UBCP in dual}.
    \end{proof}

\section*{Acknowledgements}
The authors are grateful to Miguel Mart\'in, Yo\"el Perreau, and Abraham Rueda Zoca for answering some inquiries on the topic of the paper.

\bibliographystyle{amsplain}
\bibliography{references}

\providecommand{\bysame}{\leavevmode\hbox to3em{\hrulefill}\thinspace}
\providecommand{\MR}{\relax\ifhmode\unskip\space\fi MR }
\providecommand{\MRhref}[2]{%
  \href{http://www.ams.org/mathscinet-getitem?mr=#1}{#2}
}
\providecommand{\href}[2]{#2}
\begin{thebibliography}{10}

\bibitem{AK2006}
Fernando Albiac and Nigel~J. Kalton, \emph{Topics in {Banach} space theory}, Grad. Texts Math., vol. 233, Berlin: Springer, 2006.

\bibitem{AGPP2022}
Ram{\'o}n~J. Aliaga, Chris Gartland, Colin Petitjean, and Anton{\'{\i}}n Proch{\'a}zka, \emph{Purely 1-unrectifiable metric spaces and locally flat {Lipschitz} functions}, Trans. Am. Math. Soc. \textbf{375} (2022), no.~5, 3529--3567.

\bibitem{AKMMS2010}
Antonio Avil{\'e}s, Vladimir Kadets, Miguel Mart{\'{\i}}n, Javier Mer{\'{\i}}, and Varvara Shepelska, \emph{Slicely countably determined {Banach} spaces}, Trans. Am. Math. Soc. \textbf{362} (2010), no.~9, 4871--4900.

\bibitem{BLSpreprint}
Qiyao Bao, Rui Liu, and Jie Shen, \emph{Approximation properties and quantitative estimation for uniform ball-covering property of operator spaces}, Preprint, {arXiv}:2507.02261 [math.{FA}] (2025).

\bibitem{BLS2025}
Qiyao Bao, Rui Liu, and Jie Shen, \emph{The ball-covering property of non-commutative spaces of operators on {Banach} spaces}, Banach J. Math. Anal. \textbf{19} (2025), no.~3, 20, Id/No 34.

\bibitem{BH1999}
Martin~R. Bridson and Andr{\'e} Haefliger, \emph{Metric spaces of non-positive curvature}, Grundlehren Math. Wiss., vol. 319, Berlin: Springer, 1999.

\bibitem{CCGLMRZ2019}
Bernardo Cascales, Rafael Chiclana, Luis~C. Garc{\'{\i}}a-Lirola, Miguel Mart{\'{\i}}n, and Abraham Rueda~Zoca, \emph{On strongly norm attaining {Lipschitz} maps}, J. Funct. Anal. \textbf{277} (2019), no.~6, 1677--1717.

\bibitem{Cheng2006}
Lixin Cheng, \emph{Ball-covering property of {Banach} spaces}, Isr. J. Math. \textbf{156} (2006), 111--123.

\bibitem{Cheng2011}
\bysame, \emph{Erratum to: ``{Ball}-covering property of {Banach} spaces''}, Isr. J. Math. \textbf{184} (2011), 505--507.

\bibitem{CCL2008}
Lixin Cheng, Qingjin Cheng, and Xiaoyan Liu, \emph{Ball-covering property of {Banach} spaces that is not preserved under linear isomorphisms}, Sci. China, Ser. A \textbf{51} (2008), no.~1, 143--147.

\bibitem{CKWZ2010}
Lixin Cheng, Vladimir Kadets, Bo~Wang, and Wen Zhang, \emph{A note on ball-covering property of {Banach} spaces}, J. Math. Anal. Appl. \textbf{371} (2010), no.~1, 249--253.

\bibitem{CGLMRZ2021}
Rafael Chiclana, Luis~C. Garc{\'{\i}}a-Lirola, Miguel Mart{\'{\i}}n, and Abraham Rueda~Zoca, \emph{Examples and applications of the density of strongly norm attaining {Lipschitz} maps}, Rev. Mat. Iberoam. \textbf{37} (2021), no.~5, 1917--1951.

\bibitem{CM2019}
Rafael Chiclana and Miguel Mart{\'{\i}}n, \emph{The {Bishop}-{Phelps}-{Bollob{\'a}s} property for {Lipschitz} maps}, Nonlinear Anal., Theory Methods Appl., Ser. A, Theory Methods \textbf{188} (2019), 158--178.

\bibitem{CLL2023}
Stefano Ciaci, Johann Langemets, and Aleksei Lissitsin, \emph{A characterization of {Banach} spaces containing {{\(\ell_1(\kappa)\)}} via ball-covering properties}, Isr. J. Math. \textbf{253} (2023), no.~1, 359--379.

\bibitem{FHHMZ2011}
Mari{\'a}n Fabian, Petr Habala, Petr H{\'a}jek, Vicente Montesinos, and V{\'a}clav Zizler, \emph{Banach space theory. {The} basis for linear and nonlinear analysis}, CMS Books Math./Ouvrages Math. SMC, Berlin: Springer, 2011.

\bibitem{FZ2009}
Vladimir~P. Fonf and Clemente Zanco, \emph{Covering spheres of {Banach} spaces by balls}, Math. Ann. \textbf{344} (2009), no.~4, 939--945.

\bibitem{GLPRZ2018}
Luis Garc{\'{\i}}a-Lirola, Anton{\'{\i}}n Proch{\'a}zka, and Abraham Rueda~Zoca, \emph{A characterisation of the {Daugavet} property in spaces of {Lipschitz} functions}, J. Math. Anal. Appl. \textbf{464} (2018), no.~1, 473--492.

\bibitem{GLM2019}
A.~J. Guirao, A.~Lissitsin, and V.~Montesinos, \emph{Some remarks on the ball-covering property}, J. Math. Anal. Appl. \textbf{479} (2019), no.~1, 608--620.

\bibitem{HKL2024}
Jinghao Huang, Karimbergen Kudaybergenov, and Rui Liu, \emph{The ball-covering property of noncommutative symmetric spaces}, Stud. Math. \textbf{277} (2024), no.~2, 169--190.

\bibitem{KMMW2013}
Vladimir Kadets, Miguel Mart{\'{\i}}n, Javier Mer{\'{\i}}, and Dirk Werner, \emph{Lushness, numerical index 1 and the {Daugavet} property in rearrangement invariant spaces}, Can. J. Math. \textbf{65} (2013), no.~2, 331--348.

\bibitem{LLMRZ2024}
Johann Langemets, Marcus L{\~o}o, Miguel Mart{\'{\i}}n, and Abraham Rueda~Zoca, \emph{Slicely countably determined points in {Banach} spaces}, J. Math. Anal. Appl. \textbf{537} (2024), no.~1, 36, Id/No 128248.

\bibitem{L1963}
Joram Lindenstrauss, \emph{On operators which attain their norm}, Isr. J. Math. \textbf{1} (1963), 139--148.

\bibitem{LLLZ2022}
Minzeng Liu, Rui Liu, Jimeng Lu, and Bentuo Zheng, \emph{Ball covering property from commutative function spaces to non-commutative spaces of operators}, J. Funct. Anal. \textbf{283} (2022), no.~1, 15, Id/No 109502.

\bibitem{LP2026}
Marcus L{\~o}o and Yo{\"e}l Perreau, \emph{A {Banach} space with an unconditional basis which is not slicely countably determined}, Preprint, {arXiv}:2603.12947 [math.{FA}] (2026).

\bibitem{LZ2020}
Zhenghua Luo and Bentuo Zheng, \emph{Stability of the ball-covering property}, Stud. Math. \textbf{250} (2020), no.~1, 19--34.

\bibitem{LuoZheng2021}
\bysame, \emph{The strong and uniform ball-covering properties}, J. Math. Anal. Appl. \textbf{499} (2021), no.~1, 15, Id/No 125034.

\bibitem{PS2000}
Rafael Pay{\'a} and Yousef Saleh, \emph{Norm attaining operators from {{\(L_1 (\mu)\)}} into {{\(L_{\infty} (\nu)\)}}}, Arch. Math. \textbf{75} (2000), no.~5, 380--388.

\bibitem{P1974}
R.~R. Phelps, \emph{Dentability and extreme points in {Banach} spaces}, J. Funct. Anal. \textbf{17} (1974), 78--90.

\bibitem{RS1986}
W.~M. Ruess and C.~P. Stegall, \emph{Exposed and denting points in duals of operator spaces}, Isr. J. Math. \textbf{53} (1986), 163--190.

\bibitem{R2002}
Raymond~A. Ryan, \emph{Introduction to tensor products of {Banach} spaces}, Springer Monogr. Math., London: Springer, 2002.

\bibitem{Sain2025}
Debmalya Sain, \emph{On norm derivatives and the ball-covering property of {Banach} spaces}, J. Math. Anal. Appl. \textbf{541} (2025), no.~1, 9, Id/No 128738.

\bibitem{S1983}
Walter Schachermayer, \emph{Norm attaining operators and renormings of {Banach} spaces}, Isr. J. Math. \textbf{44} (1983), 201--212.

\bibitem{W2018}
Nik Weaver, \emph{Lipschitz algebras}, 2nd ed., Hackensack, NJ: World Scientific, 2018.

\end{thebibliography}

\end{document}